\newtheorem{Thm}{Theorem}{\bf}{\it}
\newtheorem{Lem}{Lemma}{\bf}{\it}
\newtheorem{Cor}{Corollary}{\bf}{\it}
\newtheorem{Conj}{Conjecture}{\bf}{\it}
\theoremstyle{definition}
\newtheorem*{Ack}{Acknowledgement}
\theoremstyle{remark}
\newtheorem*{Rem}{Remark}
\def\Z{\mathbb Z}
\def\C{\mathbb C}
\def\T{\mathbb T}
\def\R{\mathbb R}
\def\A{\mathbb A}
\def\P{\mathcal P}
\def\M{\mathfrak M}
\def\m{\mathfrak m}
\def\od{\overline{d}}
\def\W{\mathcal W}
\def\1{{\bf 1}}
\def\pmod #1{\ ({\rm mod}\ #1)}
\def\floor #1{\lfloor{#1}\rfloor}
\def\mes {{\rm mes}}
\begin{document}

\title{Difference sets and polynomials of prime variables}
\author{Hongze Li}
\email{lihz@sjtu.edu.cn}
\author{Hao Pan}
\email{haopan79@yahoo.com.cn}

\address{
Department of Mathematics, Shanghai Jiaotong University, Shanghai
200240, People's Republic of China
}
\subjclass[2000]{Primary 11P32; Secondary 05D99, 11P55}
\thanks{This work was supported by
the National Natural Science Foundation of China (Grant No.
10471090).}
\maketitle
\begin{abstract}
Let $\psi(x)$ be a polynomial with rational coefficients. Suppose
that $\psi$ has the positive leading coefficient and zero constant
term. Let $A$ be a set of positive integers with the positive
upper density. Then there exist $x,y\in A$ and a prime $p$ such
that $x-y=\psi(p-1)$. Furthermore, if $P$ is a set of primes with
the positive relative upper density, then there exist $x,y\in P$
and a prime $p$ such that $x-y=\psi(p-1)$.
\end{abstract}

\section{Introduction}
\setcounter{Lem}{0}\setcounter{Thm}{0}\setcounter{Cor}{0}
\setcounter{equation}{0}

For a set $A$ of positive integers, define
$$
\od(A)=\limsup_{x\to\infty}\frac{|A\cap[1,x]|}{x}.
$$
Furstenberg \cite[Theorem 1.2]{Furstenberg1977} and S\'ark\"ozy
\cite{Sarkozy78a} independently confirmed the following conjecture
of Lov\'asz:
\begin{Thm}
\label{diffsquare} Suppose that $A$ is a set of positive integers
with $\od(A)>0$, then there exist $x,y\in A$ and a positive
integer $z$ such that $x-y=z^2$.
\end{Thm}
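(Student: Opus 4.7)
\emph{Proof sketch.} The plan is to follow the Fourier-analytic / circle-method argument of S\'ark\"ozy. Choose $N$ large enough that $|A\cap[1,N]|\geq\delta N$ for $\delta=\od(A)/2>0$, write $A_N=A\cap[1,N]$, and assume toward a contradiction that $A$ contains no pair with positive square difference. Setting $Z=\floor{\sqrt{N/2}}$ and $\hat{\1}_A(\alpha)=\sum_{x\in A_N}e(\alpha x)$,
\[
0=\sum_{1\leq z\leq Z}\sum_{x,y\in A_N}\1_{x-y=z^2}=\int_0^1|\hat{\1}_A(\alpha)|^2\,\overline{S(\alpha)}\,d\alpha,\qquad S(\alpha)=\sum_{1\leq z\leq Z}e(\alpha z^2).
\]
The task is to extract a contradiction from the vanishing of this integral.

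First I would dissect $[0,1]$ into Farey-type major arcs $\M$ around rationals $a/q$ with $q\leq Q$ (for $Q$ a small power of $N$ to be chosen) and complementary minor arcs $\m$. Weyl's inequality supplies a non-trivial bound $|S(\alpha)|\ll Z^{1-\sigma}$ on $\m$ for some absolute $\sigma>0$; combined with Parseval's identity $\int_0^1|\hat{\1}_A(\alpha)|^2\,d\alpha=|A_N|$, this shows the minor-arc contribution is $\ll\delta N Z^{1-\sigma}$, which is dwarfed by the expected main term of size $\delta^2 NZ$ as soon as $Z^{\sigma}\gg\delta^{-1}$.

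On each major arc around $a/q$, approximate $S(a/q+\beta)$ by $q^{-1}G(a,q)\int_0^{Z}e(\beta t^2)\,dt$, where $G(a,q)=\sum_{r=0}^{q-1}e(ar^2/q)$ is the usual Gauss sum. Writing $\1_{A_N}=\delta\1_{[1,N]}+f$, the major-arc integral expands into a dominant piece of size $\delta^2 NZ$ plus cross terms controlled by the Fourier transform of the balanced function $f$. For the whole integral to vanish, one of these cross terms must overwhelm the main term, forcing the existence of $a/q$ with $q\leq Q$ at which $|\hat{f}(a/q+\beta)|$ is appreciable on a short arc. Standard pigeonholing on this Fourier spike yields an arithmetic progression $P$ of common difference $q^2$ and length $L\gg N/Q^{O(1)}$ on which $|A\cap P|\geq\delta(1+c\delta)|P|$ for some absolute $c>0$. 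Because the common difference is $q^2$, the affine rescaling $x\mapsto(x-\min P)/q^2$ embeds $A\cap P$ into $[1,L]$ and sends every square difference in $A\cap P$ to a square difference in the image, so the hypothetical counter-example is inherited by a denser set on a shorter interval.

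Iterating this density increment at most $O(\delta^{-1})$ times drives the density past $1$, which is the desired contradiction. The main obstacle, in my view, is the quantitative balance of the three parameters $N$, $Z$, and $Q$: $Q$ must be small enough that each Fourier spike localises $A$ onto an AP of macroscopic length, and simultaneously large enough that Weyl's estimate on $\m$ genuinely beats the main term. The essential number-theoretic content is the Gauss-sum / Weyl analysis of the quadratic exponential sum $S(\alpha)$, which is precisely where the arithmetic structure of squares enters what is otherwise a purely combinatorial density-increment scheme.
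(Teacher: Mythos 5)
Your sketch is a correct outline of the classical Fourier-analytic proof of this statement, essentially S\'ark\"ozy's density-increment argument in its modern form: circle-method dissection, Weyl's inequality on the minor arcs, Gauss-sum approximation on the major arcs, extraction of a large Fourier coefficient of the balanced function, localization to a progression of common difference $q^2$ (so that the square-difference-free property is inherited after rescaling), and iteration $O(\delta^{-1}\log\delta^{-1})$ times; the points you gloss (pigeonholing the Fourier spike onto a subprogression of length a fixed power of $N$, and the parameter bookkeeping across the iteration) are standard. Be aware, though, that the paper itself does not prove this theorem --- it quotes it from Furstenberg and S\'ark\"ozy --- and its own machinery establishes the stronger Theorem \ref{diffprimepoly} through Theorem \ref{diffprimepoly2}, by a route organized rather differently from yours. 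Instead of passing to a single denser subprogression, the authors run a Tao-style induction on the density $\delta$ itself: either at least $\epsilon n$ of the progressions $\A_m(b,q^t)$ with $1\leq q\leq Q$ meet $A$ with density $\geq\delta+\epsilon$, in which case the induction hypothesis applied inside those progressions already produces the required number of solutions, or else the exponential sum of $A$ is uniformly close to $\delta$ times that of $\{1,\ldots,n\}$ on the major arcs; the error terms are then controlled not by Plancherel plus pointwise Gauss-sum decay but by Hua-type mean value estimates for the weighted sums $\sum_z\psi^\Delta(z-1)\lambda_{1,W}(z)e(\alpha\psi(z))$ (Lemmas \ref{huapoly} and \ref{huaprimepoly}), together with Siegel--Walfisz and a Vinogradov-type minor-arc bound (Lemma \ref{primepolyminor}). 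Your route is simpler and entirely adequate for $z^2$, or indeed any fixed integral polynomial vanishing at $0$; the paper's heavier formulation is what allows the shift $z$ to be restricted to $\Lambda_{1,W}$ (prime values after the $W$-trick), keeps the constant $c(\delta,a_{k-t+1})$ of Theorem \ref{diffprimepoly2} independent of $W$ and of the higher coefficients --- exactly what the rescalings $\psi\mapsto\psi_q$ in the induction and the transference to relatively dense sets of primes in Theorem \ref{primediffprimepoly} require --- and yields a lower bound for the number of solutions rather than bare existence. One quantitative quibble in your sketch: with Parseval the minor-arc term is $\ll\delta NZ^{1-\sigma}$, so beating the main term $\delta^2NZ$ needs $Z^{\sigma}\gg\delta^{-1}$, which is harmless here since $Z$ is a power of $N$, but it is this kind of loss that the paper's moment-estimate approach is designed to manage when the weights $\lambda_{b,W}$ are present.
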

In fact, the $z^2$ in Theorem \ref{diffsquare} can be replaced by
an arbitrary integral-valued polynomial $f(z)$ with $f(0)=0$. On
the other hand, S\'ark\"ozy \cite{Sarkozy78b} also solved a
problem of Erd\H os:
\begin{Thm}
\label{diffprime} Suppose that $A$ is a set of positive integers
with $\od(A)>0$, then there exist $x,y\in A$ and a prime $p$ such
that $x-y=p-1$.
\end{Thm}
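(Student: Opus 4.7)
The plan is to attack Theorem~\ref{diffprime} by combining the Hardy--Littlewood circle method with a density-increment argument, following Sárközy's original strategy. I would first pass to a finite setting: by definition of $\od$, for infinitely many $N$ there is some $\delta$ close to $\od(A)$ with $|A \cap [1,N]| \ge \delta N$. Writing $f = \1_{A \cap [1,N]}$ and
$$
S(\alpha) = \sum_{p \le N} (\log p)\, e(p \alpha), \qquad \widehat{f}(\alpha) = \sum_{n} f(n)\, e(n\alpha),
$$
Fourier inversion gives
$$
\mathcal{R}(A) := \sum_{p \le N} (\log p)\, \bigl|\{(x, y) \in (A \cap [1,N])^2 : x - y = p - 1\}\bigr| = \int_0^1 |\widehat{f}(\alpha)|^2\, S(\alpha)\, e(-\alpha)\, d\alpha,
$$
so the theorem will follow once I show $\mathcal{R}(A) > 0$ for some such $N$.

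I would partition $[0,1]$ into major arcs $\M$ centered at rationals $a/q$ with $1 \le q \le Q = Q(\delta)$ and of radius $\sim Q/N$, and the complementary minor arcs $\m$. On $\m$, Vaughan's refinement of Vinogradov's estimate yields $|S(\alpha)| \ll N (\log N)^C Q^{-1/2}$, so that Parseval bounds the minor-arc contribution by $N (\log N)^C Q^{-1/2} \cdot \delta N$, which is $o(\delta^2 N^2)$ provided $Q$ is taken to be a suitable power of $\delta^{-1}(\log N)^C$. On $\M$, the Siegel--Walfisz theorem lets one approximate $S(\alpha)$ near $a/q$ by $(\mu(q)/\varphi(q)) \sum_{n \le N} e(n(\alpha - a/q))$, and the piece near $a/q = 0$ produces the expected main term of order $\delta^2 N^2$ coming from the Fourier mass of $|\widehat f|^2$ at zero.

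If $\mathcal{R}(A) > 0$ we are done. Otherwise the main term must be cancelled by contributions from major arcs around nonzero rationals, which forces $|\widehat{f}(a/q)|$ to be abnormally large for some reduced fraction $a/q$ with $1 \le q \le Q$. A standard pigeonhole argument then yields an arithmetic progression $P$ of length $L \gg \delta^{O(1)} N$ and common difference $q$ on which $A$ has relative density at least $\delta(1 + c\delta)$. I would rescale $P$ affinely to $[1,L]$, replace $A$ by its image, pull the prime condition through the dilation, and iterate; note that after rescaling one is now seeking primes $p \equiv 1 \pmod q$ in a shorter interval, so Siegel--Walfisz must be invoked in residue classes at each stage.

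The main obstacle is twofold. Quantitatively, since the density is boosted by a factor $1 + c\delta$ per step while $N$ shrinks by a factor $\delta^{O(1)}$, one must verify that the iteration reaches the forbidden regime $\delta' > 1$ before $N'$ becomes too small for the minor-arc estimate to beat the main term; this is what dictates the admissible choice of $Q$. Qualitatively, the Siegel--Walfisz main term must survive uniformly in the moduli $q \le Q$ that arise along the iteration, which requires careful treatment of any exceptional Siegel zero and explicit control of the residue-class main term. Once these two ingredients are in place, the contradiction with $\delta' \le 1$ forces $\mathcal{R}(A) > 0$ for some $N$, and the theorem follows.
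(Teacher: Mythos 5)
Your overall frame --- circle method plus density increment, together with the correct observation that rescaling along a progression of difference $q$ turns the condition into ``primes $\equiv 1 \pmod q$'' and hence forces Siegel--Walfisz in residue classes (this is exactly the role of $\Lambda_{1,W}$ and the parameter $W$ in the paper) --- is the right family of ideas, but the pivotal step of your sketch is not justified. From ``the count vanishes, so the principal-arc main term of size $\gg\delta^2N^2$ must be cancelled by the non-principal major arcs'' you conclude by ``a standard pigeonhole'' that a single $|\widehat f(a/q)|$ is abnormally large, large enough to yield a progression on which the density rises to $\delta(1+c\delta)$; that requires a coefficient of size $\gg\delta^2N$. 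But there are about $Q^2$ non-principal major arcs, on $\M_{a,q}$ one only has $|S(\alpha)|\ll N/\phi(q)$, and your own minor-arc treatment forces $Q$ to be at least of order $\delta^{-2}(\log N)^{O(1)}$ (you need $\sup_{\m}|S|\cdot\delta N\ll\delta^2N^2$). Bounding the total non-principal major-arc contribution by a sup of $|\widehat f|$ off the principal arc, even after a dyadic pigeonhole in $q$ and in the arcs, only forces a coefficient whose size falls short of $\delta^2N$ by a factor that is a power of $\log N$ (roughly $\delta N/(Q\log Q)$), because the weights $1/\phi(q)$ do not sum and the arcs are many. With that weaker threshold the increment per step is $\delta^{O(1)}(\log N)^{-O(1)}$ while each step shrinks the length by a comparable factor, and the iteration dies before the density can reach the trivial regime. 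This accumulation over many major arcs is precisely what makes S\'ark\"ozy's theorem substantially harder than Roth's; your ``main obstacle'' paragraph (step counting, Siegel zeros) does not touch it.

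The missing ingredient is a mean-value (restriction-type) estimate for the exponential sum over shifted primes, i.e. an $L^\rho$ bound for some fixed $\rho$, which allows all non-principal major arcs to be handled simultaneously by H\"older with no loss of powers of $\log N$. This is exactly how the paper argues in the case $\psi(z)=z$ of Theorem \ref{diffprimepoly2}: Lemmas \ref{huapoly} and \ref{huaprimepoly} give $\int_{\T}\big|\sum_{z\le M}\psi^\Delta(z-1)\lambda_{1,W}(z)e(\alpha\psi(z))\big|^{\rho}d\alpha\ll\psi(M)^{\rho-1}$, and this is combined, via H\"older and Parseval, with the bound $\big|\sum_x\1_A(x)e(\alpha x)-\delta\sum_{x\le n}e(\alpha x)\big|\le16\epsilon n$ on the major arcs --- a bound obtained not from an assumed vanishing of the count but from the combinatorial dichotomy (\ref{assumption}) on densities along subprogressions; in the opposite case of that dichotomy the induction hypothesis (with modulus $Wq$) is applied inside the subprogressions to count solutions directly, so no ``density exceeds $1$'' contradiction is ever needed. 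To repair your sketch you must either import such an $L^\rho$ estimate (as in Green's and Ruzsa--Sanders' treatments, or the paper's Hua-type lemmas) or reproduce S\'ark\"ozy's original, far more elaborate major-arc analysis; as written, the central deduction fails quantitatively.
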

For the further developments of Theorems \ref{diffsquare} and
\ref{diffprime}, the readers may refer to \cite
{Srinivasan85}, \cite{PintzSteigerSzemeredi88},
\cite{BalogPelikanPintzSzemeredi94}, \cite{Green02}, \cite{Lucier06}, \cite{Lucier}, \cite{RuzsaSanders}. In the
present paper, we shall give a common generalization of Theorems
\ref{diffsquare} and \ref{diffprime}. Define
$$
\Lambda_{b,W}=\{x:\,Wx+b\text{ is prime}\}
$$
for $1\leq b\leq W$ with $(b,W)=1$.
\begin{Thm}
\label{diffprimepoly} Let $\psi(x)$ be a polynomial with integral
coefficients and zero constant term. Suppose that $A\subseteq\Z^+$
satisfies $ \od(A)>0$. Then there exist $x, y\in A$ and
$z\in\Lambda_{1,W}$ such that $x-y=\psi(z)$.
\end{Thm}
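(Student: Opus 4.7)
The plan is to adapt the Fourier-analytic proof of Theorem~\ref{diffprime} (Hardy--Littlewood circle method combined with a density-increment iteration) so as to accommodate the polynomial $\psi$. \emph{Setup and $W$-trick:} fix $\delta>0$ and suppose $A\subseteq[1,N]$ with $|A|\geq\delta N$. Choose $w=w(N)\to\infty$ slowly, set $W=\prod_{p<w}p$, let $k=\deg\psi$, and pick $Z$ so that $|\psi(z)|\leq N/2$ whenever $z\leq Z$. Define the $W$-normalised prime weight
$$\lambda(z):=\frac{\phi(W)}{W}\log(Wz+1)\cdot\1_{\Lambda_{1,W}}(z),$$
and the counting operator
$$T(A):=\sum_{z\leq Z}\lambda(z)\sum_{x\in\Z}\1_{A}(x)\,\1_{A}(x+\psi(z)).$$
It then suffices to show $T(A)>0$ once $N$ is large in terms of $\delta$.

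\emph{Circle method:} setting $f(\alpha):=\sum_{x\in A}e(x\alpha)$ and $G(\alpha):=\sum_{z\leq Z}\lambda(z)e(\psi(z)\alpha)$, Parseval yields $T(A)=\int_{0}^{1}|f(\alpha)|^{2}G(-\alpha)\,d\alpha$. Partition $[0,1)$ into major arcs $\M$ (those $\alpha$ close to some rational $a/q$ with $q\leq Q$, $Q$ slowly growing) and minor arcs $\m$. On $\M$, combine Siegel--Walfisz with the equidistribution of $\psi\pmod q$ for $(q,W)=1$ to extract the expected main term of order $\delta^{2}NZ$. On $\m$, the key ingredient is the uniform bound $\sup_{\alpha\in\m}|G(\alpha)|=o(Z)$, which one proves by $k-1$ rounds of Weyl/van~der~Corput differencing on $\psi$ (to reduce to a linear polynomial) followed by Vaughan's identity applied to the prime variable; together with Parseval's inequality $\int_{0}^{1}|f|^{2}\ll N$, this makes the minor-arc contribution negligible.

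\emph{Density increment and iteration:} if the minor-arc bound and main-term calculation fail to force $T(A)>0$, then $|f(\alpha_{0})|$ must be anomalously large at some $\alpha_{0}\in\M$. A standard Fourier localisation converts this into a sub-arithmetic progression $P\subseteq[1,N]$ of common difference dividing $qW$ on which $|A\cap P|/|P|\geq\delta+c(\delta)$ for some $c(\delta)>0$. Rescaling $P$ to an interval $[1,N']$ with $N'\gg N/(qW)$ and upgrading $W$ to a compatible larger modulus, we iterate. Since the relative density is capped at $1$, only $O(c(\delta)^{-1})$ iterations can occur, so at some stage $T(A)>0$ is forced, producing the desired $x,y,z$.

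\emph{Main obstacle:} the principal difficulty lies in the uniform minor-arc bound on $G(\alpha)$. After $k-1$ Weyl differencings the surviving exponential sum is a bilinear form over primes whose cancellation demands a careful Vaughan-type decomposition with explicit control on all parameters as $W$ grows. Matching these bounds with the admissible scale of the density increment (so that Siegel--Walfisz remains effective throughout the iteration and the new modulus stays within the Siegel--Walfisz range) is the most delicate part of the argument; in particular one must verify that the $W$-trick and the increment both preserve the $\Lambda_{1,W}$-structure so that iteration remains meaningful at each step.
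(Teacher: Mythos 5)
Your overall strategy (circle method with the $W$-trick, Siegel--Walfisz on the major arcs, Weyl differencing plus Vaughan's identity on the minor arcs, and a density-increment iteration) is the same family of argument as the paper's, but as written it has two genuine gaps, and they sit exactly at the places the paper works hardest. First, the increment step is mis-specified. If you pass to a subprogression $P$ of common difference dividing $qW$, the equation $x-y=\psi(z)$ does not descend to the rescaled copy of $P$: you need the common difference to divide the values of $\psi$, which is arranged by taking difference $q^{t}$ (where $t$ is the order of the lowest-degree term of $\psi$) and restricting $z$ to multiples of $q$, so that the equation becomes $x'-y'=\psi_q(z')$ with $\psi_q(x)=\psi(qx)/q^{t}$ and the prime condition becomes $z'\in\Lambda_{1,Wq}$. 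This means the iteration changes not only the modulus but the polynomial, whose coefficients grow geometrically (the leading coefficient becomes $a_1q^{k-t}$ at each step); for the iteration to close after $O_\delta(1)$ steps you need all your quantitative bounds to be uniform in $W$ and in every coefficient except the lowest one $a_{k-t+1}$, which is the only datum invariant under $\psi\mapsto\psi_q$. The paper's Theorem \ref{diffprimepoly2} and the Remark following it are formulated precisely to secure this uniformity (the constant is $c(\delta,a_{k-t+1})$, and the count is normalised by $a_1^{-1/k}$ and $W/\phi(W)$); your "upgrade $W$ to a compatible larger modulus so Siegel--Walfisz stays effective" addresses the modulus but not the polynomial, and without it the induction does not terminate in a meaningful statement.

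Second, the major-arc analysis cannot be run with only Parseval and a minor-arc sup bound. The contribution $\int_{\M}|f(\alpha)|^2G(-\alpha)\,d\alpha$ involves roughly $Q^2$ arcs on which $|G|$ is of size comparable to its maximum, while $\int_0^1|f|^2\ll\delta N$ is far too lossy there: the threshold at which "$|f(\alpha_0)|$ anomalously large on $\M$" can be converted into a density increment must be $\gg_\delta N$, and bridging that with the main term requires an extra mean-value ingredient for $G$. The paper supplies this with the restriction-type estimate $\int_\T|G|^{\rho}\,d\alpha\ll\gcd(\psi)\psi(N)^{\rho-1}$ for $\rho\geq k2^{k+2}+1$ (Lemma \ref{huaprimepoly}, resting on the complete-sum bound of Lemma \ref{huasumprime}), combined with the inequality $||x|^2-|y|^2|\leq 4|x-y|^{2/\rho}(|x|^{2-2/\rho}+|y|^{2-2/\rho})$ and H\"older; moreover it organizes the increment differently (a Tao-style induction on $\delta$: either at least $\epsilon n$ translates of difference-$q^{t}$ progressions with $q\leq Q$ see density $\delta+\epsilon$, which feeds the induction directly, or $\widehat{\1_A}$ is within $16\epsilon n$ of $\delta\widehat{\1_{[n]}}$ on every major arc, in which case the weighted count with weight $\psi^{\Delta}(z-1)\lambda_{1,W}(z)$ is computed outright). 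Your sketch contains no analogue of this $L^{\rho}$ input, and instead flags the minor-arc sup bound as the main obstacle; that bound is indeed long (it is the paper's Lemma \ref{primepolyminor}, proved in the appendix by Weyl differencing and Vaughan's identity) but it is the standard part, whereas the restriction estimate and the coefficient-uniform iteration bookkeeping are where the real difficulty lies.
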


\begin{Cor}
\label{diffprimepolycor} Let $\psi(x)$ be a polynomial with
rational coefficients and zero constant term. Suppose that
$A\subseteq\Z^+$ satisfies $ \od(A)>0$. Then there exist $x, y\in
A$ and a prime $p$ such that $x-y=\psi(p-1)$.
\end{Cor}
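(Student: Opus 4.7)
The plan is to deduce Corollary \ref{diffprimepolycor} from Theorem \ref{diffprimepoly} by a simple dilation trick that converts a polynomial with rational coefficients into one with integer coefficients. Write $\psi(x) = \sum_{i=1}^{d} a_i x^i$ with $a_i \in \Q$ (no constant term by hypothesis). Let $W$ be a positive integer divisible by every denominator appearing among $a_1, \ldots, a_d$, so that each product $a_i W^i$ lies in $\Z$. Set $\phi(x) := \psi(Wx)$; this is an integer-coefficient polynomial with zero constant term, which is exactly the situation handled by Theorem \ref{diffprimepoly}.

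Next I would apply Theorem \ref{diffprimepoly} to $\phi$ with precisely this choice of $W$ and with the given set $A$. The conclusion of the theorem yields elements $x, y \in A$ and $z \in \Lambda_{1,W}$ satisfying
\[
x - y = \phi(z) = \psi(Wz).
\]
Since $z \in \Lambda_{1,W}$, the integer $p := Wz + 1$ is prime by the definition of $\Lambda_{1,W}$, and therefore $p - 1 = Wz$ gives $x - y = \psi(p-1)$, which is exactly the conclusion of the corollary.

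The entire substance of the argument is in Theorem \ref{diffprimepoly}; no density argument, Fourier analysis, or new combinatorial input is needed at this stage. The only point to verify is that $W$ may be chosen simultaneously to clear the denominators of $\psi$ and to serve as the parameter in the theorem, but since $\Lambda_{1,W}$ is defined for every positive integer $W$ (the coprimality condition $(1,W) = 1$ is automatic), this imposes no constraint. Consequently I do not anticipate any real obstacle: the corollary is genuinely a corollary, with the heavy lifting absorbed into the statement of Theorem \ref{diffprimepoly} upstream.
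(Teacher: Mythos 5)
Your proposal is correct and follows essentially the same route as the paper: the paper also takes $W$ to be the least common multiple of the denominators of the coefficients of $\psi$, forms $\psi^*(x)=\psi(Wx)$ with integral coefficients, applies Theorem \ref{diffprimepoly} to get $x,y\in A$ and $z\in\Lambda_{1,W}$ with $x-y=\psi^*(z)$, and sets $p=Wz+1$. The only (harmless) difference is that you allow any common multiple of the denominators rather than the least one, and you spell out that $\Lambda_{1,W}$ is defined for every $W$, which the paper leaves implicit.
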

\begin{proof} Let $W$ be the least common multiple of the
denominators of the coefficients of $\psi$. Then the coefficients
of $\psi^*(x)=\psi(Wx)$ are all integers. Then by Theorem
\ref{diffprimepoly}, there exist $x,y\in A$ and
$z\in\Lambda_{1,W}$ such that
$$
x-y=\psi^*(z)=\psi(p-1)
$$
where $p=Wz+1$.
\end{proof}
Quite recently, about one month after the first version of this paper was open in the arXiv server,
in \cite{BergelsonLesigne} Bergelson and Lesigne proved that the set
$$
\{(\psi_1(p-1),\ldots,\psi_m(p-1)):\,p\text{ prime}\}
$$
is an enhanced van der Corput set $\Z^m$,
where $\psi_1,\ldots,\psi_m$ are polynomials with integral coefficients and zero constant term.
Of course, their result can be extended to the set
$\{(\psi_1(z),\ldots,\psi_m(z)):\,z\in\Lambda_{1,W}\}$ without any special difficulty.
On the other hand, Kamae and Mend\'es France \cite{KamaeMendes78} proved that any van der Corput set is also a set of $1$-recurrence.
Hence Bergelson and Lesigne's result also implies our Theorem \ref{diffprimepoly} and Corollary \ref{diffprimepolycor}. In fact, they showed that the set
$\{\psi(p-1):\,p\text{ prime}\}$ is not only a set of $1$-recurrence, but also a set of strong $1$-recurrence.

For two sets $A, X$ of positive integers, define
$$
\od_{X}(A)=\limsup_{x\to\infty}\frac{|A\cap
X\cap[1,x]|}{|X\cap[1,x]|}.
$$
Let $\P$ denote the set of all primes. In \cite{Green05}, Green
established a Roth's-type extension of a result of van der Corput
\cite{Corput39} on 3-term arithmetic progressions in primes:

\medskip
{\it Let $P$ be a set of primes with $\od_{\P}(P)>0$, then there
exists a non-trivial 3-term arithmetic progressions contained in
$P$.}
\medskip

The key of Green's proof is a transference principle, which
transfers a subset $P\subseteq\P$ to a subset
$A\subseteq\Z_N=\Z/N\Z$ with $|A|/N\geq \od_\P(P)/64$, where $N$ is
a large prime. Using Green's ingredients, now we can show that:
\begin{Thm}
\label{primediffprimepoly} Let $\psi(x)$ be a polynomial with
integral coefficients and zero constant term. Suppose that
$P\subseteq\P$ satisfies $ \od_\P(P)>0$. Then there exist $x, y\in
P$ and $z\in\Lambda_{1,W}$ such that $x-y=\psi(z)$.
\end{Thm}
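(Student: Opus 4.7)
The plan is to deploy Green's transference principle in essentially the same way he used it for $3$-APs in the primes, but with the linear progression replaced by the polynomial relation $x-y=\psi(z)$, so that after transference one reduces to a $\Z_N$-version of the already proven Theorem \ref{diffprimepoly}.

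The first step is the standard $W$-trick. One picks a slowly growing parameter $w$, absorbs $\prod_{p\le w}p$ into $W$, and uses pigeonhole on the residue class $p\bmod W$ to replace $P$ by a subset lying in a single class $b\pmod W$ with $(b,W)=1$; the relative density is preserved up to the factor $\varphi(W)$. Following Green, one then embeds the normalised indicator of this set into $\Z_N$ for a large prime $N$ and majorises it by a sieve weight $\nu:\Z_N\to[0,\infty)$ built from a truncated Goldston--Y{\i}ld{\i}r{\i}m divisor sum. The weight $\nu$ satisfies $\E\nu=1+o(1)$ together with enough linear-forms and correlation conditions on $\Z_N$. Likewise, $\Lambda_{1,W}\cap[1,N]$ is majorised by an analogous weight $\nu'$.

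The core new ingredient is a generalised von Neumann estimate for the counting form
\[
\Lambda_\psi(f,g)=\E_{x,z\in\Z_N}f(x)f(x+\psi(z))g(z),
\]
asserting that if $0\le f\le\nu$ and $0\le g\le\nu'$, and if $f=f_1+f_2$ is a Green-style decomposition with $0\le f_1\le 1$, $\E f_1=\E f+o(1)$ and $\|f_2\|_{U^k}=o(1)$ for a suitable $k=k(\deg\psi)$, then $|\Lambda_\psi(f,g)-\Lambda_\psi(f_1,\1_{\Lambda_{1,W}})|=o(1)$. The bound should follow from Cauchy--Schwarz iterated along the polynomial $\psi(z)$, combined with the pseudorandomness of $\nu$ and $\nu'$. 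This is the step I expect to be the main obstacle: the polynomial Cauchy--Schwarz tricks (in the spirit of S\'ark\"ozy's original argument and its refinement by Lucier) must coexist with the majorant conditions, and the required degree $k$ must be compatible with the pseudorandomness hypotheses one can actually impose on $\nu$ and $\nu'$ via the Goldston--Y{\i}ld{\i}r{\i}m machinery.

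Once the generalised von Neumann step is in place, the argument is completed by applying a $\Z_N$-version of Theorem \ref{diffprimepoly} to the bounded function $f_1$. Since $f_1$ inherits from $f$ a mean of at least $\od_\P(P)/64+o(1)$, that version yields $\Lambda_\psi(f_1,\1_{\Lambda_{1,W}})\gg 1$, and the generalised von Neumann estimate then forces $\Lambda_\psi(f,\1_{\Lambda_{1,W}})>0$. Unwinding the $W$-trick produces the required $x,y\in P$ and $z\in\Lambda_{1,W}$ with $x-y=\psi(z)$.
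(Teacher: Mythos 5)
Your plan is a genuinely different route from the paper's (the paper does not use Goldston--Y{\i}ld{\i}r{\i}m majorants, Gowers norms, or a dense-model decomposition; it follows Green's Fourier-analytic transference from ``Roth's theorem in the primes'': a $W$-trick, the Bourgain--Green restriction estimate $\sum_r|\tilde a(r)|^\rho\le C(\rho)$, a companion restriction estimate for the weighted polynomial exponential sum $\sum_z\psi^\Delta(z-1)\lambda_{1,\W W}(z)e(-\psi_{\W}(z)r/N)$, smoothing by a double Bohr-set convolution $a'=a*\beta*\beta$, and then the quantitative counting Theorem \ref{diffprimepoly2} applied to the level set of $a'$). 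But as it stands your proposal has a genuine gap at exactly the step you flag: the ``generalised von Neumann estimate'' for $\Lambda_\psi(f,g)=\E_{x,z}f(x)f(x+\psi(z))g(z)$ with $0\le f\le\nu$, $0\le g\le\nu'$ is asserted, not proved, and it is not a routine consequence of the linear-forms/correlation conditions one gets from a truncated divisor sum. Controlling a polynomial pattern by Gowers norms requires PET induction and a \emph{polynomial} forms condition on the majorant (this is the heavy machinery of Tao--Ziegler), and the weight $g\le\nu'$ attached to the polynomial variable forces you, one way or another, to estimate sums of the shape $\sum_z\lambda_{1,W}(z)e(\alpha\psi(z))$ on minor arcs --- Vinogradov-type information that no generic pseudorandomness hypothesis supplies. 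The paper has to prove precisely these estimates (Lemma \ref{primepolyminor} and the appendix, plus the Hua-type bounds of Lemmas \ref{huasumprime}--\ref{huaprimepoly}), and in your scheme they would still be needed inside the von Neumann step; so the Gowers-norm framework adds difficulty without removing the real work. Note also that for this particular pattern the form is bilinear in $f$ and is diagonalised by Fourier analysis, $\sum_{x-y=\psi(z)}a(x)a(y)w(z)=\frac1N\sum_r\tilde a(r)\tilde a(-r)\widetilde{w}(r)$, so the $U^2$-level (Fourier) theory with an $L^\rho$ restriction estimate suffices; demanding $\|f_2\|_{U^k}=o(1)$ for $k=k(\deg\psi)$ is both unnecessary and harder to obtain.

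A second, smaller but real, issue is the final appeal to ``a $\Z_N$-version of Theorem \ref{diffprimepoly}'' for the bounded component $f_1$. After the $W$-trick the polynomial is rescaled to $\psi_{\W}(x)=\psi(\W x)/\W^t$, whose coefficients (other than the lowest one) grow with $\W$, and $\W\to\infty$ with $n$. The lower bound you need must therefore be uniform in these blown-up coefficients; this is exactly why the paper proves the quantitative Theorem \ref{diffprimepoly2} with a constant $c(\delta,a_{k-t+1})$ depending only on $\delta$, $k$ and the \emph{lowest} coefficient, and the Remark after it stresses that this uniformity is what makes the transference work. Your sketch does not address this dependence, and a version of the dense statement whose implied constants degrade with the coefficients of $\psi_{\W}$ would not close the argument. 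In short: the architecture (transfer to a bounded function, apply the dense theorem, compare counts) matches the paper in spirit, but the two load-bearing ingredients --- the comparison-of-counts estimate with the prime-polynomial weight, and the coefficient-uniform dense counting theorem --- are precisely the parts left unproved in your proposal.
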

Similarly, we have
\begin{Cor}
\label{primediffprimepolycor} Let $\psi(x)$ be a polynomial with
rational coefficients and zero constant term. Suppose that
$P\subseteq\P$ satisfies $ \od_\P(P)>0$. Then there exist $x, y\in
P$ and a prime $p$ such that $x-y=\psi(p-1)$.
\end{Cor}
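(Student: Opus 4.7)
The plan is to reduce Corollary~\ref{primediffprimepolycor} to Theorem~\ref{primediffprimepoly} by exactly the same clearing-denominators device that turns Theorem~\ref{diffprimepoly} into Corollary~\ref{diffprimepolycor}. First I would let $W$ be the least common multiple of the denominators of the coefficients of $\psi$, and set $\psi^*(x):=\psi(Wx)$. Then $\psi^*$ has integer coefficients and $\psi^*(0)=\psi(0)=0$, so $\psi^*$ is an admissible input to Theorem~\ref{primediffprimepoly}.

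Next, since $P\subseteq\P$ has $\od_\P(P)>0$, Theorem~\ref{primediffprimepoly} applied to $\psi^*$ and $P$ produces $x,y\in P$ and $z\in\Lambda_{1,W}$ with
$$
x-y=\psi^*(z)=\psi(Wz).
$$
By the definition of $\Lambda_{1,W}$, the integer $p:=Wz+1$ is prime, and $\psi(Wz)=\psi(p-1)$. Hence $x-y=\psi(p-1)$ with $x,y\in P$ and $p$ prime, which is exactly the conclusion claimed.

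There is no genuine obstacle to this argument: the corollary is a one-line substitution, and all of the depth is already contained in Theorem~\ref{primediffprimepoly}, whose transference-principle proof in the spirit of Green~\cite{Green05} does the heavy lifting. The only point worth keeping in mind is that the specific $W$ prescribed by the denominators of $\psi$ must be permissible in the conclusion of Theorem~\ref{primediffprimepoly}; this is automatic, however, since $W$ depends only on $\psi$ (not on $P$) and the theorem is stated for this $W$.
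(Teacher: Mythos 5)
Your proposal is correct and is exactly the argument the paper intends: the paper derives Corollary~\ref{primediffprimepolycor} from Theorem~\ref{primediffprimepoly} "similarly" to how Corollary~\ref{diffprimepolycor} follows from Theorem~\ref{diffprimepoly}, i.e.\ by taking $W$ to be the least common multiple of the denominators, setting $\psi^*(x)=\psi(Wx)$, and writing $p=Wz+1$ for $z\in\Lambda_{1,W}$. Nothing further is needed.
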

On the other hand, the well-known Szemer\'edi theorem
\cite{Szemeredi75} asserts that for any set $A$ of positive
integers with $\od(A)>0$, there exist arbitrarily long arithmetic
progressions contained in $A$. In \cite{BergelsonLeibman96},
Bergelson and Leibman extended Theorem \ref{diffsquare} and
Szemer\'edi's theorem:

\medskip
{\it Let $\psi_1(x),\ldots,\psi_m(x)$ be arbitrary integral-valued
polynomials with $\psi_1(0)=\cdots=\psi_m(0)=0$. Then for any set
$A$ of positive integers with $\od(A)>0$, there exist $x\in A$ and
a integer $z$ such that $x+\psi_1(z),\ldots,x+\psi_m(z)$ are all
contained in $A$.}
\medskip

Recently, Tao and Ziegler \cite{TaoZiegler} proved that:

\medskip
{\it Let $\psi_1(x),\ldots,\psi_m(x)$ be arbitrary integral-valued
polynomials with $\psi_1(0)=\cdots=\psi_m(0)=0$. Then for any set
$P$ of primes with $\od_\P(P)>0$, there exist $x\in P$ and a
integer $z$ such that $x+\psi_1(z),\ldots,x+\psi_m(z)$ are all
contained in $P$.}
\medskip

This is a generalization of Green and Tao's celebrated result
\cite{GreenTao1} that the primes contain arbitrarily long
arithmetic progressions. Furthermore, with the help of a very deep
result due to Green and Tao \cite{GreenTao2} on the Gowers norms
\cite{Gowers01}, Frantzikinakis, Host and Kra
\cite{FrantzikinakisHostKra} proved that if $\od(A)>0$ then $A$
contains a 3-term arithmetic progression with the difference
$p-1$, where $p$ is a prime. In fact, using the methods of Green
and Tao in \cite{GreenTao2}, it is not difficult to replace $A$ by
$P$ with $\od_\P(P)>0$ in the result of Frantzikinakis, Host and
Kra.

Motivated by the above results, here we propose two conjectures:
\begin{Conj}
Let $\psi_1(x),\ldots,\psi_m(x)$ be arbitrary polynomials with
rational coefficients and zero constant terms. Then for any set
$A$ of positive integers with $\od(A)>0$, there exist $x\in A$ and
a prime $p$ such that $x+\psi_1(p-1),\ldots,x+\psi_m(p-1)$ are all
contained in $A$.
\end{Conj}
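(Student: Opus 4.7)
The plan is to combine the $W$-trick from Corollary \ref{diffprimepolycor} with a weighted polynomial multiple recurrence argument. Let $W$ be the least common multiple of the denominators of all coefficients of $\psi_1,\ldots,\psi_m$ and set $\psi_i^*(x)=\psi_i(Wx)$, which has integer coefficients. After this substitution it suffices to prove that for every $A\subseteq\Z^+$ with $\od(A)>0$ there exist $x\in A$ and $z\in\Lambda_{1,W}$ with $x+\psi_i^*(z)\in A$ for every $i=1,\ldots,m$. This is the common generalization of Theorem \ref{diffprimepoly} (case $m=1$) and of Bergelson--Leibman's polynomial Szemer\'edi theorem (case where $z$ ranges over all integers with no primality constraint).

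By the Furstenberg correspondence principle, the conclusion follows once we establish, for some invertible measure-preserving system $(X,\mathcal{B},\mu,T)$ and some $E\in\mathcal{B}$ with $\mu(E)\geq\od(A)$, the positivity
\begin{equation*}
\limsup_{N\to\infty}\frac{1}{|\Lambda_{1,W}\cap[1,N]|}\sum_{z\in\Lambda_{1,W}\cap[1,N]}\mu\bigl(E\cap T^{-\psi_1^*(z)}E\cap\cdots\cap T^{-\psi_m^*(z)}E\bigr)>0.
\end{equation*}
The natural strategy is to introduce the von Mangoldt weight $\Lambda(Wz+1)$ (with the appropriate $\phi(W)/W$ normalisation) and then transfer in the style of Green--Tao--Ziegler: decompose $\Lambda$ on the progression $Wz+1$ as a bounded piece plus a Gowers-uniform error, and argue that the multilinear polynomial average appearing above is insensitive, in a suitable Gowers-norm sense, to replacing the weight by the constant~$1$. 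The unweighted version is exactly Bergelson--Leibman, which supplies the positive lower bound. The two main ingredients one expects to need are (i) the polynomial forms / correlation conditions of Green--Tao and Tao--Ziegler, to construct a pseudorandom majorant for $\Lambda(W\cdot+1)$ adapted to the configuration $(\psi_1^*(z),\ldots,\psi_m^*(z))$, and (ii) a PET induction of Bergelson--Leibman type to reduce the relevant polynomial multilinear average to a Gowers norm $U^s$ of bounded level.

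The principal obstacle will be exactly this generalized von Neumann / PET step in the weighted setting. Whereas \cite{TaoZiegler} handle polynomial configurations $x+\psi_1(z),\ldots,x+\psi_m(z)$ with $x$ in the primes and $z$ in the integers, here the roles are reversed: $x$ ranges through a dense set of integers while $z$ is forced to be prime, so the Gowers-norm estimates must absorb the arithmetic structure of the polynomial variable rather than of the base point. Running PET on $z$ while still carrying the weight $\Lambda(Wz+1)$ is delicate because PET differentiations introduce auxiliary shift variables that must themselves be controlled by restricted or pseudorandom majorants; this is where I expect the bulk of the technical work to lie. The soft, structural half of the argument---the behaviour of the characteristic factors of polynomial multiple averages evaluated along primes---should reduce, via Host--Kra nilsequence analysis together with Green--Tao's equidistribution of $\{p-1:p\in\P\}$ on nilmanifolds, to the classical Bergelson--Leibman lower bound, in the spirit of \cite{FrantzikinakisHostKra} and \cite{TaoZiegler}. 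A reasonable first test case is $m=2$ with linearly independent $\psi_1,\psi_2$, where the relevant norm is $U^3$ and the full Green--Tao $U^3$-inverse machinery is directly available.
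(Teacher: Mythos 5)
The statement you are asked about is not a theorem of the paper at all: it is Conjecture 1, which the authors explicitly pose as an open problem, so there is no proof in the paper to compare against. Your submission, read on its own terms, is a research programme rather than a proof, and you say as much yourself: the two steps you label (i) and (ii) --- a pseudorandom majorant for $\Lambda(W\cdot+1)$ whose linear-forms/correlation conditions are adapted to the configuration $(\psi_1^*(z),\ldots,\psi_m^*(z))$, and a weighted PET/generalized von Neumann estimate reducing the multilinear average to a Gowers norm --- are exactly the content of the conjecture, and you do not carry out either. The difficulty you correctly identify is fatal to the sketch as it stands: in the average $\sum_z \Lambda(Wz+1)\,\mu(E\cap T^{-\psi_1^*(z)}E\cap\cdots\cap T^{-\psi_m^*(z)}E)$ the weight is attached to the variable $z$ that PET differentiates, so each van der Corput step produces correlations of the type $\Lambda(Wz+1)\Lambda(W(z+h)+1)$ (and higher), and the claim that the average is ``insensitive, in a suitable Gowers-norm sense, to replacing the weight by $1$'' is precisely the assertion that must be proved; it does not follow from Tao--Ziegler, where, as you note, the primality constraint sits on the base point $x$ and not on the polynomial variable. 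Likewise the appeal to Bergelson--Leibman only gives positivity of the unweighted average over all $z$, and transferring that positivity to the weighted average over $\Lambda_{1,W}$ is again the whole problem (for $m=1$ this is Theorem 3 of the paper, proved by a delicate circle-method argument; for $m\geq 2$ nothing of the sort is established here).

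Concretely, then, the gap is not a fixable local step but the absence of the two central estimates: (a) a correlation/linear-forms condition for a majorant of the shifted primes strong enough to survive the PET induction generated by $\psi_1^*,\ldots,\psi_m^*$, with the number and shape of the required forms depending on the polynomials through the PET tree; and (b) the characteristic-factor (or inverse-theorem) step showing the weighted polynomial multiple average along $\Lambda_{1,W}$ has the same limit behaviour as the unweighted one. Your outline points at the right literature, but until (a) and (b) are actually established the argument proves nothing beyond the cases already covered by Theorem 3 ($m=1$) and by Frantzikinakis--Host--Kra (linear $3$-term case), and the statement remains, as the paper says, a conjecture.
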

\begin{Conj}
Let $\psi_1(x),\ldots,\psi_m(x)$ be arbitrary polynomials with
rational coefficients and zero constant terms. Then for any set
$P$ of primes with $\od_\P(P)>0$, there exist $x\in P$ and a prime
$p$ such that $x+\psi_1(p-1),\ldots,x+\psi_m(p-1)$ are all
contained in $P$.
\end{Conj}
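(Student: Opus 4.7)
The plan is to combine three strands of machinery developed since \cite{GreenTao1}: the relative Szemer\'edi transference in the primes, the polynomial Szemer\'edi theorem for primes of Tao and Ziegler \cite{TaoZiegler}, and the prime-step multiple recurrence technique of Frantzikinakis, Host and Kra \cite{FrantzikinakisHostKra}. After the usual $W$-trick (restricting to primes in $\Lambda_{1,W}$ for a slowly growing $W$ and writing $p=Wn+1$, so that each $\psi_j(p-1)=\psi_j(Wn)$ has integer coefficients), and after transferring $P$ to a dense subset $A\subseteq\Z_N$ for a large prime $N$, it suffices to show that the multilinear average
\[
T_N(f_0,\dots,f_m)=\E_{x\in\Z_N}\E_{n\in[N]}\widetilde{\Lambda}_W(n)\,f_0(x)\prod_{j=1}^m f_j\bigl(x+\psi_j(Wn)\bigr)
\]
is bounded away from $0$, where $\widetilde{\Lambda}_W$ is the $W$-tricked von Mangoldt weight and the $f_j$ are transferred dense models of $1_P$.

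The first step is an arithmetic-regularity decomposition $f_j=f_j^{\rm str}+f_j^{\rm unif}$ together with $\widetilde{\Lambda}_W=1+(\widetilde{\Lambda}_W-1)$, reducing matters to controlling every summand containing a uniform factor. The second step, and the genuinely hard one, is a \emph{polynomial generalized von Neumann theorem}: any term carrying an $f_j^{\rm unif}$ or the weight $\widetilde{\Lambda}_W-1$ should be controlled by a Gowers norm $U^{s}[N]$ of that factor, where $s=s(\psi_1,\dots,\psi_m)$ is produced by a PET-style induction of Bergelson--Leibman performed in the presence of the prime weight. One then invokes the polynomial forms and correlation conditions for the primes from \cite{GreenTao1,GreenTao2} to get $\|\widetilde{\Lambda}_W-1\|_{U^s[N]}=o(1)$, which both kills the uniform contributions and permits the replacement of $\widetilde{\Lambda}_W$ by $1$ in the main term. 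Positivity of the resulting structured average is then provided by the Tao--Ziegler polynomial Szemer\'edi theorem in primes applied to the $f_j^{\rm str}$.

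The principal obstacle is the second step. The PET reduction of a polynomial average along $p-1$ terminates, after finitely many applications of van der Corput, in averages of shifted correlations of $\widetilde{\Lambda}_W$ along \emph{polynomially related} shifts, and one needs these correlations to coincide with their pseudorandom prediction. In the linear case this is precisely the Green--Tao linear forms condition used by Frantzikinakis--Host--Kra; in the polynomial case one requires the full polynomial forms condition, together with a quantitative $U^s$-inverse theorem compatible with the congruence restriction $n\equiv 0\pmod W$ and with the nilsequences arising in the structured part. One must further contend with the fact that the Gowers degree $s$ demanded by PET grows with the complexity of $(\psi_1,\dots,\psi_m)$, so the inverse theory must be applied in its most uniform known form. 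Modulo these inputs, which are essentially available in the literature, the scheme above should yield Conjecture 2, and would simultaneously give Conjecture 1 as the easier dense-integer analogue.
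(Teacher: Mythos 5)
First, a point of orientation: the statement you are proving is stated in the paper as a \emph{conjecture}. The paper proves only the case $m=1$ (Theorem \ref{primediffprimepoly}), via the circle method with the weights $\lambda_{b,W}$, restriction estimates, and Green's transference; for $m\geq 2$ the authors offer no proof, and your programme, if completed, would be a new research-level result rather than a reconstruction of anything in the paper. Judged as a proof, your proposal has a genuine gap, and it is exactly the step you yourself flag as the ``principal obstacle'' and then defer to inputs said to be ``essentially available in the literature.'' They are not. The average $T_N$ you must lower-bound carries \emph{two} unbounded prime weights simultaneously: the shift variable $n$ carries $\widetilde{\Lambda}_W$, and the functions $f_j$ are supported on a relatively dense subset of the primes, hence are themselves only bounded after a transference whose validity is precisely what the polynomial generalized von Neumann theorem and forms conditions are supposed to deliver. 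Writing ``the $f_j$ are transferred dense models of $\1_P$'' therefore assumes the conclusion of the hardest step. The Tao--Ziegler polynomial forms condition controls polynomial shifts applied to functions majorized by their pseudorandom measure; it does not cover the terminal correlations produced by a PET induction in which the von Mangoldt-type weight sits on the \emph{shift} variable while a second prime majorant sits on the shifted points. That correlation estimate is a new analytic input, not a citation, and without it both the transference and the claim that uniform parts are negligible are unsupported. There is also a scale issue you pass over: $\psi_j(Wn)$ has size comparable to $(Wn)^k$, so the $n$-average must be taken over a range like $n\ll N^{1/k}$ (or a short range as in Tao--Ziegler), and the behaviour of $\widetilde{\Lambda}_W-1$ in Gowers norms over such ranges, with the congruence restriction, is again not an off-the-shelf fact (and at the time of this paper even the full-range statement rested on then-unproven inverse and M\"obius-nilsequence conjectures).

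Second, even granting all of the above, the positivity step is misattributed. The Tao--Ziegler theorem produces configurations $x,x+\psi_1(z),\ldots,x+\psi_m(z)$ in a relatively dense subset of the primes with an \emph{integer} shift $z$; it does not lower-bound the structured main term when the shift is constrained to lie in $\Lambda_{1,W}$, i.e.\ to be of the form $p-1$. Showing that the structured part still correlates positively along prime shifts is the heart of the matter: in the case $m=1$ the paper achieves this by explicit major-arc analysis of exponential sums weighted by $\lambda_{1,W}$ (Lemmas \ref{primepolymajor}, \ref{huaprimepoly}, \ref{primepolyrestriction}, feeding into Theorem \ref{diffprimepoly2} and the argument of Section 4), and no analogue of that analysis is available for $m\geq 2$, where a single Fourier coefficient no longer controls the count. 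So the proposal is a reasonable outline of how one might attack the conjecture (and indeed resembles the route later taken for related results in the literature), but as it stands it defers every difficult point to unproved black boxes and does not constitute a proof of either Conjecture 1 or Conjecture 2.
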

The proofs of Theorems \ref{diffprimepoly} and
\ref{primediffprimepoly} will be given in section 3 and section 4.
Throughout this paper, without the additional mentions, the
constants implied by $\ll$, $\gg$ and $O(\cdot)$ will only depend
on the degree of $\psi$.

\section{Some Necessary Lemmas on Exponential Sums }

Let $\T$ denote the torus $\R/\Z$. For any function $f$ over $\Z$,
define $f^\Delta(x)=f(x+1)-f(x)$. Also, we abbreviate
$e^{2\pi\sqrt{-1}x}$ to $e(x)$. Let $\psi(x)=a_1x^k+\cdots+a_kx$ be
a polynomial with integral coefficients. In this section, we always
assume that $W, |a_1|,\ldots,|a_k|\leq \log N$.
\begin{Lem}
\label{polymajor} Suppose that $h(x)$ is an arbitrary polynomial
and $0<\nu<1$. Then for any $\alpha\in\T$
$$
\sum_{x=1}^Nh(x)e(\alpha\psi(x))=\frac{1}{q}\sum_{r=1}^qe(a\psi(r)/q)\sum_{x=1}^Nh(x)e((\alpha-a/q)\psi(x))+O_{\deg
h}(h(N)N^{\nu})
$$
provided that $|\alpha q-a|\leq N^\nu/\psi(N)$ with $1\leq a\leq
q\leq N^\nu$.
\end{Lem}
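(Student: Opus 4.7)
The plan is to carry out the standard major-arc decomposition for polynomial exponential sums. Write $\alpha=a/q+\beta$, so $|\beta|\le N^\nu/(q\psi(N))$ by hypothesis. Because $\psi$ has integer coefficients, $\psi(qm+r)\equiv\psi(r)\pmod{q}$ for every $m$ and $r$, so the phase $e((a/q)\psi(qm+r))=e(a\psi(r)/q)$ is independent of $m$. Splitting the $x$-sum by residue classes $x=qm+r$ with $1\le r\le q$ and pulling out the $r$-dependent phase rewrites the left-hand side as
$$
\sum_{r=1}^q e(a\psi(r)/q)\,T_r,\qquad T_r:=\sum_{\substack{m\ge 0\\ qm+r\in[1,N]}} F(qm+r),\quad F(x):=h(x)e(\beta\psi(x)).
$$

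The next step is to compare each $T_r$ with the flat average $q^{-1}\sum_{x=1}^N F(x)=q^{-1}\sum_{r'}T_{r'}$; the lemma's error then equals $q^{-1}\sum_{r}e(a\psi(r)/q)\sum_{r'}(T_r-T_{r'})$, so it suffices to estimate $|T_r-T_{r'}|$. Pairing terms with the same $m$ and telescoping $F(qm+r')-F(qm+r)$ through consecutive integers gives
$$
|T_r-T_{r'}|\le\frac{N}{q}\,|r-r'|\max_{1\le x\le N}|F'(x)|+O(h(N)),
$$
where the $O(h(N))$ absorbs the $O(1)$ boundary values of $m$ where $qm+r$ and $qm+r'$ are not both in $[1,N]$.

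The derivative bound is $|F'(x)|\ll|h'(x)|+|\beta|\,|\psi'(x)|\,|h(x)|$. For polynomial $h$ of degree $\deg h$, $|h'(x)|\ll_{\deg h}h(N)/N$ on $[1,N]$; and since the leading coefficient of $\psi$ is a nonzero integer while $|a_j|\le\log N$, one has $|\psi(N)|\gg N^k$ and $|\psi'(x)|\ll k(\log N)N^{k-1}$, so $|\beta|\,|\psi'(x)|\ll N^{\nu-1}/q$ up to a harmless $\log N$ factor absorbed into $\nu$. Hence $\max|F'|\ll_{\deg h}h(N)N^{\nu-1}/q$, and using $\sum_{r,r'=1}^q|r-r'|\ll q^3$ one obtains
$$
\frac{1}{q}\sum_{r,r'}|T_r-T_{r'}|\ll_{\deg h}\frac{1}{q}\cdot\frac{N}{q}\cdot\frac{h(N)N^{\nu-1}}{q}\cdot q^3+q\,h(N)\ll_{\deg h}h(N)N^\nu,
$$
which is the desired bound.

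The only delicate point I anticipate is the exact balance of losses and gains: the factor $1/q$ inside the hypothesized denominator $q\psi(N)$ for $|\beta|$ is precisely what absorbs the $q^2$ loss coming from summing over the pairs $(r,r')$, and this leaves the clean $N^\nu$ saving. With a weaker bound on $|\beta|$ one would end up with an extra factor of $q$, which is unacceptable since $q$ can be as large as $N^\nu$. All other steps (mod-$q$ periodicity of $\psi$, telescoping, bounding the derivative of $F$) are routine polynomial estimates.
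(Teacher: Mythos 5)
Your proof is correct, and it reaches the estimate by a different (though equally standard) route than the paper. The paper proves Lemma \ref{polymajor} by partial summation: writing $\theta=\alpha-a/q$, it sums $h(y)e(\theta\psi(y))$ against the counting function $F_y(a/q)=\sum_{x\le y}e(a\psi(x)/q)=\frac{y}{q}\sum_{r=1}^qe(a\psi(r)/q)+O(q)$ and bounds the discrete differences $h(y+1)e(\theta\psi(y+1))-h(y)e(\theta\psi(y))=O(h^\Delta(y))+O(h(y)\theta\psi^\Delta(y))$, ending with errors $O(\theta qN\psi^\Delta(N)h(N))+O(qh^\Delta(N)N)$, which the hypotheses $|\theta|\le N^\nu/(q\psi(N))$ and $q\le N^\nu$ turn into $O_{\deg h}(h(N)N^\nu)$. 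You instead sort the sum into residue classes $x\equiv r\pmod{q}$ (using $\psi(qm+r)\equiv\psi(r)\pmod{q}$) and compare each class sum $T_r$ with the average $q^{-1}\sum_{r'}T_{r'}$, controlling $|T_r-T_{r'}|$ by pairing equal $m$ and applying the mean value theorem to $F(x)=h(x)e(\beta\psi(x))$. Both arguments rest on exactly the same two facts, mod-$q$ periodicity of the rational phase and slow variation of $h(x)e(\beta\psi(x))$, and your bookkeeping is sound: the $q^3$ from $\sum_{r,r'}|r-r'|$ against the $1/q$ in $|\beta|$ and the outer $1/q$ is the exact analogue of the paper's explicit factor $q$ in $O(\theta qN\psi^\Delta(N)h(N))$. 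The paper's Abel summation is slightly more economical; your version makes more transparent where the $1/(q\psi(N))$ hypothesis is spent. You also share the paper's implicit reading of ``arbitrary polynomial $h$'' (namely $\max_{[1,N]}|h|\ll h(N)$ and $|h'|\ll_{\deg h}h(N)/N$, as holds for the increasing $h=\psi^\Delta(\cdot-1)$ used later), so no loss of rigor relative to the source. One small repair: you cannot ``absorb a $\log N$ into $\nu$'', since $\nu$ is fixed in the statement; but no logarithm is needed, because $\max_{[1,N]}|\psi'|\ll_k a_1N^{k-1}$ (the lower coefficients, though of size up to $\log N$, are damped by $1/N$) while $\psi(N)\gg a_1N^k$, so $|\beta|\,|\psi'(x)|\ll_k N^{\nu-1}/q$ with no logarithmic loss.
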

\begin{proof}
Let $\theta=\alpha-a/q$. Then by a partial summation, we have
\begin{align*}
&\sum_{x=1}^Nh(x)e(a\psi(x)/q)e(\theta\psi(x))\\
=&h(N)e(\theta\psi(N))F_N(a/q)
-\sum_{y=1}^{N-1}(h(y+1)e(\theta\psi(y+1))-h(y)e(\theta\psi(y)))F_y(a/q),
\end{align*}
where
$$
F_y(a/q):=\sum_{x=1}^ye(a\psi(x)/q)=\frac{y}{q}\sum_{r=1}^qe(a\psi(r)/q)+O(q).
$$
Clearly
\begin{align*}
&h(y+1)e(\theta\psi(y+1))-h(y)e(\theta\psi(y))\\
=&
(h(y+1)-h(y))e(\theta\psi(y+1))+h(y)e(\theta\psi(y))(e(\theta\psi^\Delta(y))-1)\\
=&O(h^\Delta(y))+O(h(y)\theta\psi^\Delta(y)).
\end{align*}
This concludes that
\begin{align*}
&\sum_{x=1}^Nh(x)e(a\psi(x)/q)e(\theta\psi(x))\\
=&\frac{1}{q}\sum_{r=1}^qe(a\psi(r)/q)\sum_{x=1}^Nh(x)e(\theta\psi(x))+O(\theta
qN\psi^\Delta(N)h(N))+O(qh^\Delta(N)N).
\end{align*}
\end{proof}
Define
$$
\lambda_{b,W}(x)=\begin{cases}
\frac{\phi(W)}{W}\log(Wx+b)&\qquad\text{if }Wx+b\text{ is prime},\\
0&\qquad\text{otherwise},
\end{cases}
$$
where $\phi$ is the Euler totient function.
\begin{Lem}
\label{primepolymajor} Suppose that $h(x)$ is an arbitrary
polynomial and $B>1$. Then for any $\alpha\in\T$
\begin{align*}
&\sum_{x=1}^Nh(x)\lambda_{b,W}(x)e(\alpha\psi(x))\\
=&\frac{\phi(W)}{\phi(Wq)}\sum_{\substack{1\leq r\leq q\\
(Wr+b,q)=1}}e(a\psi(r)/q)\sum_{x=1}^Nh(x)e((\alpha-a/q)\psi(x))+O_{\deg
h}(h(N)Ne^{-c\sqrt{\log N}})
\end{align*} provided that $|\alpha q-a|\leq (\log N)^B/\psi(N)$
with $1\leq a\leq q\leq (\log N)^B$, where $c$ is a positive
constant.
\end{Lem}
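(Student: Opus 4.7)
The structure will mirror that of Lemma \ref{polymajor}: write $\alpha=a/q+\theta$ with $|\theta|\le(\log N)^B/\psi(N)$, factor
$$
e(\alpha\psi(x))=e(a\psi(x)/q)\,e(\theta\psi(x)),
$$
and perform partial summation against the weight $h(x)e(\theta\psi(x))$, pushing the arithmetic content into a prefix sum
$$
G_y(a/q):=\sum_{x=1}^y\lambda_{b,W}(x)e(a\psi(x)/q).
$$
The only genuinely new ingredient, compared with Lemma \ref{polymajor}, is the evaluation of $G_y(a/q)$; once this is in place, the rest is identical bookkeeping.

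To evaluate $G_y(a/q)$, I would sort $x$ into residue classes modulo $q$. Since $e(a\psi(x)/q)$ depends only on $x\bmod q$, writing $x=r+qm$ with $1\le r\le q$ gives
$$
G_y(a/q)=\sum_{r=1}^qe(a\psi(r)/q)\sum_{\substack{1\le x\le y\\x\equiv r\pmod q}}\lambda_{b,W}(x).
$$
The term $\lambda_{b,W}(x)$ is supported on $x$ with $Wx+b$ prime. Since $1\le b\le W$ and $(b,W)=1$, primality forces $(Wr+b,W)=1$ automatically, so the residue class $r$ contributes nothing unless $(Wr+b,q)=1$. For the surviving $r$, the inner sum is a sum of $\tfrac{\phi(W)}{W}\log p$ over primes $p\equiv Wr+b\pmod{Wq}$ with $p\le Wy+b$. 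Because $Wq\le(\log N)^{B+1}$, Siegel--Walfisz applies and gives
$$
\sum_{\substack{p\le Wy+b\\p\equiv Wr+b\!\!\pmod{Wq}}}\!\!\log p=\frac{Wy+b}{\phi(Wq)}+O\bigl(y\,e^{-c\sqrt{\log y}}\bigr),
$$
so that
$$
G_y(a/q)=\frac{\phi(W)\,y}{\phi(Wq)}\sum_{\substack{1\le r\le q\\(Wr+b,q)=1}}e(a\psi(r)/q)+O\bigl(q\,y\,e^{-c\sqrt{\log y}}\bigr).
$$

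With this asymptotic, the partial summation identity
$$
\sum_{x=1}^Nh(x)\lambda_{b,W}(x)e(\alpha\psi(x))=h(N)e(\theta\psi(N))G_N(a/q)-\sum_{y=1}^{N-1}\bigl(h(y+1)e(\theta\psi(y+1))-h(y)e(\theta\psi(y))\bigr)G_y(a/q)
$$
exactly reproduces the main term $\frac{\phi(W)}{\phi(Wq)}\sum_{(Wr+b,q)=1}e(a\psi(r)/q)\sum_{x=1}^Nh(x)e(\theta\psi(x))$ after reversing the partial summation. The residual error is controlled by the bounds $h(y+1)e(\theta\psi(y+1))-h(y)e(\theta\psi(y))=O(h^\Delta(y))+O(h(y)\theta\psi^\Delta(y))$ used in Lemma \ref{polymajor}, combined with the $q\,y\,e^{-c\sqrt{\log y}}$ from Siegel--Walfisz; using $|\theta|\psi^\Delta(N)\ll(\log N)^B/N$ and $q\le(\log N)^B$, one checks that the total error collapses to $O_{\deg h}(h(N)Ne^{-c'\sqrt{\log N}})$ for a slightly smaller $c'>0$.

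The main obstacle is ensuring that Siegel--Walfisz genuinely applies with the modulus $Wq$: this requires $Wq\le(\log N)^{B+1}$, which is guaranteed by the hypotheses $W\le\log N$ and $q\le(\log N)^B$, and it is also what forces the error to take the Siegel--Walfisz shape $e^{-c\sqrt{\log N}}$ rather than a power saving. Everything else is a careful transcription of the argument in Lemma \ref{polymajor}, with the Gauss-type identity $\sum_{x=1}^ye(a\psi(x)/q)=(y/q)\sum_{r=1}^qe(a\psi(r)/q)+O(q)$ replaced by the Siegel--Walfisz evaluation of $G_y(a/q)$.
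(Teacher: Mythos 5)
Your proposal follows essentially the same route as the paper: define the prefix sums $\sum_{x\le y}\lambda_{b,W}(x)e(a\psi(x)/q)$, evaluate them by splitting into residue classes modulo $q$ (equivalently, primes in classes modulo $Wq$) via Siegel--Walfisz, and then run the same partial summation as in Lemma \ref{polymajor}, so the argument is correct in substance. The only blemish is your justification for discarding classes with $(Wr+b,q)>1$ (the clause about $(Wr+b,W)=1$ is beside the point); the real reason is that a prime $Wx+b$ in such a class must divide $q$ and hence such terms contribute only a negligible amount, which is harmless and matches the paper's own level of detail.
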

\begin{proof}
Let
\begin{align*} F_y(a/q)=&\sum_{x=1}^y\lambda_{b,W}(x)e(a
\psi(x)/q)\\=&\sum_{\substack{1\leq r\leq Wq\\ (r,q)=1\\ r\equiv
b\pmod{W}}}e(a\psi((r-b)/W)/q)\sum_{\substack{x\in \Lambda_{r,Wq}\\
Wqx+r\leq Wy+b}}\frac{\phi(W)q}{\phi(Wq)}\lambda_{r,Wq}(x).
\end{align*}
The well-known Siegel-Walfisz theorem (cf.
\cite{Davenport00}) asserts that
$$
\sum_{\substack{p\leq y\text{ is prime}\\ p\equiv b\pmod{q}}}\log
p=\frac{y}{\phi(q)}+O(ye^{-c'\sqrt{\log y} })
$$
provided that $q\leq\log^{c_1} y$, where $c_1,\,c'$ are  positive
constants. Hence
$$
\sum_{\substack{x\in \Lambda_{r,Wq}\\ Wqx+r\leq
Wy+b}}\lambda_{r,Wq}(x)=\frac{y}{q}+O(Wye^{-c'\sqrt{\log (Wy)}}).
$$
It follows that
$$
F_y(a/q)=\frac{\phi(W)y}{\phi(Wq)}\sum_{\substack{1\leq r\leq q\\
(Wr+b,q)=1}}e(a\psi(r)/q)+O(ye^{-c'\sqrt{\log y}/2}).
$$
Let $\theta=\alpha-a/q$. Then
\begin{align*}
&\sum_{x=1}^Nh(x)\lambda_{b,W}(x)e(\alpha\psi(x))\\
=&h(N)e(\theta\psi(N))F_{N}(a/q)
-\sum_{y=1}^{N-1}(h(y+1)e(\theta \psi(y+1))-h(y)e(\theta \psi(y)))F_y(a/q)\\
=& \frac{\phi(W)}{\phi(Wq)}\sum_{\substack{1\leq r\leq q\\
(Wr+b,q)=1}}e(a\psi(r)/q) \sum_{y=1}^{N}h(y)e(\theta
\psi(y))+O(h(N)Ne^{-c'\sqrt{\log N}/3})
\end{align*}
by noting that
$$
h(y+1)e(\theta \psi(y+1))-h(y)e(\theta
\psi(y))=O(h^\Delta(y))+O(h(y)\theta\psi^\Delta(y+1)).
$$
\end{proof}
\begin{Lem}
\label{polydelta} For any $\theta\in\T$,
$$
\sum_{x=1}^N\psi^\Delta(x-1)e(\theta\psi(x))=\sum_{x=1}^{\psi(N)}e(\theta
x)+O(\theta\psi(N)\psi^\Delta(N)).
$$
\end{Lem}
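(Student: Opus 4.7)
The plan is to view both sides of the claimed identity as approximations to the auxiliary integral $\int_0^{\psi(N)}e(\theta u)\,du$ and bound each discrepancy separately. This way there is no need to worry about the monotonicity or injectivity of $\psi$ on $[1,N]$.

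For the left-hand side, I would observe the trivial oriented identity
$$
\psi^\Delta(x-1)\,e(\theta\psi(x))=\int_{\psi(x-1)}^{\psi(x)}e(\theta\psi(x))\,du
$$
valid for each $1\le x\le N$ regardless of the sign of $\psi(x)-\psi(x-1)$. Comparing with $\int_{\psi(x-1)}^{\psi(x)}e(\theta u)\,du$ and using the mean value inequality $|e(\theta\psi(x))-e(\theta u)|\le 2\pi\theta|\psi(x)-u|\le 2\pi\theta|\psi^\Delta(x-1)|$ throughout this interval, each term differs by at most $2\pi\theta(\psi^\Delta(x-1))^2$. Summing over $x$ the true integrals telescope (using $\psi(0)=0$) to $\int_0^{\psi(N)}e(\theta u)\,du$, producing
$$
\sum_{x=1}^N\psi^\Delta(x-1)\,e(\theta\psi(x))=\int_0^{\psi(N)}e(\theta u)\,du+O\Big(\theta\sum_{x=1}^N(\psi^\Delta(x-1))^2\Big).
$$

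For the right-hand side, a standard Riemann sum bound using the same inequality gives
$$
\Big|\sum_{n=1}^{\psi(N)}e(\theta n)-\int_0^{\psi(N)}e(\theta u)\,du\Big|\le 2\pi\theta\,\psi(N),
$$
which is absorbed into the target error $O(\theta\psi(N)\psi^\Delta(N))$ since $\psi^\Delta(N)\ge 1$ for $N$ large.

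It then only remains to verify $\sum_{x=1}^N(\psi^\Delta(x-1))^2\ll\psi(N)\psi^\Delta(N)$ with an implied constant depending only on $k=\deg\psi$. Since $(\psi^\Delta)^2$ is a polynomial in $x$ of degree $2k-2$ with leading coefficient $(ka_1)^2$, summing from $1$ to $N$ gives $(ka_1)^2 N^{2k-1}/(2k-1)$ plus lower order contributions, while $\psi(N)\psi^\Delta(N)\sim ka_1^2N^{2k-1}$; the hypothesis $|a_i|\le\log N$ together with $a_1\ge 1$ ensures the lower order terms are negligible for $N$ large. This controlled bookkeeping of lower order monomials is the only mildly delicate step in the argument; every other ingredient is a single application of $|e(\theta a)-e(\theta b)|\le 2\pi\theta|a-b|$ together with telescoping, so I do not anticipate any further obstacle.
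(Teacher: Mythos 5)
Your argument is correct, and its analytic core coincides with the paper's: in both cases the whole content is that $e(\theta\cdot)$ moves by $O(\theta\,\psi^\Delta(x-1))$ across the window between $\psi(x-1)$ and $\psi(x)$, so each of the $N$ steps contributes an error $O\bigl(\theta(\psi^\Delta(x-1))^2\bigr)$, and the windows telescope via $\sum_{x\le N}\psi^\Delta(x-1)=\psi(N)$. The structural difference is how the comparison is organized. The paper compares the two sums directly, rewriting $\sum_{x=1}^{\psi(N)}e(\theta x)=\sum_{x=1}^{N}\sum_{y=0}^{\psi^\Delta(x-1)-1}e(\theta(\psi(x)-y))$ and bounding each block by $\sum_y|1-e(-\theta y)|\ll\theta(\psi^\Delta(x-1))^2$; this block decomposition tacitly uses that the increments $\psi^\Delta(x-1)$ are nonnegative integers so the blocks tile $\{1,\dots,\psi(N)\}$. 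You instead route both sides through the oriented integral $\int_0^{\psi(N)}e(\theta u)\,du$, which indeed frees you from monotonicity in the telescoping step, at the price of an extra Riemann-sum error $O(\theta\psi(N))$; absorbing it needs $\psi^\Delta(N)\ge1$, i.e.\ positive leading coefficient and $N$ large, which is anyway implicit in the statement since the upper limit $\psi(N)$ of the sum must be a positive integer (and the paper's own error term is only meaningful under the same reading). The one place where your route is heavier is the final inequality $\sum_{x\le N}(\psi^\Delta(x-1))^2\ll_k\psi(N)\psi^\Delta(N)$, which you establish by explicit leading-coefficient asymptotics using $a_1\ge1$ and $|a_i|\le\log N$; the same bound follows in one line from $\psi^\Delta(x-1)\ll_k\psi^\Delta(N)$ for $1\le x\le N$ (true for large $N$ under the section's hypotheses) combined with the telescoping identity, which is effectively what the paper does, so you could shortcut that bookkeeping. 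Net: both proofs are of the same difficulty and yield the same error term; yours is marginally more robust to signs, the paper's is marginally shorter.
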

\begin{proof} Clearly
\begin{align*}
\sum_{x=1}^N\psi^\Delta(x-1)e(\theta\psi(x))-\sum_{x=1}^{\psi(N)}e(\theta
x)
=&\sum_{x=1}^Ne(\theta\psi(x))\sum_{y=0}^{\psi^\Delta(x-1)-1}(1-e(-\theta y))\\
=&O\bigg(\sum_{x=1}^N\sum_{y=0}^{\psi^\Delta(x-1)-1}\theta y\bigg)\\
=&O(\theta\psi(N)\psi^\Delta(N)).
\end{align*}
\end{proof}
\begin{Lem} For any $\epsilon>0$,
\label{polyminor}
$$
\sum_{x=1}^Ne(\alpha\psi(x))\ll_\epsilon
N^{1+\epsilon}\bigg(\frac{a_1}{q}+\frac{a_1}{N}+\frac{q}{N^k}\bigg)^{2^{1-k}}
$$
provided that $|\alpha-a/q|\leq q^{-2}$.
\end{Lem}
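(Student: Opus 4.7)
The plan is to prove Lemma~\ref{polyminor} by the classical Weyl differencing argument, iterated $k-1$ times so as to reduce the polynomial exponential sum of degree $k$ to an estimate for a sum of geometric progressions, which can then be controlled by the Diophantine hypothesis $|\alpha-a/q|\leq q^{-2}$.

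Writing $S=\sum_{x=1}^{N}e(\alpha\psi(x))$, I would first apply the Cauchy--Schwarz inequality to obtain the standard identity
$$
|S|^{2}\leq N\sum_{|h|<N}\sum_{x}e\bigl(\alpha(\psi(x+h)-\psi(x))\bigr),
$$
where the inner sum is over a range of length $O(N)$ and $\psi(x+h)-\psi(x)$ is a polynomial of degree $k-1$ in $x$ whose leading coefficient is $k\,a_{1}\,h$. Iterating this procedure $k-1$ times, one reduces to a linear exponential sum: after collecting the lower-order fluctuations into a controlled error, one obtains
$$
|S|^{2^{k-1}}\ll N^{2^{k-1}-1}\sum_{|h_{1}|,\ldots,|h_{k-1}|<N}\min\!\bigl(N,\|k!\,a_{1}h_{1}\cdots h_{k-1}\,\alpha\|^{-1}\bigr).
$$

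The remaining step is to bound the multiple sum on the right. Setting $m=k!\,a_{1}h_{1}\cdots h_{k-1}$, the number of representations of a given $m$ is $\ll_{\epsilon}|m|^{\epsilon}$, so the sum is, up to $N^{\epsilon}$, of the form $\sum_{|m|\ll a_{1}N^{k-1}}\min(N,\|m\alpha\|^{-1})$. Splitting this range into blocks of length $q$ and using the well-known inequality
$$
\sum_{m=M+1}^{M+q}\min\!\bigl(N,\|m\alpha\|^{-1}\bigr)\ll N+q\log q
$$
(valid whenever $|\alpha-a/q|\leq q^{-2}$), I get a bound of order $N^{\epsilon}(a_{1}N^{k-1}/q+1)(N+q)$. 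Combining everything and taking $2^{1-k}$-th roots yields the claimed estimate
$$
S\ll_{\epsilon}N^{1+\epsilon}\!\left(\frac{a_{1}}{q}+\frac{a_{1}}{N}+\frac{q}{N^{k}}\right)^{2^{1-k}}.
$$

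The main obstacle I anticipate is the careful bookkeeping of the leading coefficient $a_{1}$ through the $k-1$ differencings: each differencing brings a factor like $ha_{1}$ into the coefficient of the highest surviving monomial, and these factors have to be traced accurately, because the final bound depends on Diophantine approximations to $k!\,a_{1}\alpha$ rather than to $\alpha$ itself. The hypothesis $|a_{1}|\leq\log N$ from the paragraph preceding Lemma~\ref{polymajor} is what allows these factors of $a_{1}$ to be absorbed into the $N^{\epsilon}$ losses at the end, so that the assumption $|\alpha-a/q|\leq q^{-2}$ alone suffices.
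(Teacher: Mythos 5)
Your overall route is exactly the one the paper intends (it leaves this as ``a little modification of Weyl's inequality'' in Vaughan, and its appendix Lemma on Weyl sums carries out the same differencing with the coefficient $a_1$ tracked), but your bookkeeping of powers of $N$ in the iterated differencing is wrong, and with the intermediate inequality as you state it the lemma does not follow. The first display already inserts a spurious factor $N$: squaring out gives $|S|^2=\sum_{|h|<N}\sum_{x}e(\alpha(\psi(x+h)-\psi(x)))$ with no Cauchy--Schwarz and no factor $N$; Cauchy--Schwarz enters only from the second differencing on, and the standard iteration (Vaughan, Lemma 2.3; compare the factor $(2V)^{2^j-j-1}$ in the paper's appendix) yields
$$
|S|^{2^{k-1}}\ll N^{2^{k-1}-k}\bigg(N^{k-1}+\sum_{\substack{|h_1|,\ldots,|h_{k-1}|<N\\ h_1\cdots h_{k-1}\neq0}}\min\{N,\|k!\,a_1h_1\cdots h_{k-1}\alpha\|^{-1}\}\bigg),
$$
with exponent $2^{k-1}-k$, not $2^{k-1}-1$. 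If you combine your stated prefactor $N^{2^{k-1}-1}$ with your (correct) bound $N^{\epsilon}(a_1N^{k-1}/q+1)(N+q\log q)$ for the multiple sum, you obtain only
$$
|S|\ll N^{1+\epsilon}\bigg(\frac{a_1N^{k-1}}{q}+a_1N^{k-2}+1+\frac{q}{N}\bigg)^{2^{1-k}},
$$
which is weaker than the claimed estimate by roughly $N^{(k-1)2^{1-k}}$; so ``combining everything and taking $2^{1-k}$-th roots'' does not close the proof as written.

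With the correct prefactor the rest of your plan does work: the divisor bound for $m=k!a_1h_1\cdots h_{k-1}$, then Vaughan's Lemma 2.2 applied to $\alpha$ over the range $|m|\ll a_1N^{k-1}$, gives $N^{2^{k-1}-k+\epsilon}(N^{k-1}+a_1N^{k}/q+a_1N^{k-1}+N+q\log q)$, and since $a_1$ is a positive integer the terms $N^{-1}$ and $N^{1-k}$ are absorbed into $a_1/N$, yielding precisely $N^{1+\epsilon}(a_1/q+a_1/N+q/N^{k})^{2^{1-k}}$. One small correction to your closing remark: $a_1$ is not absorbed into the $N^{\epsilon}$ loss (nor is $|a_1|\leq\log N$ really needed here); it survives explicitly in the final bound through the length $\asymp a_1N^{k-1}$ of the range of $m$, which is why the lemma reads $a_1/q+a_1/N+q/N^{k}$ rather than the usual $1/q+1/N+q/N^{k}$.
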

\begin{proof} We left the proof of Lemma \ref{polyminor} as
an exercise for the readers,
since it is just a little modification of the proof
of Wely's inequality \cite[Lemma 2.4]{Vaughan97}.
\end{proof}

\begin{Lem}[Hua]
\label{huasum} Suppose that $(q,a_1,\ldots,a_k)=1$. Then
$$
\sum_{r=1}^qe(\psi(r)/q)\ll_\epsilon q^{1-\frac{1}{k}+\epsilon}
$$
for any $\epsilon>0$.
\end{Lem}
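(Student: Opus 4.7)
The plan is to follow Hua's classical argument in two stages: first reduce from general $q$ to prime powers via multiplicativity, then handle each prime power by a Taylor-expansion descent on the modulus.

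For the reduction, if $q = q_1 q_2$ with $(q_1, q_2) = 1$, write every residue $r \pmod q$ uniquely as $r \equiv q_2 r_1 + q_1 r_2 \pmod q$ with $r_i$ ranging modulo $q_i$. Because $\psi(0) = 0$, every mixed term in the binomial expansion of $\psi(q_2 r_1 + q_1 r_2)$ contains both $q_1$ and $q_2$, yielding
\[
\psi(q_2 r_1 + q_1 r_2) \equiv \psi(q_2 r_1) + \psi(q_1 r_2) \pmod{q_1 q_2}.
\]
This gives the multiplicative splitting
\[
\sum_{r=1}^q e(\psi(r)/q) = \bigg(\sum_{r_1=1}^{q_1} e(\psi(q_2 r_1)/q)\bigg)\bigg(\sum_{r_2=1}^{q_2} e(\psi(q_1 r_2)/q)\bigg),
\]
and each factor is a complete sum modulo $q_i$ of a polynomial whose coefficients still obey the coprimality hypothesis with $q_i$. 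It therefore suffices to prove the bound when $q = p^s$.

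For $q = p$ prime with $p \nmid (a_1, \ldots, a_k)$, Weil's theorem yields $|\sum_{r=1}^p e(\psi(r)/p)| \leq (k-1)\, p^{1/2}$, which is far stronger than required. For $q = p^s$ with $s \geq 2$, set $a = \lceil s/2 \rceil$, $b = \lfloor s/2 \rfloor$, and decompose $r = u + p^a v$ with $u$ modulo $p^a$ and $v$ modulo $p^b$. Since $2a \geq s$, every higher-order term in the Taylor expansion of $\psi(u + p^a v)$ carries a factor of $p^{2a}$ and vanishes mod $p^s$, so
\[
\psi(u + p^a v) \equiv \psi(u) + p^a v\,\psi'(u) \pmod{p^s}.
\]
The inner sum over $v$ then equals $p^b$ when $p^b \mid \psi'(u)$ and vanishes otherwise, giving
\[
\bigg|\sum_{r=1}^{p^s} e(\psi(r)/p^s)\bigg| \leq p^b \cdot \#\bigl\{u \pmod{p^a} : p^b \mid \psi'(u)\bigr\}.
\]
A standard estimate on solutions of polynomial congruences of degree $k-1$ modulo prime powers bounds the count on the right by $O_k(p^{a - b/(k-1) + \epsilon s})$, and iterating this halving descent sharpens the exponent to the desired $1 - 1/k + \epsilon$.

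The main obstacle is the small-prime case $p \leq k$: differentiation may introduce extra factors of $p$ in the coefficients of $\psi'$, potentially leaving $\psi'$ entirely trivial modulo $p$ even under the hypothesis on $\psi$. Handling this requires further descent to $\psi''$, $\psi'''$, and so on, with careful bookkeeping to ensure the combined loss from the finitely many exceptional primes is absorbable into the $q^\epsilon$ factor.
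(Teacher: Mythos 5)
The paper itself gives no argument here: it quotes Hua's classical bound and cites Vaughan, \emph{The Hardy--Littlewood Method}, Theorem 7.1, so your proposal has to stand on its own as a proof. Its first two stages are fine: the multiplicativity reduction is correct (with $(q_1,q_2)=1$ the transformed coefficients $a_jq_2^{k-j}$ remain coprime to $q_1$, so the hypothesis is inherited), and for primes $p>k$ the Weil bound applies legitimately (for $p\le k$ the invocation of Weil is not justified --- the reduction of $\psi$ mod $p$ can be of Artin--Schreier type, e.g.\ $x^2+x$ mod $2$ --- but those finitely many primes are harmless at the level of a single prime modulus).

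The genuine gap is in the prime-power case, which is where all the difficulty of Hua's lemma lives. After writing $r=u+p^av$ with $a=\lceil s/2\rceil$, $b=\lfloor s/2\rfloor$, you pass to absolute values and bound the sum by $p^b\cdot\#\{u \bmod p^a:\ p^b\mid\psi'(u)\}$. Even with the sharp root-count $O_k(p^{a-b/(k-1)})$ this yields only $p^{s(1-\frac{1}{2(k-1)})}$, which for $k\ge 3$ is strictly weaker than the required $p^{s(1-\frac1k)+\epsilon s}$, and the loss is intrinsic to this step: for $\psi(x)=x^k$ both inequalities you use are essentially tight, while the true size is about $p^{s(1-1/k)}$; the missing saving comes from the oscillation of $e(\psi(u)/p^s)$ over the solution set, which you have discarded by replacing the restricted sum $p^b\sum_{u:\,p^b\mid\psi'(u)}e(\psi(u)/p^s)$ by a count. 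For the same reason the proposed rescue, ``iterating this halving descent,'' is not available as stated --- after taking absolute values there is no exponential sum left to decompose, and no choice of split $a+b=s$ compatible with the Taylor truncation $2a\ge s$ reaches the exponent $1-1/k$ by counting alone. The correct argument (Hua's, as in Vaughan Ch.~7) keeps the phases: classify $u$ by the roots of $\psi'$ modulo $p$ and their multiplicities, substitute $u=u_0+pw$, extract the resulting power of $p$ from the polynomial, and induct on $s$ over complete sums to smaller moduli, treating inside this induction the degenerate primes $p\le k$ for which $\psi'$ may acquire content divisible by $p$. That is precisely the bookkeeping your final paragraph defers, so the core of the lemma remains unproved; given that, citing Hua's theorem (as the paper does) or carrying out the full induction are the two honest options.
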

\begin{proof} See \cite[Theorem 7.1]{Vaughan97}.
\end{proof}

\begin{Lem}
\label{huapoly}
$$
\int_\T\bigg|\sum_{x=1}^N\psi^\Delta(x-1)e(\alpha\psi(x))\bigg|^\rho
d\alpha\ll_\rho\gcd(\psi)\psi(N)^{\rho-1}
$$
for $\rho\geq k2^{k+2}$, where $\gcd(\psi)$ denotes the greatest
common divisor of $a_1,\ldots,a_k$.
\end{Lem}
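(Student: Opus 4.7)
The plan is to apply the circle method, comparing $f(\alpha):=\sum_{x=1}^N\psi^\Delta(x-1)e(\alpha\psi(x))$ with the Dirichlet kernel $D_M(\alpha):=\sum_{m=1}^M e(\alpha m)$, where $M:=\psi(N)$. First I would peel off the factor $\gcd(\psi)$ in the conclusion. Writing $d:=\gcd(\psi)$ and $\psi=d\tilde\psi$ (so $\gcd(\tilde\psi)=1$), one has $f_\psi(\alpha)=d\,f_{\tilde\psi}(d\alpha)$, and the substitution $\beta=d\alpha$ combined with the $1$-periodicity of $f_{\tilde\psi}$ yields $\int_\T|f_\psi|^\rho\,d\alpha=d^\rho\int_\T|f_{\tilde\psi}|^\rho\,d\beta$. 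Hence it is enough to prove $\int_\T|f|^\rho\,d\alpha\ll\psi(N)^{\rho-1}$ under the extra assumption $\gcd(\psi)=1$, since this produces $d^\rho(\psi(N)/d)^{\rho-1}=d\,\psi(N)^{\rho-1}$ in the general case.

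Assuming $\gcd(\psi)=1$, set $K:=\psi^\Delta(N)\asymp M/N$, fix $\nu=1/3$ and $\gamma=\nu/2=1/6$, and split $\T$ via Dirichlet approximation into major arcs $\M=\bigcup_{q\leq N^\gamma,\,(a,q)=1}\{|\alpha-a/q|\leq N^\nu/(qM)\}$ and minor arcs $\m=\T\setminus\M$. On $\M$, with $\alpha=a/q+\theta$, Lemma \ref{polymajor} applied to $h(x)=\psi^\Delta(x-1)$ gives
\[
f(\alpha)=\frac{S(a,q)}{q}\,\tilde f(\theta)+O(KN^\nu),\qquad S(a,q):=\sum_{r=1}^q e(a\psi(r)/q),
\]
where $\tilde f(\theta):=\sum_x\psi^\Delta(x-1)e(\theta\psi(x))$. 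Lemma \ref{huasum} (thanks to $\gcd(\psi)=1$) yields $|S(a,q)/q|\ll q^{-1/k+\epsilon}$, and Lemma \ref{polydelta} gives $\tilde f(\theta)=D_M(\theta)+O(|\theta|MK)$. Raising the resulting expression to the $\rho$-th power and summing over $(a,q)$, the main contribution is bounded by $\bigl(\sum_{q\leq N^\gamma}q^{1-\rho/k+\rho\epsilon}\bigr)\int_\T|D_M|^\rho\,d\theta\ll M^{\rho-1}$; here I use $|D_M(\theta)|\leq\min(M,1/(2\|\theta\|))$ to integrate the Dirichlet kernel and the fact that the sum over $q$ converges for $\rho>2k$. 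The two error pieces $q^{-\rho/k}(|\theta|MK)^\rho$ and $(KN^\nu)^\rho$ integrate to $o(M^{\rho-1})$ for the chosen $\gamma,\nu$.

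On the minor arcs, the Dirichlet denominator satisfies $q>N^\gamma$; Lemma \ref{polyminor} combined with partial summation (to transfer the unweighted Weyl bound to $f$) yields
\[
\sup_{\alpha\in\m}|f(\alpha)|\ll KN^{1+\epsilon-\gamma\cdot 2^{1-k}}.
\]
Combining with the Parseval identity $\int_\T|f|^2\,d\alpha=\sum_x\psi^\Delta(x-1)^2\leq KM$ through $\int_\m|f|^\rho\leq(\sup_\m|f|)^{\rho-2}\int_\T|f|^2$ gives
\[
\int_\m|f|^\rho\,d\alpha\ll M^{\rho-1}\cdot(\log N)\cdot N^{k-1+(\rho-2)(\epsilon-\gamma\cdot 2^{1-k})},
\]
which is $\ll M^{\rho-1}$ precisely when $(\rho-2)\gamma\cdot 2^{1-k}>k-1+(\rho-2)\epsilon$. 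For $\gamma=1/6$ and $\epsilon$ small enough this is comfortably ensured by $\rho\geq k\,2^{k+2}$, matching the hypothesis.

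The main obstacle is the minor-arc estimate: since $\int_\T|f|^2\asymp KM\asymp M^{2-1/k}$ is far larger than the target $M^{2-1}=M$, Parseval alone is useless, and one must extract enough Weyl cancellation on $\m$ to overcome the factor $M/K\asymp N/k$. This is what forces $\rho$ to be of order at least $k\,2^k$, so the threshold $\rho\geq k\,2^{k+2}$ in the statement is essentially the cost of the Weyl-style input.
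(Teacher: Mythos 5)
Your proposal is correct and takes essentially the same route as the paper: the same rescaling reduction to $\gcd(\psi)=1$, the same major/minor arc dissection, Lemma \ref{polymajor} combined with Lemma \ref{polydelta} and Hua's bound (Lemma \ref{huasum}) on the major arcs, and the Weyl-type Lemma \ref{polyminor} transferred by partial summation on the minor arcs. The only cosmetic differences are that the paper finishes the minor arcs by raising the sup-bound to the full power $\rho$ (so Parseval is not needed) and treats the major arcs by first dropping to an even moment $2L=2\lfloor\rho/2\rfloor$ and counting solutions of a linear equation rather than integrating the $\rho$-th power of the Dirichlet kernel directly.
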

\begin{proof}
Notice that
\begin{align*}
\int_0^1\bigg|\sum_{x=1}^N(a\psi)^\Delta(x-1)e(\alpha
a\psi(x))\bigg|^\rho d\alpha
=&a^{\rho-1}\int_0^a\bigg|\sum_{x=1}^N\psi^\Delta(x-1)e(\alpha \psi(x))\bigg|^\rho d\alpha\\
=&a^{\rho}\int_0^1\bigg|\sum_{x=1}^N\psi^\Delta(x-1)e(\alpha
\psi(x))\bigg|^\rho d\alpha.
\end{align*}
So without loss of generality, we may assume that $\gcd(\psi)=1$.
Let $\nu=1/5$ and $\epsilon=2^{-k}\nu-\frac{k}{2\rho}$. Let
$$
\M_{a,q}=\{\alpha\in\T:\,|\alpha q-a|\leq N^\nu/\psi(N)\},
\quad\M=\bigcup_{\substack{1\leq a\leq q\leq
N^\nu\\(a,q)=1}}\M_{a,q}
$$
and $\m=\T\setminus\M$. Clearly $\mes(\M)\leq N^{3\nu}/\psi(N)$,
where $\mes(\M)$ denotes the Lebesgue measure of $\M$.

If $\alpha\in\m$, then by Lemma \ref{polyminor} we have
\begin{align*}
&\sum_{x=1}^N\psi^\Delta(x-1)e(\alpha\psi(x))\\
=&\psi^\Delta(N-1)\sum_{x=1}^Ne(\alpha\psi(x))-\sum_{y=1}^{N-1}(\psi^\Delta(y)-\psi^\Delta(y-1))\sum_{x=1}^ye(\alpha\psi(x))\\
\ll&_{\epsilon}\psi^\Delta(N)N^{1+\epsilon-2^{1-k}\nu}.
\end{align*}
Hence
\begin{align*}
\int_\m\bigg|\sum_{x=1}^N\psi^\Delta(x-1)e(\alpha\psi(x))\bigg|^\rho
d\alpha\ll_{\epsilon}\psi(N)^\rho N^{\rho(\epsilon-2^{1-k}\nu)}=o(
\psi(N)^{\rho-1}).
\end{align*}

On the other hand, when $\alpha\in\M$, by Lemmas \ref{polymajor}
and \ref{polydelta},
\begin{align*}
\sum_{x=1}^N\psi^\Delta(x-1)e(\alpha\psi(x))
=\frac{1}{q}\sum_{r=1}^qe(a\psi(r)/q)\sum_{x=1}^{\psi(N)}e((\alpha-a/q)x)+O(\psi^\Delta(N)N^{\nu}).
\end{align*}
Let $L=\floor{\rho/2}$. Obviously
\begin{align*}
\int_\M\bigg|\sum_{x=1}^N\psi^\Delta(x-1)e(\alpha\psi(x))\bigg|^{\rho}d\alpha\leq\psi(N)^{\rho-2L}\int_\M\bigg|\sum_{x=1}^N\psi^\Delta(x-1)e(\alpha\psi(x))\bigg|^{2L}d\alpha.
\end{align*}
So it suffices to show that
$$
\int_\M\bigg|\sum_{x=1}^N\psi^\Delta(x-1)e(\alpha\psi(x))\bigg|^{2L}d\alpha\ll_L\psi(N)^{2L-1}.
$$
Now
\begin{align*}
&\bigg|\sum_{x=1}^N\psi^\Delta(x-1)e(\alpha\psi(x))\bigg|^{2L}\\
=&\bigg|\frac{1}{q}\sum_{r=1}^qe(a\psi(r)/q)\sum_{x=1}^{\psi(N)}e((\alpha-a/q)x)\bigg|^{2L}+O(\psi(N)^{2L-1}\psi^\Delta(N)N^{\nu}).
\end{align*}
Hence
\begin{align*}
&\int_\M\bigg|\sum_{x=1}^N\psi^\Delta(x-1)e(\alpha\psi(x))\bigg|^{2L}d\alpha\\
=&\sum_{\substack{1\leq a\leq q\leq
N^\nu\\(a,q)=1}}\int_{\M_{a,q}}\bigg|\frac{1}{q}\sum_{r=1}^qe(a\psi(r)/q)\sum_{x=1}^{\psi(N)}e((\alpha-a/q)x)\bigg|^{2L}d\alpha\\
&+O(\psi(N)^{2L-1}\psi^\Delta(N)N^{\nu}\mes(\M)).
\end{align*}
Clearly
\begin{align*}
\int_{\M_{a,q}}\bigg|\sum_{x=1}^{\psi(N)}e((\alpha-a/q)x)\bigg|^{2L}d\alpha\leq&\int_{\T}\bigg|\sum_{x=1}^{\psi(N)}e((\alpha-a/q)x)\bigg|^{2L}d\alpha\\
=&\sum_{\substack{1\leq x_1,\ldots,x_{2L}\leq \psi(N)\\\
x_1+\cdots+x_L=x_{L+1}+\cdots+x_{2L}}}1\\
\leq&\psi(N)^{2L-1}.
\end{align*}
And by Lemma \ref{huasum},
$$
\sum_{\substack{1\leq a\leq q\leq
N^\nu\\(a,q)=1}}\bigg|\frac{1}{q}\sum_{r=1}^qe(a\psi(r)/q)\bigg|^{2L}\ll_\epsilon
\sum_{\substack{1\leq a\leq q\leq
N^\nu\\(a,q)=1}}q^{-2L(\frac{1}{k}-\epsilon)}\leq
\sum_{\substack{1\leq q\leq
N^\nu}}q^{1-2L(\frac{1}{k}-\epsilon)}=O_L(1)
$$
since $L>(\frac{1}{k}-\epsilon)^{-1}$. We are done.
\end{proof}

\begin{Lem}
\label{huasumprime}
$$
\sum_{\substack{1\leq r\leq q\\
(Wr+b,q)=1}}e(a\psi(r)/q)\ll_\epsilon\gcd(\psi)q^{1-\frac{1}{k(k+1)}+\epsilon}.
$$
\end{Lem}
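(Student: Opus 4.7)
The plan is to remove the coprimality constraint by Möbius inversion, reducing the sum to a linear combination of complete polynomial exponential sums over arithmetic progressions; each of these is then handled by Hua's bound (Lemma \ref{huasum}) after a linear change of variable.

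First I would write
$$
\mathbf{1}_{(Wr+b,q)=1}=\sum_{d\mid(Wr+b,q)}\mu(d)
$$
and exchange the order of summation to obtain
$$
\sum_{\substack{1\le r\le q\\(Wr+b,q)=1}}e(a\psi(r)/q)=\sum_{d\mid q}\mu(d)\,S_d,\qquad S_d:=\sum_{\substack{1\le r\le q\\d\mid Wr+b}}e(a\psi(r)/q).
$$
Only squarefree $d$ contribute. Because $(b,W)=1$, the congruence $Wr\equiv-b\pmod{d}$ is solvable only when $(d,W)=1$, and in that case it pins $r$ to a unique residue $r_0(d)\pmod{d}$.

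For each such $d$, I would set $q':=q/d$, substitute $r=r_0+dt$ with $0\le t<q'$, and Taylor-expand
$$
\psi(r_0+dt)=\psi(r_0)+\sum_{j=1}^{k}B_j(r_0)\,d^{j}t^{j},\qquad B_j(r_0):=\tfrac{1}{j!}\psi^{(j)}(r_0)\in\Z,\ B_k(r_0)=a_1,
$$
so that
$$
S_d=e(a\psi(r_0)/q)\sum_{t=0}^{q'-1}e\!\left(\tfrac{1}{q'}\sum_{j=1}^{k}aB_j(r_0)\,d^{j-1}t^{j}\right).
$$
This is a polynomial exponential sum of degree $k$ modulo $q'$ with leading coefficient $aa_1d^{k-1}$. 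Apply Lemma \ref{huasum} to it, first extracting the common factor $g=\gcd(aB_1(r_0),aB_2(r_0)d,\ldots,aa_1d^{k-1},q')$ so that $q'/g$ is the true primitive modulus, which gives $|S_d|\ll_\epsilon g\,(q'/g)^{1-1/k+\epsilon}$. Combining this with the trivial bound $|S_d|\le q/d$, summing over the $O_\epsilon(q^\epsilon)$ admissible squarefree $d\mid q$, and balancing the two estimates dyadically in $d$ (so that $g$ is controlled in terms of $\gcd(\psi)$ and a bounded power of $d$) produces the claimed
$$
\ll_\epsilon\gcd(\psi)\,q^{1-1/(k(k+1))+\epsilon}.
$$

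The main obstacle is the final step: the Taylor-shifted polynomial in $t$ can share a common factor as large as $|\psi'(r_0)|\sim a_1d^{k-1}$ with the modulus $q'=q/d$, which degrades Hua's primitive-modulus bound toward triviality for larger $d$. Balancing this degradation against the trivial bound across dyadic ranges of $d$ is precisely what produces the exponent $1/(k(k+1))$, rather than the sharper $1/k$ one would get under an ideal coprimality assumption on the shifted coefficients.
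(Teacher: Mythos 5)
Your proposal follows essentially the same route as the paper: M\"obius inversion over $d\mid(Wr+b,q)$, the substitution $r=r_0+dt$ turning $S_d$ into a complete degree-$k$ sum modulo $q/d$, Hua's bound (Lemma \ref{huasumprime}'s Lemma \ref{huasum}) after extracting the coefficient gcd, the trivial bound $q/d$ for large $d$, and a threshold in $d$ at about $q^{1/(k(k+1))}$. The one step you state without proof --- that the extracted gcd is at most $\gcd(\psi)$ times a bounded power of $d$ --- is exactly the successive elimination $(q,a_1',\ldots,a_k')\leq d^{k(k+1)/2}(q,a_1,\ldots,a_k)$ carried out explicitly in the paper, and it goes through as you anticipate.
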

\begin{proof}
Clearly
\begin{align*}
\sum_{\substack{1\leq r\leq q\\
(Wr+b,q)=1}}e(a\psi(r)/q)=\sum_{r=1}^qe(a\psi(r)/q)\sum_{d\mid
(Wr+b,q)}\mu(d)
\end{align*}
where $\mu$ is the M\"obius function. Note that $d\mid
(Wr+b)\Longrightarrow(d,W)=1$ since $(W,b)=1$. Hence
\begin{align*}
\sum_{\substack{1\leq r\leq q\\
(Wr+b,q)=1}}e(a\psi(r)/q)=\sum_{\substack{d\mid q\\ b_d\text{
exists}}}\mu(d)\sum_{\substack{1\leq r\leq q\\ r\equiv
b_d\pmod{d}}}e(a\psi(r)/q),
\end{align*}
where $1\leq b_d\leq d$ is the integer such that
$Wb_d+b\equiv0\pmod{d}$.

For those $d\leq q^{\frac{1}{k(k+1)}}$ with $b_d$ exists, we have
$$
\sum_{\substack{1\leq r\leq q\\ r\equiv
b_d\pmod{d}}}e(a\psi(r)/q)=\sum_{r=0}^{q/d-1}e(a\psi(dr+b_d)/q).
$$
Write
\begin{align*}
\psi(dr+b_d)=&\sum_{i=1}^ka_{k-i+1}\sum_{j=0}^i\binom{i}{j}d^jr^jb_d^{i-j}\\
=&\sum_{j=0}^kd^jr^j\sum_{i=j}^k\binom{i}{j}a_{k-i+1}b_d^{i-j}\\
=&a_1'r^k+a_2'r^{k-1}+\cdots+a_k'r+a_{k+1}'.
\end{align*}
Notice that
\begin{align*}
(q,a_1',\ldots,a_k')=(q,d^ka_1,a_2',\ldots,a_k')\leq
d^k(q,a_1,a_2',\ldots,a_k').
\end{align*}
Also
$$
a_2'=d^{k-1}(a_2+ka_1b_d).
$$
Therefore
$$
(q,a_1,a_2',\ldots,a_k')=(q,a_1,d^{k-1}a_2,\ldots,a_k')\leq
d^{k-1}(q,a_1,a_2,\ldots,a_k').
$$
Similarly, we obtain that
$$
(q,a_1',\ldots,a_k')\leq d^{\frac{k(k+1)}{2}}(q,a_1,\ldots,a_k).
$$
Thus by Lemma \ref{huasum},
\begin{align*}
\sum_{r=0}^{q/d-1}e(a\psi(dr+b_d)/q)\ll&_\epsilon(q/d,a_1',\ldots,a_k')\bigg(\frac{q/d}{(q/d,a_1',\ldots,a_k')}\bigg)^{1-\frac{1}{k}+\frac{\epsilon}{k}}\\
\leq&(q,a_1',\ldots,a_k')^{\frac{1-\epsilon}{k}}d^{\frac{1-\epsilon}{k}-1}q^{1-\frac{1-\epsilon}{k}}\\
\leq&(a_1,\ldots,a_k)^{\frac{1-\epsilon}{k}}d^{(\frac{k+1}{2}+\frac{1}{k})(1-\epsilon
)-1}q^{1-\frac{1-\epsilon}{k}}.
\end{align*}
On the other hand, clearly
$$
\bigg|\sum_{r=0}^{q/d-1}e(a\psi(dr+b_d)/q)\bigg|\leq
\frac{q}{d}<q^{1-\frac{1}{k(k+1)}}
$$
when $d>q^{\frac{1}{k(k+1)}}$.
Thus
\begin{align*}
&\bigg|\sum_{\substack{1\leq r\leq q\\ (Wr+b,q)=1}}e(a\psi(r)/q)\bigg|\\
\leq&\sum_{\substack{d\mid q,\ d\leq q^{\frac{1}{k(k+1)}}\\\text{
and }b_d\text{ exists}}}\bigg|\sum_{\substack{1\leq r\leq q\\
r\equiv b_d\pmod{d}}}e(a\psi(r)/q)\bigg|
+\sum_{\substack{d\mid q,\ d>q^{\frac{1}{k(k+1)}}\\\text{ and }b_d\text{ exists}}}\bigg|\sum_{\substack{1\leq r\leq q\\ r\equiv b_d\pmod{d}}}e(a\psi(r)/q)\bigg|\\
\ll&_\epsilon d(q)(\gcd(\psi)^{\frac{1-\epsilon}{k}}q^{1-\frac{1-\epsilon}{k}+\frac{1-\epsilon}{2(k+1)}}+q^{1-\frac{1}{k(k+1)}})\\
\ll&_\epsilon\gcd(\psi)q^{1-\frac{1}{k(k+1)}+\epsilon},
\end{align*}
where $d(q)$ is the divisor function.
\end{proof}

\begin{Lem}
\label{primepolyminor} For any  $A>0$, there is a $B=B(A,k)>0$
such that,
$$
\sum_{x=1}^N\lambda_{b,W}(x)e(\alpha\psi(x))\ll_B N(\log N)^{-A}
$$
provided that $|\alpha-a/q|\leq q^{-2}$ with $1\leq a\leq q$,
$(a,q)=1$ and $(\log N)^B\leq q\leq\psi(N)(\log N)^{-B}$.
\end{Lem}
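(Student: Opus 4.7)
The plan is to prove Lemma \ref{primepolyminor} by adapting the classical Vaughan-identity treatment of exponential sums over primes to the weighted sum defined by $\lambda_{b,W}$. The starting point is to rewrite
$$
\sum_{x=1}^N\lambda_{b,W}(x)e(\alpha\psi(x))=\frac{\phi(W)}{W}\sum_{\substack{b<n\leq WN+b\\ n\equiv b\pmod{W}}}\Lambda(n)e(\alpha\psi((n-b)/W))+O(N^{1/2}\log^2 N),
$$
so the problem reduces to a Vinogradov-type estimate for $\Lambda$ on an arithmetic progression. One then applies Vaughan's identity with parameters $U=V=(\log N)^{B_0}$ (with $B_0=B_0(A,k)$ to be fixed at the end) to decompose the right-hand side into a bounded number of Type I sums $\sum_{d\leq U^2}c_d\sum_m e(\alpha\psi(\cdots))$ and Type II bilinear sums $\sum_{d>U}\sum_{m>V}a_db_m e(\alpha\psi(\cdots))$, with coefficients bounded by $d(n)\log N$ in absolute value.

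For the Type I sums I would fix $d$ coprime to $W$ (the other residues contribute nothing), substitute $m=W\ell+m_0$ so that the inner sum runs over consecutive integers $\ell$, and observe that $\psi((dm-b)/W)$ becomes a polynomial in $\ell$ of degree $k$ whose leading coefficient equals $a_1 d^k$ up to an inessential rational factor. The effective frequency is $\alpha d^k$ up to harmless shifts, and since $|\alpha-a/q|\leq q^{-2}$, for most $d\leq U^2$ the number $\alpha d^k$ still admits a rational approximation whose denominator is sandwiched between $(\log N)^{B/2}$ and $\psi(N)(\log N)^{-B/2}$. Lemma \ref{polyminor} applied to each inner sum in $\ell$ and then summed in $d$ delivers the bound $O(N(\log N)^{-A})$, provided $B$ is taken large enough compared with $A$ and $B_0$.

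The Type II sums require Cauchy--Schwarz in the outer variable $d\sim D$:
$$
\bigg|\sum_{d\sim D}\sum_{m\sim M}a_db_me(\alpha\psi(dm^\ast))\bigg|^2\ll DM(\log N)^{O(1)}\sum_{m_1,m_2\sim M}\bigg|\sum_{d\sim D}e\bigl(\alpha[\psi(dm_1^\ast)-\psi(dm_2^\ast)]\bigr)\bigg|,
$$
where $m^\ast$ accounts for the AP condition. The diagonal $m_1=m_2$ contributes $O(DM^2)=O(NM)$, which is negligible once $V$ (hence $M$) is a sufficiently large power of $\log N$. For the off-diagonal terms the exponent is a polynomial in $d$ of degree $k$ with leading coefficient $a_1(m_1^{\ast k}-m_2^{\ast k})$, and a standard divisor-type count on those pairs $(m_1,m_2)$ for which the new frequency $\alpha\cdot a_1(m_1^{\ast k}-m_2^{\ast k})$ fails to admit a useful Dirichlet approximation allows Lemma \ref{polyminor} to be invoked on the inner sum in $d$. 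Summing all pieces and taking $B=B(A,k)$ sufficiently large forces the total below $N(\log N)^{-A}$.

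The main obstacle is the Type II step. After Cauchy--Schwarz, the ``new'' frequency is $\alpha\cdot a_1(m_1^{\ast k}-m_2^{\ast k})$ rather than $\alpha$ itself, and Lemma \ref{polyminor} only yields a useful estimate when this number has a Diophantine approximation with denominator in a suitable medium-range window. Controlling the number of pairs $(m_1,m_2)$ whose induced frequency escapes the window is where the quantitative relationship between $A$ and $B$ is pinned down, and it is precisely the upper bound $q\leq\psi(N)(\log N)^{-B}$ that keeps the Dirichlet denominator inside the regime where Lemma \ref{polyminor} applies with a power saving.
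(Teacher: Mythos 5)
Your overall route --- replace $\lambda_{b,W}$ by $\Lambda$ on the progression $n\equiv b\pmod{W}$, apply Vaughan's identity, handle Type I sums by Weyl's inequality and Type II sums by Cauchy--Schwarz plus Weyl and a divisor count --- is exactly the strategy of the paper's appendix, so the problem is not the choice of method but that the proposal stops precisely where the real work begins, and one step you do write down is wrong as stated. (i) The Type II bookkeeping is inconsistent: if you Cauchy--Schwarz in $d\sim D$ (which is what produces an inner exponential sum over $d$), the diagonal contributes about $D^2M\asymp ND$ to $|S|^2$, not $O(DM^2)=O(NM)$; with your formula, making $M$ large would hurt rather than help. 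This is a fixable slip, but it shows the normalization has not actually been checked. (ii) The off-diagonal Type II estimate --- the step you yourself flag as ``the main obstacle'' --- is the bulk of the proof and is simply not carried out. For a fixed pair $(m_1,m_2)$ the dilated frequency $\alpha a_1(m_1^{*k}-m_2^{*k})$ may be extremely close to a rational with tiny denominator, in which case Lemma \ref{polyminor} gives nothing; one must prove that such bad pairs form at most a $(\log N)^{-2A-O(1)}$ proportion, and this requires genuine machinery: either a counting of solutions of $m_1^{*k}\equiv m_2^{*k}$ to moduli comparable with $q$, or (as the appendix does) Weyl differencing of the inner sum, the mean--value bound $\sum_{t\le T}d_{k+1}(t)^2\ll T(\log T)^{O(1)}$, Cauchy--Schwarz, and the standard estimate for $\sum_{t\le T}\min\{TD/t,\|\alpha t\|^{-1}\}$ from Lemma 2.2 of \cite{Vaughan97}; it is only there that both inequalities $(\log N)^B\le q$ and $q\le\psi(N)(\log N)^{-B}$ enter. (iii) In the Type I step, ``for most $d$'' is neither sufficient nor what one actually proves: one needs every $d\le(\log N)^{2B_0}$, and obtaining an approximation to $\alpha a_1d^k/W^k$ satisfying the hypothesis $|\beta-a'/q'|\le q'^{-2}$ with denominator pinned in the good range needs a fresh Dirichlet approximation plus a spacing argument with $B\gg kB_0$; the cleaner device, used in the appendix (Lemma \ref{welysum}), is to keep Weyl's inequality in the form $\sum_y\min\{V,\|\alpha k!a_1y\|^{-1}\}$, so the dilation is absorbed into the summation variable and the hypothesis on $q$ is only ever applied to $\alpha$ itself.

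On parameters: the paper takes $X=V^{2/5}$ in Vaughan's identity, so both Type II variables lie in $[V^{2/5},V^{3/5}]$ and the Weyl--differencing savings are robust; it then routes everything through the two--variable lemmas of the appendix (with the congruence conditions mod $W$ built in) and performs the Cauchy--Schwarz in the divisor--weighted variable. Your choice $U=V=(\log N)^{B_0}$ simplifies Type I (the sum over $d$ can be estimated trivially), but it forces the Type II analysis to cover extremely lopsided ranges in which the short variable has length only $(\log N)^{B_0}$; this is not automatically fatal, since only log--power savings are needed and $B$ may depend on $B_0$, but it makes the short--variable Weyl step and the bad--pair count more delicate --- and those are exactly the parts left unproved. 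As it stands the proposal is a plausible roadmap for the standard argument, not a proof of the lemma.
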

\begin{proof}
At least Vinogradov had dealt with the case $\psi(x)=x^k$ and
$W=1$ in \cite{Vinogradov76}. The proof of this Lemma is very
standard but too long, so we give the detailed proof as an appendix.
\end{proof}

\begin{Lem}\
\label{huaprimepoly}
$$
\int_\T\bigg|\sum_{x=1}^N\psi^\Delta(x-1)\lambda_{b,W}(x)e(\alpha\psi(x))\bigg|^\rho
d\alpha\ll_\rho\gcd(\psi)\psi(N)^{\rho-1}
$$
for $\rho\geq k2^{k+2}+1$.
\end{Lem}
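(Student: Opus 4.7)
The plan is to imitate the proof of Lemma \ref{huapoly}, replacing its polynomial exponential-sum ingredients by the prime-adapted versions: Lemma \ref{primepolymajor} in place of Lemma \ref{polymajor}, Lemma \ref{primepolyminor} in place of Lemma \ref{polyminor}, and Lemma \ref{huasumprime} in place of Lemma \ref{huasum}; moreover, I shall invoke Lemma \ref{huapoly} itself as a black box to control a high even moment of $T(\alpha):=\sum_{x=1}^N\psi^\Delta(x-1)\lambda_{b,W}(x)e(\alpha\psi(x))$. By the rescaling $\psi\mapsto a\psi$ used at the start of Lemma \ref{huapoly} (which does not affect the weight $\lambda_{b,W}$), we reduce to $\gcd(\psi)=1$. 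Fix a parameter $B=B(\rho,k)$ to be chosen large, and partition
$$
\M_{a,q}=\{\alpha\in\T:\,|\alpha q-a|\leq(\log N)^B/\psi(N)\},\quad \M=\bigcup_{\substack{1\leq a\leq q\leq(\log N)^B\\(a,q)=1}}\M_{a,q},\quad \m=\T\setminus\M,
$$
so that $\mes(\M)\ll(\log N)^{3B}/\psi(N)$.

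On the major arcs, for $\alpha=a/q+\theta\in\M_{a,q}$, Lemma \ref{primepolymajor} with $h(x)=\psi^\Delta(x-1)$ chained with Lemma \ref{polydelta} gives
$$
T(\alpha)=\frac{\phi(W)}{\phi(Wq)}\bigg(\sum_{\substack{1\leq r\leq q\\(Wr+b,q)=1}}e(a\psi(r)/q)\bigg)\sum_{y=1}^{\psi(N)}e(\theta y)+O\big(\psi(N)e^{-c\sqrt{\log N}}\big).
$$
Raising to the $\rho$th power and integrating over each $\M_{a,q}$, the $\theta$-integral is $\int_\T|\sum_{y\leq\psi(N)}e(\theta y)|^\rho\,d\theta\ll\psi(N)^{\rho-1}$ (for $\rho>1$); the Gauss-sum prefactor satisfies $|(\phi(W)/\phi(Wq))\sum_r e(a\psi(r)/q)|^\rho\ll q^{-\rho/(k(k+1))+O(\epsilon)}$ by Lemma \ref{huasumprime} combined with $\phi(Wq)\geq\phi(W)\phi(q)$; and the series $\sum_q q^{1-\rho/(k(k+1))+O(\epsilon)}$ converges because $\rho\geq k2^{k+2}+1>2k(k+1)$. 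The error is absorbed via $\mes(\M)$, giving $\int_\M|T(\alpha)|^\rho\,d\alpha\ll\psi(N)^{\rho-1}$.

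On the minor arcs, I apply the H\"older-type splitting
$$
\int_\m|T(\alpha)|^\rho\,d\alpha\leq\Big(\sup_{\alpha\in\m}|T(\alpha)|\Big)^{\rho-2L}\int_\T|T(\alpha)|^{2L}\,d\alpha
$$
with $2L:=k2^{k+2}$, so that $\rho-2L\geq 1$ and $2L$ is in the range of Lemma \ref{huapoly}. For the high moment, positivity of $\lambda_{b,W}$ and $\lambda_{b,W}(x)\leq 2\log N$ allow one to expand $|T|^{2L}$ as a weighted count of $2L$-tuples with equal $\psi$-sums, dominate each $\lambda_{b,W}(x_i)$ by $2\log N$, and drop the primality indicator to enlarge the count, giving
$$
\int_\T|T|^{2L}\,d\alpha\leq(2\log N)^{2L}\int_\T\Big|\sum_{x=1}^N\psi^\Delta(x-1)e(\alpha\psi(x))\Big|^{2L}d\alpha\ll(\log N)^{2L}\psi(N)^{2L-1}
$$
by Lemma \ref{huapoly}. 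For the supremum, partial summation expresses $T(\alpha)$ in terms of $S_y(\alpha):=\sum_{x=1}^y\lambda_{b,W}(x)e(\alpha\psi(x))$ with coefficients $\psi^\Delta(N-1)$ and $\psi^\Delta(y)-\psi^\Delta(y-1)=O(y^{k-2})$, and Lemma \ref{primepolyminor}, applied uniformly in $y$, produces $|S_y(\alpha)|\ll y(\log N)^{-A}$ for $\alpha\in\m$, hence $\sup_\m|T(\alpha)|\ll\psi(N)(\log N)^{-A}$ for any prescribed $A$ once $B$ is taken large enough in terms of $A$. Choosing $A$ so that $A(\rho-2L)>2L$ gives $\int_\m|T|^\rho\ll\psi(N)^{\rho-1}(\log N)^{2L-A(\rho-2L)}\ll\psi(N)^{\rho-1}$, completing the argument.

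The principal obstacle is the uniform-in-$y$ control $|S_y(\alpha)|\ll y(\log N)^{-A}$ required in the supremum estimate: Lemma \ref{primepolyminor} is stated only for the full sum up to $N$, and an $\alpha\in\m$ may in principle lie in a major arc at some smaller scale $y<N$. This is addressed by observing that the Vinogradov-style Type I/Type II decomposition in the appendix proof of Lemma \ref{primepolyminor}, once the Diophantine condition on $\alpha$ is imposed at scale $N$, in fact delivers the savings uniformly for every partial sum whose Dirichlet denominator at scale $N$ still lies in the admissible window at scale $y$; alternatively, one splits the partial-summation range at a threshold $y=(\log N)^C$, handling $y<(\log N)^C$ by the trivial bound $|S_y|\leq 2y\log N$ combined with the polynomial weight $O(y^{k-2})$ (contributing at most $O((\log N)^{(C+1)k-1})\ll\psi(N)(\log N)^{-A}$ for $C$ sufficiently large), and applying Lemma \ref{primepolyminor} at scale $y$ for $y\geq(\log N)^C$.
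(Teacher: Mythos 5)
Your overall architecture is the same as the paper's: reduce to $\gcd(\psi)=1$ by rescaling, split into arcs of width $(\log N)^{O(B)}/\psi(N)$, treat the minor arcs by $\int_\m|T|^\rho\leq(\sup_\m|T|)^{\rho-2L}\int_\T|T|^{2L}$ with the $2L$-moment compared (after bounding $\lambda_{b,W}\ll\log N$ and dropping primality) to Lemma \ref{huapoly}, and treat the major arcs via Lemma \ref{primepolymajor} and Lemma \ref{huasumprime}; your only real deviation is on the major arcs, where you insert Lemma \ref{polydelta} and integrate the resulting Dirichlet kernel directly rather than reusing Lemma \ref{huapoly} as the paper does, and that variant is fine. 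The genuine gap is in the minor-arc supremum, precisely at the point you flag. Your threshold $y=(\log N)^C$ does not work: for $\alpha\in\m$, Dirichlet at scale $N$ only produces a denominator $q$ with $(\log N)^{B}\leq q\leq\psi(N)(\log N)^{-B}$, and for $y$ in the intermediate range this $q$ may far exceed $\psi(y)(\log y)^{-B'}$, so the hypotheses of Lemma \ref{primepolyminor} cannot be met at scale $y$ by any admissible approximation; worse, the desired conclusion $|S_y(\alpha)|\ll y(\log N)^{-A}$ is simply false there. For instance, take $\alpha=2(\log N)^{B}/\psi(N)$, which lies in $\m$: for every $y$ with $\psi(y)\leq\psi(N)(\log N)^{-B-1}$ all phases $e(\alpha\psi(x))$, $x\leq y$, are within $o(1)$ of $1$, so $|S_y(\alpha)|\gg y$ by Siegel--Walfisz; such $y$ go up to about $N(\log N)^{-(B+1)/k}$, far above any fixed power of $\log N$. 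Enlarging $C$ does not help (it only widens the trivially bounded range; the obstruction sits above every power of $\log N$), and your alternative remark about the Type I/II decomposition only restates that the lemma works when its window condition holds at scale $y$, which is exactly what fails here.

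The repair is where the paper places the cut. Define the arcs with exponent $2B$ (so that $\alpha\in\m$ forces $(\log N)^{2B}\leq q\leq\psi(N)(\log N)^{-2B}$) and split the partial summation at $y_0=N(\log N)^{-B/k}$. For $y\geq y_0$ one has $\psi(y)(\log y)^{-B}\gg\psi(N)(\log N)^{-2B}\geq q$, so the same $(a,q)$ is admissible at scale $y$ and Lemma \ref{primepolyminor} gives $|S_y(\alpha)|\ll y(\log y)^{-A}$; for $y<y_0$ the trivial bound $|S_y(\alpha)|\ll y\log N$ together with the weight $(\psi^\Delta)^\Delta(y-1)\ll y^{k-2}\log N$ contributes $\ll N^k(\log N)^{-B+O(1)}\ll\psi(N)(\log N)^{-A}$ once $B$ is large in terms of $A$ and $k$. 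With the threshold moved there (and the arc exponent adjusted accordingly), your argument closes and coincides with the paper's proof.
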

\begin{proof}
Without loss of generality, we assume that $\gcd(\psi)=1$. Let
$B>2\rho$ be a sufficiently large integer satisfying the requirement
of Lemma \ref{primepolyminor} for $A=2\rho$. Let
$$
\M_{a,q}=\{\alpha\in\T:\,|\alpha q-a|\leq (\log
N)^{2B}/\psi(N)\}, \quad\M=\bigcup_{\substack{1\leq a\leq q\leq
(\log N)^{2B}\\(a,q)=1}}\M_{a,q}
$$
and $\m=\T\setminus\M$.

If $\alpha\in\m$, then there exist $(\log N)^{2B}\leq
q\leq\psi(N)(\log N)^{-2B}$ and $1\leq a\leq q$ with $(a,q)=1$
such that $|\alpha-a/q|\leq q^{-2}$. By Lemma \ref{primepolyminor},
\begin{align*}
&\sum_{x=1}^y\lambda_{b,W}(x)e(\alpha\psi(x))\ll_By(\log
y)^{-2\rho}.
\end{align*}
for $N(\log N)^{-\frac{B}{k}}\leq y\leq N$. Therefore
\begin{align*}
&\bigg|\sum_{x=1}^N\psi^\Delta(x-1)\lambda_{b,W}(x)e(\alpha\psi(x))\bigg|\\
=&\bigg|\psi^\Delta(N-1)\sum_{x=1}^Ne(\alpha\psi(x))\lambda_{b,W}(x)-
\sum_{y=1}^{N-1}(\psi^\Delta)^\Delta(y-1)\sum_{x=1}^ye(\alpha\psi(x))\lambda_{b,W}(x)\bigg|\\
\leq&\psi^\Delta(N-1)\bigg|\sum_{x=1}^Ne(\alpha\psi(x))\lambda_{b,W}(x)\bigg|+\sum_{1\leq y<N(\log N)^{-\frac{B}{k}}}|(\psi^\Delta)^\Delta(y-1)y|\\
&+\sum_{N(\log N)^{-\frac{B}{k}}\leq y<N}(\psi^\Delta)^\Delta(y-1)\bigg|\sum_{x=1}^ye(\alpha\psi(x))\lambda_{b,W}(x)\bigg|\\
\ll&_{B}\psi(N)(\log N)^{-2\rho}.
\end{align*}
Let $L=\floor{(\rho-1)/2}$, then we have
\begin{align*}
&\int_{\m}\bigg|\sum_{x=1}^N\psi^\Delta(x-1)\lambda_{b,W}(x)e(\alpha\psi(x))\bigg|^{\rho}d\alpha\\
\ll&_B (\psi(N)(\log
N)^{-2\rho})^{\rho-2L}\int_{\m}\bigg|\sum_{x=1}^N\psi^\Delta(x-1)\lambda_{b,W}(x)e(\alpha\psi(x))\bigg|^{2L}d\alpha\\
\ll&_L \psi(N)^{\rho-2L}(\log
N)^{-2\rho}\int_{\T}\bigg|\sum_{x=1}^N\psi^\Delta(x-1)\lambda_{b,W}(x)e(\alpha\psi(x))\bigg|^{2L}d\alpha
\end{align*}
Noting that
\begin{align*}
&\int_{\T}\bigg|\sum_{x=1}^N\psi^\Delta(x-1)\lambda_{b,W}(x)e(\alpha\psi(x))\bigg|^{2L}d\alpha\\
=&\sum_{\substack{1\leq x_1,\ldots,x_{2L}\leq
N\\\psi(x_1)+\cdots+\psi(x_L)=\psi(x_{L+1})+\cdots+\psi(x_{2L})}}
\prod_{j=1}^{2L}\psi^\Delta(x_j-1)\lambda_{b,W}(x_j)\\
\leq&(\log(WN+b))^{2L}\sum_{\substack{1\leq x_1,\ldots,x_{2L}\leq
N\\\psi(x_1)+\cdots+\psi(x_L)=\psi(x_{L+1})+\cdots+\psi(x_{2L})}}
\prod_{j=1}^{2L}\psi^\Delta(x_j-1)\\
\ll&_L(\log N)^{2L}\int_{\T}\bigg|\sum_{x\leq
N}\psi^\Delta(x-1)e(\alpha\psi(x))\bigg|^{2L}d\alpha,
\end{align*}
so using Lemma \ref{huapoly} we have
\begin{align*}
\int_{\m}\bigg|\sum_{x\leq
N}\psi^\Delta(x-1)\lambda_{b,W}(x)e(\alpha\psi(x))\bigg|^{\rho}d\alpha\ll_L\psi(N)^{\rho-1}(\log
N)^{-\rho}.
\end{align*}

If $\alpha\in\M_{a,q}$, then by Lemma \ref{primepolymajor}
\begin{align*}
&\bigg|\sum_{x\leq N}\psi^\Delta(x-1)\lambda_{b,W}(x)e(\alpha\psi(x))\bigg|^{\rho}\\
=&\bigg|\frac{\phi(W)}{\phi(Wq)}\sum_{\substack{1\leq r\leq
q\\(Wr+b,q)=1}}e(a\psi(r)/q)\sum_{x\leq
N}\psi^\Delta(x-1)e((\alpha-a/q)\psi(x))\bigg|^{\rho}\\
&+O(\psi(N)^{\rho}(\log N)^{-7B}).
\end{align*}
In view of Lemma \ref{huasumprime}, letting $\epsilon=(k+2)^{-4}$,
\begin{align*}
\sum_{\substack{1\leq a\leq q\leq (\log N)^B\\
(a,q)=1}}\bigg|\frac{\phi(W)}{\phi(Wq)}\sum_{\substack{1\leq r\leq
q\\(Wr+b,q)=1}}e(a\psi(r)/q)\bigg|^{\rho} \ll_\epsilon\sum_{1\leq
q\leq(\log
N)^B}q^{1-\rho(\frac{1}{k(k+1)}-2\epsilon)}=O_{\rho,\epsilon}(1).
\end{align*}
Applying Lemma \ref{huapoly}, we concludes that
\begin{align*}
&\int_{\M}\bigg|\sum_{x\leq N}\psi^\Delta(x-1)\lambda_{b,W}(x)e(\alpha\psi(x))\bigg|^{\rho}d\alpha\\
=&
\sum_{\substack{1\leq a\leq q\leq (\log N)^B\\
(a,q)=1}}\bigg|\frac{\phi(W)}{\phi(Wq)}\sum_{\substack{1\leq r\leq
q\\(Wr+b,q)=1}}e(a\psi(r)/q)\bigg|^{\rho}\\
&\cdot\int_{\M_{a,q}}\bigg|\sum_{x\leq
N}\psi^\Delta(x-1)e((\alpha-a/q)\psi(x))\bigg|^{\rho}d\alpha+O(\mes(\M)\psi(N)^{\rho}(\log N)^{-7B})\\
\leq&
\bigg(\sum_{\substack{1\leq a\leq q\leq (\log N)^B\\
(a,q)=1}}\bigg|\frac{\phi(W)}{\phi(Wq)}\sum_{\substack{1\leq r\leq
q\\(Wr+b,q)=1}}e(a\psi(r)/q)\bigg|^{\rho}\bigg)\\
&\cdot\int_{\T}\bigg|\sum_{x\leq
N}\psi^\Delta(x-1)e(\alpha\psi(x))\bigg|^{\rho}d\alpha+O(\psi(N)^{\rho-1}(\log N)^{-B})\\
\ll&_{\rho,\epsilon}\psi(N)^{\rho-1}.
\end{align*}
\end{proof}

\begin{Lem}
\label{primepolyrestriction}
Suppose that $\psi$ is positive and
strictly increasing on $[1,N]$. Let $p\geq\psi(N)$ be a prime. Then
$$
\frac{1}{p}\sum_{r=1}^{p}\bigg|\sum_{z=1}^N\psi^\Delta(z-1)\lambda_{b,W}(z)e(-r\psi(z)/p)\bigg|^\rho
\ll_\rho\gcd(\psi)\psi(N)^{\rho-1}
$$
for $\rho\geq k2^{k+2}+1$.
\end{Lem}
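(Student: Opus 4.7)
The plan is to discretize the $L^\rho$ integral of Lemma~\ref{huaprimepoly} by invoking the orthogonality of the characters $r\mapsto e(rM/p)$ on $\Z/p\Z$. Set
\begin{equation*}
F(\alpha)=\sum_{z=1}^N\psi^\Delta(z-1)\lambda_{b,W}(z)e(-\alpha\psi(z)),
\end{equation*}
so that the target quantity is $p^{-1}\sum_{r=1}^{p}|F(r/p)|^\rho$. Choose an even integer $2L\leq\rho$ with $2L\geq k2^{k+2}+1$ (a choice permitted by the hypothesis on $\rho$, with a trivial adjustment at the boundary).

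The key identity, obtained by expanding $|F(r/p)|^{2L}=F(r/p)^L\overline{F(r/p)}^L$ and applying $p^{-1}\sum_{r=1}^{p}e(rM/p)=\1_{p\mid M}$, reads
\begin{equation*}
\frac{1}{p}\sum_{r=1}^{p}|F(r/p)|^{2L}=\sum_{\substack{z_1,\ldots,z_{2L}\in[1,N]\\ \sum_{j\leq L}\psi(z_j)\equiv\sum_{j>L}\psi(z_j)\pmod{p}}}\prod_{j=1}^{2L}\psi^\Delta(z_j-1)\lambda_{b,W}(z_j).
\end{equation*}
Since $\psi$ is positive and strictly increasing on $[1,N]$ with $\psi(N)\leq p$, the difference $\sum_{j\leq L}\psi(z_j)-\sum_{j>L}\psi(z_j)$ lies in $[-Lp,Lp]$, and the congruence is equivalent to this difference equalling $mp$ for some integer $m$ with $|m|\leq L$. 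The contribution of each fixed $m$ is the Fourier coefficient $\int_\T|F(\alpha)|^{2L}e(mp\alpha)d\alpha$, bounded in absolute value by $\int_\T|F|^{2L}d\alpha$. Consequently, by Lemma~\ref{huaprimepoly},
\begin{equation*}
\frac{1}{p}\sum_{r=1}^{p}|F(r/p)|^{2L}\leq(2L+1)\int_\T|F|^{2L}d\alpha\ll_L\gcd(\psi)\psi(N)^{2L-1}.
\end{equation*}

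To pass from the $2L$-th moment to the $\rho$-th, I would establish the uniform bound $\sup_\alpha|F(\alpha)|\ll\psi(N)$ using partial summation together with the Siegel-Walfisz theorem (which gives $\sum_{z\leq N}\psi^\Delta(z-1)\lambda_{b,W}(z)=\psi(N)+O(\psi(N)e^{-c\sqrt{\log N}})$), then apply $|F(r/p)|^\rho\leq(\sup|F|)^{\rho-2L}|F(r/p)|^{2L}$ and sum over $r$ to conclude. The principal obstacle is the orthogonality step: the hypothesis $p\geq\psi(N)$ is precisely strong enough to reduce the relevant residue classes to the $2L+1$ values $\{mp:|m|\leq L\}$, each dominated by the zeroth Fourier coefficient of $|F|^{2L}$. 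The secondary issue is the sup-norm bound $\sup|F|\ll\psi(N)$: the crude estimate $\lambda_{b,W}(z)\ll\log N$ would introduce an unwanted factor $(\log N)^{\rho-2L}$ into the final bound, so Siegel-Walfisz is essential in order to absorb the $\lambda_{b,W}$ weight against $\psi^\Delta$ without any logarithmic loss.
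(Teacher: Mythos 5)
Your main device --- expanding the even moment, using orthogonality of $r\mapsto e(rM/p)$ on $\Z/p\Z$, and using $p\geq\psi(N)$ to confine the difference $\sum_{j\leq L}\psi(z_j)-\sum_{j>L}\psi(z_j)$ to the $2L+1$ multiples $mp$ with $|m|\leq L$, each of whose contributions is (all weights being nonnegative) at most $\int_\T|F(\alpha)|^{2L}d\alpha$ --- is sound, and it is genuinely different from the paper's route: the paper defines a function $f$ on $\Z_p$ supported on the residues $\psi(z)$ (this is where positivity, strict monotonicity and $p\geq\psi(N)$ enter) and quotes the Marcinkiewicz--Zygmund inequality $\sum_{r}|\hat f(r/p)|^\rho\ll_\rho p\int_\T|\hat f(\theta)|^\rho d\theta$, which holds for every real $\rho>2$, and then applies Lemma~\ref{huaprimepoly} at the exponent $\rho$ itself. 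Your computation is in effect an elementary proof of that transference inequality in the special case of an even exponent and nonnegative coefficients; combined with the sup bound $\sup_\alpha|F(\alpha)|\leq\sum_{z\leq N}\psi^\Delta(z-1)\lambda_{b,W}(z)\ll\psi(N)$ (Siegel--Walfisz is fine here, though only an upper bound of the right order is needed, so Brun--Titchmarsh or a Chebyshev-type bound already suffices), it yields the stated estimate whenever an admissible even $2L$ exists.

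That is the one genuine issue: since $k2^{k+2}$ is even, $k2^{k+2}+1$ is odd, so for $k2^{k+2}+1\leq\rho<k2^{k+2}+2$ there is \emph{no} even integer $2L$ with $k2^{k+2}+1\leq 2L\leq\rho$, and the ``trivial adjustment at the boundary'' you invoke does not exist inside your framework. Taking $2L=k2^{k+2}$ instead forces you to estimate $\int_\T|F|^{2L}d\alpha$ through the crude bound $\lambda_{b,W}\ll\log N$ together with Lemma~\ref{huapoly}, which costs a factor $(\log N)^{2L}$; and, unlike in the proof of Lemma~\ref{huaprimepoly}, no compensating minor-arc saving is available, because the discrete frequency set $\{r/p\}$ contains points (e.g.\ $r=p$) where $|F|\asymp\psi(N)$, so the factor $(\sup_r|F(r/p)|)^{\rho-2L}$ returns a full power of $\psi(N)$. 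Interpolating between the $k2^{k+2}$-th and $(k2^{k+2}+2)$-th discrete moments leaves the same logarithmic loss, and comparing $F(r/p)$ with nearby $F(\theta)$ via the derivative bound $\sup|F'|\ll\psi(N)^2$ fails because $p$ may be only a bounded multiple of $\psi(N)$. So your argument proves the lemma for all $\rho\geq k2^{k+2}+2$ --- which covers every use made of it later, where $\rho=k2^{k+3}$ --- but not the full stated range; for the boundary range one should either argue as the paper does with the Marcinkiewicz--Zygmund inequality, or redo the major/minor-arc analysis of Lemma~\ref{huaprimepoly} directly for the discrete average.
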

\begin{proof}
We require a well-known result of Marcinkiewicz and Zygmund (cf.
\cite[Lemma 6.5]{Green05}):
$$
\sum_{r\in\Z_{p}}\bigg|\sum_{x=1}^{p}f(x)e(-xr/p)\bigg|^\rho\ll_\rho
p\int_\T |\hat{f}(\theta)|^\rho d\theta
$$
for arbitrary function $f:\,\Z_{p}=\Z/p\Z\to\C$,  where
$$
\hat{f}(\theta)=\sum_{x=1}^{p}f(x)e(-\theta x).
$$
Define
$$
f(x)=\begin{cases} \psi^\Delta(z-1)\lambda_{b,
W}(z)&\text{if }x=\psi(z)\text{ where }1\leq z\leq N,\\
0&\text{otherwise}.
\end{cases}
$$
Then
\begin{align*}
&\sum_{r\in\Z_{p}}\bigg|\sum_{z=1}^N\psi^\Delta(z-1)\lambda_{b,W}(z)e(-\psi(z)r/{p})\bigg|^\rho\\
=&\sum_{r\in\Z_{p}}\bigg|\sum_{x=1}^{p}f(x)e(-xr/{p})\bigg|^\rho\\
\ll&_\rho{p}\int_\T\bigg|\sum_{x=1}^{p}f(x)e(-x\theta)\bigg|^\rho d\theta\\
=&{p}\int_\T\bigg|\sum_{z=1}^N\psi^\Delta(z-1)\lambda_{b,W}(z)e(-\psi(z)\theta)\bigg|^\rho d\theta\\
\ll&_\rho\gcd(\psi){p}\psi(N)^{\rho-1},
\end{align*}
where Lemma \ref{huaprimepoly} is applied in the last inequality.
\end{proof}

\section{Proof of Theorem \ref{diffprimepoly}}
\setcounter{Lem}{0}\setcounter{Thm}{0}\setcounter{Cor}{0}
\setcounter{equation}{0}

Clearly Theorem \ref{diffprimepoly} is a consequence of the
following theorem:
\begin{Thm}
\label{diffprimepoly2} Suppose that $k\geq t\geq 1$ are integers,
$a_{k-t+1}$ is a non-zero integer and  $0<\delta\leq 1$. Let
$\psi(x)=a_1x^k+a_2x^{k-1}+\cdots+a_{k-t+1}x^t$ be an arbitrary
polynomial with integral coefficients and positive leading
coefficient. Then for any positive integer $W$, there exist
$N(\delta,W,\psi)$ and $c(\delta,a_{k-t+1})>0$ satisfying that
$$
\min_{\substack{A\subseteq\{1,2,\ldots,n\}\\|A|\geq\delta
n}}|\{(x,y,z):\, x,y\in A, z\in\Lambda_{1,W}, x-y=\psi(z)\}|\geq
c(\delta,a_{k-t+1})\frac{Wn^{1+\frac{1}{k}}a_1^{-\frac{1}{k}}}{\phi(W)\log
n}
$$
if $n\geq N(\delta,W,\psi)$.
\end{Thm}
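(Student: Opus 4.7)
The plan is to estimate, via the Hardy--Littlewood circle method, the weighted count
\[
T(A) = \sum_{\substack{x,y\in A,\ 1\leq z\leq M\\ x-y=\psi(z)}} \psi^\Delta(z-1)\lambda_{1,W}(z),
\]
with $M = \floor{(n/(2a_1))^{1/k}}$ chosen so that $\psi(z)\leq n$ for all $1\leq z\leq M$ (once $n$ is large compared with the lower-order coefficients of $\psi$). The weight $\psi^\Delta(z-1)\lambda_{1,W}(z)$ simultaneously confines $z$ to $\Lambda_{1,W}$ and, via Lemma \ref{polydelta}, effectively replaces the image measure of $\psi$ on $[1,\psi(M)]$ by Lebesgue measure. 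Fourier inversion gives $T(A)=\int_\T|\hat{1}_A(\alpha)|^2\overline{S(\alpha)}\,d\alpha$, where $S(\alpha)=\sum_{z\leq M}\psi^\Delta(z-1)\lambda_{1,W}(z)e(\alpha\psi(z))$ and $\hat{1}_A(\alpha)=\sum_{x\in A}e(\alpha x)$.

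I would then decompose $\T=\M\cup\m$ with $\M$ the union of major arcs $\M_{a,q}$ for $q\leq(\log n)^B$. On the minor arcs, applying Lemma \ref{primepolyminor} together with Abel summation (exactly as in the first half of the proof of Lemma \ref{huaprimepoly}) yields $\sup_{\alpha\in\m}|S(\alpha)|\ll n(\log n)^{-A}$ for any prescribed $A$; combined with Parseval $\int_\T|\hat{1}_A|^2=|A|\leq n$, this bounds the minor arc contribution by $O(n^2(\log n)^{-A})$. On each $\M_{a,q}$, Lemma \ref{primepolymajor} factors $S(\alpha)\approx\frac{\phi(W)}{\phi(Wq)}\mathfrak{S}(a,q)\,\tilde S(\alpha-a/q)$, where $\mathfrak{S}(a,q)=\sum_{(Wr+1,q)=1}e(a\psi(r)/q)$; by Lemma \ref{polydelta}, $\tilde S(\theta)=\sum_{z\leq M}\psi^\Delta(z-1)e(\theta\psi(z))$ behaves like the Dirichlet kernel $\sum_{m\leq\psi(M)}e(\theta m)$.

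The $q=1$ major arc, through this Dirichlet kernel concentration and a Parseval-type identity, recovers the naive pair count and contributes $\gg\delta^2 n^2$. For $q\geq 2$, Lemma \ref{huasumprime} gives $|\mathfrak{S}(a,q)|\ll q^{1-1/(k(k+1))+\epsilon}$; after dividing by $\phi(Wq)$ and summing over $(a,q)$ the tail converges to a constant depending only on $a_{k-t+1}$, and the hypothesis $a_{k-t+1}\neq 0$ prevents the local densities at small primes from conspiring to cancel the $q=1$ contribution. Hence $T(A)\gg c(\delta,a_{k-t+1})\,n^2$ for $n$ large. Each triple contributes at most $\psi^\Delta(M-1)\lambda_{1,W}(M)\ll a_1 M^{k-1}\cdot\frac{\phi(W)}{W}\log n$ of weight, so dividing yields the claimed bound
\[
\gg c(\delta,a_{k-t+1})\frac{Wn^{1+1/k}}{a_1^{1/k}\phi(W)\log n}.
\]

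The principal obstacle is the major arc analysis: one must verify that the aggregate singular series $\sum_q\frac{\phi(W)}{\phi(Wq)}\sum_{(a,q)=1}\mathfrak{S}(a,q)$ stays positive uniformly, and that degenerate contributions (notably the solutions with $\psi(z)=0$, which force $x=y$) amount to only $O(\delta n\log n)$ and can be absorbed into the error. The remaining estimates are routine applications of the exponential-sum lemmas from Section 2.
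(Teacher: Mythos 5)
Your outline is the direct circle-method argument, and the step you defer to the end (``verify that the aggregate singular series stays positive uniformly'') is not a routine verification: it is exactly the point where this approach breaks down for an arbitrary dense set $A$, and it is the step the paper is structured to avoid. The difficulty is that on a major arc $\M_{a,q}$ with $2\leq q\leq Q$ the quantity multiplying $\frac{\phi(W)}{\phi(Wq)}\mathfrak{S}(a,q)$ is an integral of $|\sum_{x\in A}e(\alpha x)|^2$ against the Dirichlet-type kernel, and this factor is not small and has no fixed sign relative to the $q=1$ term: if, say, $A\subseteq q_0\Z\cap[1,n]$ then $|\sum_{x\in A}e(ax/q_0)|=|A|$ for every $a$, so the $q=q_0$ arcs contribute on the same order $\delta^2n^2$ as the $q=1$ main term, with a phase you cannot control without structural information about $A$. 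The decay $\mathfrak{S}(a,q)\ll q^{1-1/(k(k+1))+\epsilon}$ from Lemma \ref{huasumprime} only helps once $q$ is large (that is how the paper handles $q>Q$), but for the finitely many small moduli $2\leq q\leq Q$ no uniform positivity statement of the kind you need is true pointwise in $A$; the ``local densities'' are densities of $A$ along residue classes, which for a general $A$ of density $\delta$ can be anything. So the central lower bound $T(A)\gg c(\delta,a_{k-t+1})n^2$ does not follow from the estimates you cite.

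The paper's proof of Theorem \ref{diffprimepoly2} resolves precisely this obstruction by an induction on the density $\delta$ (a Sárközy/Tao-style increment argument) rather than a singular-series analysis. The dichotomy is: either there is some $q\leq Q$ for which $A$ has density at least $\delta+\epsilon$ on at least $\epsilon n$ of the progressions $\A_m(b,q^t)$, in which case one applies the induction hypothesis to the rescaled polynomial $\psi_q(x)=\psi(qx)/q^t$ (whose last nonzero coefficient is still $a_{k-t+1}$, which is why the constant depends only on $\delta$ and $a_{k-t+1}$) and the modulus $Wq$; or else assumption (\ref{assumption}) holds, and then one shows $|\sum_{x\leq n}\1_A(x)e(\alpha x)-\delta\sum_{x\leq n}e(\alpha x)|\leq16\epsilon n$ on every arc with $q\leq Q$, so the count can be compared with the ``random'' model $\delta\1_{[1,n]}$: the main term is the full-torus integral $\delta^2\int_\T|\sum_{x\leq n}e(\alpha x)|^2 S(\alpha)\,d\alpha\geq k\delta^2n^2/4^{k+1}$ (no isolation of the $q=1$ arc is needed, since this integral is a genuine nonnegative count), while the errors are controlled by Lemma \ref{huasumprime} and the kernel bound on $\m$, and by H\"older together with the moment estimate of Lemma \ref{huaprimepoly} on $\M$, with $\epsilon$ and $Q$ finally chosen as explicit functions of $\delta$ and $a_{k-t+1}$. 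Your minor-arc treatment (Lemma \ref{primepolyminor} plus Parseval) and your use of Lemmas \ref{primepolymajor}, \ref{polydelta}, \ref{huasumprime} match the paper's toolkit, but without the density-increment dichotomy the argument has a genuine gap at its decisive step.
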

\begin{Rem}
We emphasize that in Theorem \ref{diffprimepoly2} the constant $c(\delta,a_{k-t+1})$ only depends on $k, \delta, a_{k-t+1}$. As we will see later,
this fact is important in the proof of Theorem \ref{primediffprimepoly}.
\end{Rem}
\begin{proof}
Similarly as Tao's arguments \cite{Tao} on Roth's theorem
\cite{Roth53}, we shall make an induction on $\delta$. Suppose
that $P(\delta)$ is a proposition on $0<\delta\leq 1$. Assume that
$P(\delta)$ satisfies the following conditions:

\medskip\noindent (i) There exists $0<\delta_0<1$ such that
$P(\delta)$ holds for any $\delta_0\leq\delta\leq1$.

\medskip\noindent (ii) There exists a continuous function $\epsilon(\delta)>0$ such that
$\delta+\epsilon(\delta)\leq 1$ for any $0<\delta\leq\delta_0$ and
$P(\delta+\epsilon(\delta))$ holds implies $P(\delta)$ also holds.

\medskip\noindent (iii) If $0<\delta'<\delta\leq 1$, then
$P(\delta')$ holds implies that $P(\delta)$ also holds.

\medskip\noindent
Then we claim that $P(\delta)$ holds for any $0<\delta\leq1$. In
fact, assume on the contrary that there exists $0<\delta\leq 1$
such that $P(\delta)$ doesn't hold. Let
$$
\delta^*=\limsup_{\substack{0<\delta\leq 1\\ P(\delta)\text{
doesn't hold}}}\delta.
$$
From the condition (i), we know that $\delta^*\leq\delta_0$. Since
$\delta+\epsilon(\delta)$ is continuous, there exists
$0<\delta_1<\delta^*$ such that
$$
|(\delta^*+\epsilon(\delta^*))-(\delta_1+\epsilon(\delta_1))|<\frac{1}{2}\epsilon(\delta^*),
$$
i.e., $0<\delta_1<\delta^*<\delta_1+\epsilon(\delta_1)\leq 1$. Hence
$P(\delta_1+\epsilon(\delta_1))$ holds but $P(\delta_1)$ doesn't
hold by the definition of $\delta^*$. This is obviously leads to a
contradiction with the conditions (ii) and (iii).

Suppose that $A$ is subset of $\{1,2,\ldots,n\}$ with
$|A|\geq\delta n$. Firstly, we shall show that Theorem
\ref{diffprimepoly2} holds for $\delta\geq 3/4$. Define
$$
r_{W,\psi}(A)=|\{(x,y,z):\, x,y\in A, z\in\Lambda_{1,W},
x-y=\psi(z)\}|.
$$
Clearly
$$
|\{z\in\Lambda_{1,W}:\,1\leq\psi(z)\leq n/3\}|\geq
\frac{1}{4k}\frac{Wn^{\frac{1}{k}}a_1^{-\frac{1}{k}}}{\phi(W)\log
n},
$$
whenever $n$ is sufficiently large (depending on the coefficients
of $\psi$). And for any $1\leq z\leq n/3$,
\begin{align*}
|\{(x,y):\,x,y\in A,x-y=z\}|=&|A\cap(z+A)|\\
=&2|A|-|A\cup(z+A)|\\
\geq&\frac{2\cdot3n}{4}-\frac{4n}{3}=\frac{n}{6}.
\end{align*}
Hence
$$
r_{W,\psi}(A)\geq
\frac{1}{24k}\frac{Wn^{1+\frac{1}{k}}a_1^{-\frac{1}{k}}}{\phi(W)\log
n}.
$$

Now we assume that $\delta<3/4$. Let
$\epsilon=\epsilon(\delta,a_{k-t+1})$ be a small positive real
number and $Q=Q(\delta,a_{k-t+1})$ be a large integer to be chosen
later. We shall show that if Theorem \ref{diffprimepoly2} holds
for $\delta+\epsilon$, it also holds for $\delta$. Define
$$
\psi_{q}(x)=\psi(qx)/q^t=a_1q^{k-t}x^k+\cdots+a_{k-t+1}x^t.
$$
By the induction hypothesis on
$\delta+\epsilon$, for any $1\leq q\leq Q$
$$
\min_{\substack{A\subseteq\{1,2,\ldots,n\}\\|A|\geq(\delta+\epsilon)n}}r_{Wq,\psi_{q}}(A)\geq
\frac{c(\delta+\epsilon,a_{k-t+1})}{2} \frac{Wq}{\phi(Wq)}
\frac{n^{1+\frac{1}{k}}(a_1q^{k-t})^{-\frac{1}{k}}}{\log n}
$$
provided that
$$
n\geq\max_{1\leq q\leq Q}N(\delta+\epsilon,Wq,\psi_q).
$$

Let $\A_m(b,d)$ denote the arithmetic progression
$\{b,b+d,\ldots,b+(m-1)d\}$. Suppose that
$$
n\geq \max\{e^{k(|a_1|+\cdots+|a_{k-t+1}|)Q^{k-t}},
10^4\epsilon^{-1}Q^t\max_{1\leq q\leq
Q}N(\delta+\epsilon,Wq,\psi_q)\}
$$
and $A\subseteq\{1,2,\ldots,n\}$ with $|A|=\delta n$. Let
$m=\floor{10^{-2}\epsilon Q^{-t}n}$. Observe that $|\{b:\,x,y\in
\A_m(b,q^t)\}|\leq m$ for every pair $(x,y)$. Let
$$
A_{b,q^t}=\{1+(x-b)/q^t\:,x\in A\cap
\A_m(b,q^t)\}\subseteq\{1,2,\ldots,m\}.
$$
Clearly if $x',y'\in A_{b,q^t}$ and $z'\in\Lambda_{1,Wq}$ satisfy
that $x'-y'=\psi_q(z')$, then
$$
x=b+(x'-1)q^t,\  y=b+(y'-1)q^t\in A,\ z=z'q\in\Lambda_{1,W}
$$
and $x-y=\psi(z)$. So if there exists $1\leq q\leq Q$ such that
$$
|\{1\leq b\leq n-mq^t:\, |A_{b,q^t}|\geq(\delta+\epsilon)m\}|\geq
\epsilon n,
$$
then
\begin{align*}
r_{W,\psi}(A)\geq&\frac{1}{m}\sum_{1\leq b\leq
n-mq^t}r_{Wq,\psi_{q}}(A_{b,q^t})\\
\geq&\epsilon n\frac{c(\delta+\epsilon,a_{k-t+1})}{2}
\frac{Wq}{\phi(Wq)}
\frac{m^{\frac{1}{k}}(a_1q^{k-t})^{-\frac{1}{k}}}{\log m}\\
\geq&\frac{c(\delta+\epsilon,a_{k-t+1})\epsilon^{1+\frac{1}{k}}}{400Q}\cdot
\frac{Wn^{1+\frac{1}{k}}a_1^{-\frac{1}{k}}}{\phi(W)\log n}.
\end{align*}

So we may assume that
\begin{align}
\label{assumption} |\{1\leq b\leq n-mq^t:\, |A\cap
\A_m(b,q^t)|\geq(\delta+\epsilon)m\}|<\epsilon n
\end{align}
for each $1\leq q\leq Q$. Let
$$
M=\max\{x\in\Z:\,\psi(x)\leq n\}.
$$
Clearly $M=n^{\frac{1}{k}}a_1^{-\frac{1}{k}}(1+o(1))$. We shall
show that
$$
\int_\T\bigg(\bigg|\sum_{x\leq n}\1_Ae(\alpha
x)\bigg|^2-\delta^2\bigg|\sum_{x\leq n}e(\alpha
x)\bigg|^2\bigg)\bigg(\sum_{z\leq
M}\psi^\Delta(z-1)\lambda_{1,W}(z)e(\alpha \psi(z))\bigg)d\alpha
$$
is relatively small.

For $1\leq q\leq Q$, define
$$
\M_{a,q}=\{\alpha:\,|\alpha-a/q|\leq \frac{1}{2}q^{-t}m^{-1}\}.
$$
Let
$$
\M=\bigcup_{\substack{1\leq a\leq q\leq Q\\(a,q)=1}}\M_{a,q},
$$
and let $\m=\T\setminus\M$. Let $B$ be a sufficiently large integer.
For $1\leq q\leq(\log M)^B$, define
$$
\M_{a,q}^*=\{\alpha:\,|\alpha q-a|\leq(\log M)^B/\psi(M)\}.
$$
Let
$$
\M^*=\bigcup_{\substack{1\leq a\leq q\leq (\log
M)^B\\(a,q)=1}}\M_{a,q}^*
$$
and let $\m^*=\T\setminus\M^*$.

Suppose that $\alpha\in\m$. We know $|\alpha q-a|\leq (\log
M)^B/\psi(M)$ for some $1\leq a\leq q<\psi(M)(\log M)^{-B}$ with
$(a,q)=1$.
If $\alpha\in\m^*$, i.e., $q\geq(\log M)^B$, then
$|\alpha-a/q|\leq q^{-2}$ and $(\log y)^\frac{B}{2}\leq \psi(y)(\log y)^{-\frac{B}{2}}$
for any $M(\log M)^{-\frac{B}{2k}}\leq y\leq M$.
So applying Lemma \ref{primepolyminor} and
a partial summation, we have
\begin{align*}
\sum_{z\leq
M}\psi^\Delta(z-1)\lambda_{1,W}(z)e(\alpha\psi(z))\ll_B\psi(M)(\log
M)^{-1}\leq n(\log M)^{-1},
\end{align*}
whenever $B$ is sufficiently large.

Now suppose that $q<(\log M)^B$, i.e., $\alpha\in\M^*$. Applying
Lemmas \ref{primepolymajor} and \ref{polydelta}, we have
\begin{align*}
&\sum_{z\leq
M}\psi^\Delta(z-1)\lambda_{1,W}(z)e(\alpha\psi(z))\\
=&\frac{\phi(W)}{\phi(Wq)}\sum_{\substack{1\leq r\leq
q\\(Wr+1,q)=1}}e(a\psi(r)/q)\sum_{z\leq
M}\psi^\Delta(z-1)e((\alpha-a/q)\psi(z))\\
&+O(\psi^\Delta(M)M(\log
M)^{-4B})\\
=&\frac{\phi(W)}{\phi(Wq)}\sum_{\substack{1\leq r\leq
q\\(Wr+1,q)=1}}e(a\psi(r)/q)\sum_{z\leq
n}e((\alpha-a/q)z)+O(\psi^\Delta(M)M(\log M)^{-4B}).
\end{align*}
Since $\alpha\in\m$, either $q>Q$ or
$|\alpha-a/q|>\frac{1}{2}q^{-t}m^{-1}$.

If $q>Q$, then in light of Lemma \ref{huasumprime}
\begin{align*}
\bigg|\frac{\phi(W)}{\phi(Wq)}\sum_{\substack{1\leq r\leq q\\
(Wr+1,q)=1}}e(a\psi(r)/q)\bigg|\leq
\bigg|\frac{1}{\phi(q)}\sum_{\substack{1\leq r\leq q\\
(Wr+1,q)=1}}e(a\psi(r)/q)\bigg|\leq
C_1|a_{k-t+1}|q^{-\frac{1}{k(k+2)}}.
\end{align*}
And if $|\alpha-a/q|>\frac{1}{2}q^{-t}m^{-1}$, then
$$
\bigg|\sum_{z=1}^{n}e((\alpha-a/q)z)\bigg|=\bigg|\frac{1-e((\alpha-a/q)n)}{1-e(\alpha-a/q)}\bigg|\leq
4\pi q^tm.
$$
Hence for $\alpha\in\m$
\begin{align*}
\sum_{z\leq M}\psi^\Delta(z-1)\lambda_{1,W}(z)e(\alpha\psi(z)) \leq
C_1|a_{k-t+1}|Q^{-\frac{1}{k(k+2)}}n+4\pi mQ^t+O(n(\log
n)^{-1}).
\end{align*}

Suppose that $\alpha\in\M$. Let $\tau=\1_A-\delta$ where $\1_A(x)=1$
or $0$ according whether $x\in A$ or not. Let
$$
S(\alpha)=\sum_{c=0}^{m-1}e(\alpha c)
$$
and
$$
T(\alpha)=\sum_{b=1}^n\tau(b)e(\alpha b).
$$
Then
\begin{align*}
S(\alpha q^t)T(\alpha)=&\sum_{b=1}^n\tau(b)\sum_{c=0}^{m-1}e(\alpha(b+cq^t))\\
=&\sum_{b=1}^{n-mq^t}e(\alpha(b+(m-1)q^t))\sum_{c=0}^{m-1}\tau(b+cq^t)+R(\alpha),
\end{align*}
where $|R(\alpha)|\leq 2m^2q^t$. When $|\alpha q^t-aq^{t-1}|\leq
\frac{1}{2}m^{-1}$,
$$
|S(\alpha q^t)|=|S(\alpha q^t-aq^{t-1})|=\bigg|\frac{1-e(m(\alpha
q^t-aq^{t-1}))}{1-e(\alpha q^t-aq^{t-1})}\bigg|\geq \frac{m}{\pi}.
$$
Hence for $\alpha\in\M_{a,q}$,
$$
m|T(\alpha)|\leq\pi|S(\alpha q^t)T(\alpha)|\leq
\pi\bigg|\sum_{b=1}^{n-mq^t}e(\alpha(b+(m-1)q^t))\sum_{c=0}^{m-1}\tau(b+cq^t)\bigg|+\pi|R(\alpha)|.
$$
Notice that
$$
|\{1\leq b\leq n-mq^t:\, x\in\A_m(b,q^t)\}|\leq m,
$$
and the equality holds if $1+(m-1)q^t\leq x\leq n-mq^t$. It
follows that
$$
m|A|\geq\sum_{b=1}^{n-mq^t}|A\cap\A_m(b,q^t)|=\sum_{x\in
A}\sum_{b=1}^{n-mq^t}\1_{\A_m(b,q^t)}(x)\geq m|A|-2m^2q^t,
$$
whence
$$
\bigg|\sum_{b=1}^{n-mq^t}(|A\cap\A_m(b,q^t)|-(\delta+\epsilon)
m)\bigg|\leq\epsilon nm+(2+\delta)m^2q^t.
$$
By the assumption (\ref{assumption}), we have
$$
\sum_{\substack{1\leq b\leq n-mq^t\\
|A\cap\A_m(b,q^t)|\geq (\delta+\epsilon) m}}(|A\cap
\A_m(b,q^t)|-(\delta+\epsilon) m)\leq \epsilon n(1-\delta)m.
$$
It follows that
\begin{align*}
\sum_{b=1}^{n-mq^t}||A\cap\A_m(b,q^t)|-\delta m|
\leq&\sum_{b=1}^{n-mq^t}||A\cap\A_m(b,q^t)|-(\delta+\epsilon) m|+\epsilon nm\\
\leq&2\sum_{\substack{1\leq b\leq n-mq^t\\
|A\cap\A_m(b,q^t)|\geq (\delta+\epsilon) m}}(|A\cap
\A_m(b,q^t)|-(\delta+\epsilon)
m)\\
&+\bigg|\sum_{b=1}^{n-mq^t}(|A\cap\A_m(b,q^t)|-(\delta+\epsilon) m)\bigg|+\epsilon nm\\
\leq&4\epsilon nm+4m^2q^t.
\end{align*}
Thus for any $\alpha\in\M$.
\begin{align*}
|T(\alpha)|\leq&\frac{\pi}{m}
\bigg(\bigg|\sum_{b=1}^{n-mq^t}e(\alpha(b+(m-1)q^t))\sum_{c=0}^{m-1}\tau(b+cq^t)\bigg|+2m^2q^t\bigg)\\
\leq&\frac{\pi}{m}
\bigg(\sum_{b=1}^{n-mq^t}||A\cap\A_m(b,q^t)|-\delta m|+2m^2q^t\bigg)\\
\leq&4\pi\epsilon n+6\pi mQ^t,
\end{align*} i.e.,
$$
\bigg|\sum_{x=1}^n\1_A(x)e(\alpha x)-\delta\sum_{x=1}^ne(\alpha
x)\bigg|\leq16\epsilon n.
$$

It is easy to see that
$$
||x|^2-|y|^2|\leq||x|-|y||^{\frac2\rho}(|x|+|y|)^{2-\frac{2}{\rho}}\leq
4|x-y|^{\frac2\rho}(|x|^{2-\frac{2}{\rho}}+|y|^{2-\frac{2}{\rho}})
$$
for any $\rho\geq 2$. Let $\rho=k2^{k+3}$. Then
\begin{align*}
&\bigg|\int_{\M}\bigg(\bigg|\sum_{x=1}^n\1_A(x)e(\alpha
x)\bigg|^2-\delta^2\bigg|\sum_{x=1}^ne(\alpha
x)\bigg|^2\bigg)\bigg(\sum_{z=1}^{M}\psi^\Delta(z-1)\lambda_{1,W}(z)e(\alpha
\psi(z))\bigg)d\alpha\bigg|\\
\leq&4(16\epsilon
n)^{\frac2\rho}\int_{\M}\bigg(\bigg|\sum_{x=1}^n\1_A(x)e(\alpha
x)\bigg|^{2-\frac2\rho}+\delta^{2-\frac2\rho}\bigg|\sum_{x=1}^ne(\alpha
x)\bigg|^{2-\frac2\rho}\bigg)\\
&\cdot\bigg|\sum_{z=1}^{M}\psi^\Delta(z-1)\lambda_{1,W}(z)e(\alpha
\psi(z))\bigg|d\alpha.
\end{align*}
By the H\"older inequality,
\begin{align*}
&\int_{\M}\bigg|\sum_{x=1}^n\1_A(x)e(\alpha
x)\bigg|^{2-\frac2\rho}\bigg|\sum_{z=1}^{M}\psi^\Delta(z-1)\lambda_{1,W}(z)e(\alpha
\psi(z))\bigg|d\alpha\\
\leq& \bigg(\int_{\T}\bigg|\sum_{x=1}^n\1_A(x)e(\alpha x)\bigg|^
2d\alpha\bigg)^{1-\frac1{\rho}}\bigg(\int_{\T}\bigg|\sum_{z=1}^{M}\psi^\Delta(z-1)\lambda_{1,W}(z)e(\alpha
\psi(z))\bigg|^{\rho}d\alpha\bigg)^{\frac1{\rho}}.
\end{align*}
Applying Lemma \ref{huaprimepoly},
\begin{align*}
\int_{\T}\bigg|\sum_{z=1}^{M}\psi^\Delta(z-1)\lambda_{1,W}(z)e(\alpha
\psi(z))\bigg|^{\rho}d\alpha \leq C_2|a_{k-t+1}|\psi(M)^{\rho-1}.
\end{align*}
Therefore
\begin{align*}
&\int_{\M}\bigg|\sum_{x=1}^n\1_A(x)e(\alpha
x)\bigg|^{2-\frac2\rho}\bigg|\sum_{z=1}^{M}\psi^\Delta(z-1)\lambda_{1,W}(z)e(\alpha
\psi(z))\bigg|d\alpha\\\leq&
C_2^{\frac1\rho}|a_{k-t+1}|^{\frac1\rho}|(\delta
n)^{1-\frac1{\rho}}n^{1-\frac1{\rho}}.
\end{align*}
Similarly,
\begin{align*}
\int_{\M}\bigg|\sum_{x=1}^ne(\alpha
x)\bigg|^{2-\frac{2}\rho}\bigg|\sum_{z=1}^{M}\psi^\Delta(z-1)\lambda_{1,W}(z)e(\alpha
\psi(z))\bigg|d\alpha \leq
C_2^{\frac1\rho}|a_{k-t+1}|^{\frac1\rho}n^{2-\frac2\rho}.
\end{align*}
It is concluded that
\begin{align*}
&\bigg|\int_{\M}\bigg(\bigg|\sum_{x=1}^n\1_A(x)e(\alpha
x)\bigg|^2-\delta^2\bigg|\sum_{x=1}^ne(\alpha
x)\bigg|^2\bigg)\bigg(\sum_{z=1}^{M}\psi^\Delta(z-1)\lambda_{1,W}(z)e(\alpha
\psi(z))\bigg)d\alpha\bigg|\\
\leq&8C_2^{\frac1\rho}|a_{k-t+1}|^{\frac1\rho}\epsilon^{\frac2\rho}(\delta^{1-\frac1{\rho}}+\delta^{2-\frac2\rho})
n^{2}.
\end{align*}

Now we have shown that
\begin{align*}
&\bigg|\int_\T\bigg(\bigg|\sum_{x\leq n}\1_Ae(\alpha
x)\bigg|^2-\delta^2\bigg|\sum_{x\leq n}e(\alpha
x)\bigg|^2\bigg)\bigg(\sum_{z\leq
M}\psi^\Delta(z-1)\lambda_{1,W}(z)e(\alpha \psi(z))\bigg)d\alpha\bigg|\\
\leq& (2C_1|a_{k-t+1}|Q^{-\frac{1}{k(k+2)}}n+5\pi
mQ^t)\int_\T\bigg(\bigg|\sum_{x\leq n}\1_Ae(\alpha
x)\bigg|^2+\delta^2\bigg|\sum_{x\leq n}e(\alpha
x)\bigg|^2\bigg)d\alpha\\
&+8C_2^{\frac1\rho}|a_{k-t+1}|^{\frac1\rho}\epsilon^{\frac2\rho}(\delta^{1-\frac1{\rho}}+\delta^{2-\frac2\rho})
n^{2}\\
\leq& 4C_1|a_{k-t+1}|Q^{-\frac{1}{k(k+2)}}\delta
n^2+\epsilon\delta
n^2+16C_2^{\frac1\rho}|a_{k-t+1}|^{\frac1\rho}\epsilon^{\frac2\rho}\delta^{1-\frac1{\rho}}
n^{2}.
\end{align*}
On the other hand, we have
\begin{align*}
&\int_{\T}\bigg|\sum_{x=1}^ne(\alpha
x)\bigg|^2\bigg(\sum_{z=1}^{M}\psi^\Delta(z-1)\lambda_{1,W}(z)e(\alpha
\psi(z))\bigg)d\alpha\\=&\sum_{\substack{1\leq x,y\leq n\\
1\leq z\leq M\\x-y=\psi(z)}}\psi^\Delta(z-1)\lambda_{1,W}(z)\\
\geq&\sum_{\substack{1\leq x,y\leq n\\
M/4+1\leq z\leq M/2\\x-y=\psi(z)}}\psi^\Delta(z-1)\lambda_{1,W}(z)\\
\geq&\frac{M}{8}(n-\psi(M/2))\psi^\Delta(M/4).
\end{align*}
It follows that
\begin{align*}
&\int_{\T}\bigg|\sum_{x=1}^n\1_{A}(x)e(\alpha
x)\bigg|^2\bigg(\sum_{z=1}^{M}\psi^\Delta(z-1)\lambda_{1,W}(z)e(\alpha
\psi(z))\bigg)d\alpha\\
\geq&\delta^2\int_{\T}\bigg|\sum_{x=1}^ne(\alpha
x)\bigg|^2\bigg(\sum_{z=1}^{M}\psi^\Delta(z-1)\lambda_{1,W}(z)e(\alpha
\psi(z))\bigg)d\alpha\\
&-4C_1|a_{k-t+1}|Q^{-\frac{1}{k(k+2)}}\delta n^2-\epsilon \delta
n^2-16C_2^{\frac1\rho}|a_{k-t+1}|^{\frac1\rho}\epsilon^{\frac{2}\rho}\delta^{1-\frac1{\rho}}n^2\\
\geq&\frac{k\delta^2
n^2}{4^{k+1}}-4C_1|a_{k-t+1}|Q^{-\frac{1}{k(k+2)}}\delta
n^2-\epsilon \delta
n^2-16C_2^{\frac1\rho}|a_{k-t+1}|^{\frac1\rho}\epsilon^{\frac{2}\rho}\delta^{1-\frac1{\rho}}n^2.
\end{align*}
Let
$\epsilon=4^{-(k+2)\rho}\delta^{\frac{\rho+1}{2}}C_2^{-\frac{1}{2}}|a_{k-t+1}|^{-\frac{1}{2}}$
and
$$
Q=4^{(k+1)^4}\delta^{-2k(k+2)}C_1^{k(k+2)}|a_{k-t+1}|^{k(k+2)}.
$$
Therefore
\begin{align*}
&|\{(x,y,z):\,x,y\in A, z\in\Lambda_{1,W}, x-y=\psi(z)\}|\\
\geq&\frac{W/\phi(W)}{\psi^\Delta(M)\log
(WM+1)}\int_{\T}\bigg|\sum_{x=1}^n\1_{A}(x)e(\alpha
x)\bigg|^2\bigg(\sum_{z=1}^{M}\psi^\Delta(z-1)\lambda_{1,W}(z)e(\alpha
\psi(z))\bigg)d\alpha\\
\geq&\frac{W\delta^2}{4^{k+2}k\phi(W)}\cdot\frac{
n^{1+\frac{1}{k}}a_1^{-\frac{1}{k}}}{\log n}.
\end{align*}
This concludes our desired result.
\end{proof}

Finally, let us briefly discuss the bound in Theorem \ref{diffprimepoly}. Let
$R_{W,\psi}(\delta)$ be the least integer $n$ such that for any
$A\subseteq\{1,2,\ldots,n\}$, there exist $x,y\in A$ and $z\in\Lambda_{1,W}$ satisfying $x-y=\psi(z)$.
In our proof, we choose
$\epsilon=\epsilon(\delta)=O_{|a_{k-t}|}(\delta^{O_k(1)})$ and $Q=Q(\delta)=O_{|a_{k-t}|}(\delta^{-O_k(1)})$. So the iteration process
$\delta\to\delta+\epsilon(\delta)$ will end after $O_{|a_{k-t}|}(\delta^{-O_k(1)})$ steps. Also, clearly for $\delta>3/4$,
$$
R_{W,\psi}(\delta)\ll (|a_1|+\cdots+|a_{k-t}|)(\min\{p:\,p\in\Lambda_{1,W}\})^k.
$$
Notice that when the iteration process ends, $W$ will become $WQ^{O_{|a_{k-t}|}(\delta^{-O_k(1)})}$ and $a_i$ will become $a_iQ^{O_{|a_{k-t}|}(\delta^{-O_k(1)})}$.
Hence we have
$$
R_{W,\psi}(\delta)\leq\exp(O_{W,a_1,\ldots,a_{k-t}}(\delta^{-O_{|a_{k-t}|}(\delta^{-O_{k}(1)})})),
$$
since $\min\{p:\,p\in\Lambda_{1,W}\}\leq e^{O(W)}$. In other words, if a subset $A\subseteq\{1,2,\ldots,n\}$ satisfies
$|A|\geq O_{W,a_1,\ldots,a_{k-t}}(n/\log\log\log n)$,
then there exist $x,y\in A$ and $z\in\Lambda_{1,W}$ such that $x-y=\psi(z)$. Of course, this bound is very rough. And we believe
that it could be improved using some more refined estimations (e.g. \cite{PintzSteigerSzemeredi88}, \cite{BalogPelikanPintzSzemeredi94}, \cite{Lucier06}, \cite{Lucier}, \cite{RuzsaSanders}).

\section{Proof of Theorem \ref{primediffprimepoly}}
\setcounter{Lem}{0}\setcounter{Thm}{0}\setcounter{Cor}{0}
\setcounter{equation}{0}

Write $\psi(x)=a_1x^k+a_2x^{k-1}+\cdots+a_{k-t+1}x^t$ where
$a_{k-t+1}\not=0$. Let $\delta=\od_\P(P)$. Since $\od_\P(P)>0$,
there exist infinitely many $n$ such that
$$
|P\cap[1, n]|\geq\frac{4\delta}{5}\cdot\frac{n}{\log n}.
$$
Define
$$
w(n)=\max\{w\leq \log\log\log n:\, n\geq
16\W(w)N(\delta,\W(w),\psi_{\W(w)})\},
$$
where $N(\delta,W,\psi)$ is same as the one defined in Theorem
\ref{diffprimepoly2} and $\W(w)=\prod_{\substack{p\leq w\\
p\text{ prime}}}p$. Clearly $\lim_{n\to\infty} w(n)=\infty$. Let
$w=w(n)$ and $\W =\W(w)$. Then
$$
\sum_{\substack{x\in P\cap[1,n]\\ (x,\W)=1}}\log
x\geq\sum_{\substack{x\in P\cap[n^{\frac23},n]}}\log x\geq
\frac{2\log n}{3}(|P\cap[1, n]|-n^{\frac{2}{3}})\geq
\frac{\delta}{2}\cdot n.
$$
Hence there exists $1\leq b\leq \W ^t$ with $(b,\W)=1$ such that
$$
\sum_{\substack{x\in P\cap[1,n]\\ x\equiv b\pmod{\W ^t}}}\log
x\geq \frac{\delta}{2\phi(\W ^t)}\cdot n.
$$
Let
$$
A=\{(x-b)/\W ^t:\,x\in P\cap[1,n],\ x\equiv b\pmod{\W ^t}\}.
$$
Let $N$ be a prime in the interval $(2n/\W ^t,4n/\W ^t]$. Define
$\lambda_{b,\W^t,N}=\lambda_{b,\W^t}/N$ and $a=\1_A\lambda_{b,\W
^t,N}$. Then
$$
\sum_{x}a(x)\geq \frac{\phi(\W ^t)}{\W ^tN}\cdot \frac{\delta
n}{2\phi(\W ^t)}\geq \frac{\delta}{8}.
$$
Let
$$
\psi_\W(x)=\psi(\W
x)/\W^t=a_1\W^{k-t}x^{k}+\cdots+a_{k-t+1}x^{t}.
$$
Clearly $\psi_\W(z)$
is positive and strictly increasing for $1\leq z\leq M$, whenever
$\W$ is sufficiently large.

Below we consider $A$ as a subset of $\Z_N$. Let $M=\max\{z\in
\mathbb{N} :\,\psi_{\W }(z)<N/2\}$. If $x,y\in A$ and $1\leq z\leq
M$ satisfy $x-y=\psi_{\W }(z)$ in $\Z_N$, then we also have
$x-y=\psi_{\W }(z)$ in $\Z$. In fact, since $1\leq x,y<N/2$ and
$1\leq z\leq M$, it is impossible that $x-y=\psi_{\W }(z)-N$ in
$\Z$. For a function $f:\,\Z_N\to\C$, define
$$
\tilde{f}(r)=\sum_{x\in\Z_N}f(x)e(-xr/N).
$$
\begin{Lem}[Bourgain \cite{Bourgain89}, \cite{Bourgain93} and Green \cite{Green05}]
\label{primerestriction} Suppose that $\rho>2$. Then
\begin{align*}
\sum_r|\tilde{a}(r)|^\rho\leq C(\rho),
\end{align*}
where $C(\rho)$ is a constant only depending on $\rho$.
\end{Lem}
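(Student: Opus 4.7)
This is exactly the Bourgain-Green restriction theorem for primes (cf.\ Bourgain \cite{Bourgain89, Bourgain93} for the analogous squares case, and Green \cite{Green05} for the primes). Since the paper is already modeled on Green's transference strategy, my plan is to invoke Green's result with the cosmetic change of modulus $W \rightsquigarrow W^t$; all of the underlying prime-distribution input (Siegel-Walfisz, Brun-Titchmarsh) is insensitive to this change, so the adaptation is routine.

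The high-level argument has three pieces. First, a pointwise reduction: since $|a| \leq \lambda_{b,W^t,N}$ and the left-hand side depends on $a$ only through $|\tilde a|$, we may assume $a = \lambda_{b,W^t,N}$ itself, gaining access to the non-negativity and sieve-majorant structure that the argument requires. Second, a single high-moment estimate $\sum_r |\tilde a(r)|^{\rho_0} \leq C(\rho_0)$ for some fixed $\rho_0 > 2$ (for instance $\rho_0 = 9$), obtained by expanding the $2\ell$-th moment as a weighted count of additive equations in primes, controlling the corresponding integral over $\T$ via the Hardy-Littlewood circle method with Lemmas \ref{primepolymajor} and \ref{primepolyminor} specialized to the linear phase $\psi(x) = x$, and transferring from $\T$ back to $\Z_N$ by the Marcinkiewicz-Zygmund device used in the proof of Lemma \ref{primepolyrestriction}. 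Third, a Bourgain-type dyadic interpolation between this $L^{\rho_0}$ bound and the $L^2$ bound furnished by Plancherel and Brun-Titchmarsh, producing the claim for every $\rho > 2$.

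The main obstacle is the interpolation step. A naive Riesz-Thorin argument loses a spurious factor of $\log N$ coming from the $L^2$ side, which would break the $N$-independence of $C(\rho)$. Bourgain's trick is to instead decompose the Fourier support into dyadic level sets $\{r : 2^{-j-1} < |\tilde a(r)| \leq 2^{-j}\}$, bound the cardinality of each level set by Plancherel combined with the moment estimate (whichever is stronger at that level), and sum dyadically in $j$; the rapid decay of $|\tilde a(r)|$ at the high end kills the logarithm and yields the uniform bound $C(\rho)$. Once the $L^{\rho_0}$ input is in hand, this interpolation is purely abstract and introduces no further arithmetic input.
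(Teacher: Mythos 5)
The paper offers no argument here beyond a citation: Lemma \ref{primerestriction} is exactly Lemma 6.6 of \cite{Green05}, quoted with the modulus $\W^t$ in place of Green's modulus (that change is indeed cosmetic). Your reconstruction of the underlying proof, however, has two genuine gaps. The opening reduction is false: $0\le a\le\lambda_{b,\W^t,N}$ pointwise does \emph{not} imply $|\tilde a(r)|\le|\tilde\lambda_{b,\W^t,N}(r)|$, so you may not ``assume $a=\lambda_{b,\W^t,N}$ itself''; Fourier coefficients are not monotone under pointwise domination. (The reduction is harmless for even integer moments, where $\sum_r|\tilde a(r)|^{2\ell}$ is a non-negative weighted count of solutions of $x_1+\cdots+x_\ell\equiv x_{\ell+1}+\cdots+x_{2\ell}\pmod N$ and this is all your second step really uses; but it is unavailable for general $\rho$, and if it were available the lemma would be nearly trivial, since $\tilde\lambda$ is concentrated on major arcs --- the whole content of the statement is its uniformity in the arbitrary set $A$.)

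The more serious gap is the interpolation step. Since $\sum_r|\tilde a(r)|^2=N\sum_x a(x)^2$ is genuinely of size $\log N$, the only level-set bounds your inputs give at the scale $|\tilde a(r)|\sim 2^{-j}$ are $|R_j|\ll 2^{2j}\log N$ (Plancherel) and $|R_j|\ll 2^{j\rho_0}$ (the high moment); summing $2^{-j\rho}\min\{2^{2j}\log N,\,2^{j\rho_0}\}$ over $j$ and optimizing the crossover yields $\ll(\log N)^{(\rho_0-\rho)/(\rho_0-2)}$ for every $2<\rho<\rho_0$, so the dyadic decomposition reproduces, rather than kills, the interpolation loss: no choice of cut gives an $N$-free constant. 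What Bourgain and Green actually prove is an $N$-free level-set estimate of the shape $|\{r:|\tilde a(r)|\ge\delta\}|\ll_\epsilon\delta^{-2-\epsilon}$, obtained by a Tomas--Stein dualization: with $F(x)=\sum_{r\in R}\omega_r e(xr/N)$ for suitable unimodular $\omega_r$ one has $\delta|R|\le\sum_x a(x)|F(x)|\le\sum_x\lambda_{b,\W^t,N}(x)|F(x)|$, and after Cauchy--Schwarz the task becomes bounding $\sum_{r,r'\in R}|\tilde\lambda_{b,\W^t,N}(r-r')|$, which is exactly where the arithmetic structure of the majorant (major-arc concentration of $\tilde\lambda$, i.e.\ Siegel--Walfisz/sieve input) enters. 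That ingredient is absent from your outline, and without it the range of $\rho$ near $2$ --- the only nontrivial range, since for $\rho\ge\rho_0$ the claim follows from the moment bound together with $|\tilde a(r)|\le\sum_x a(x)=O(1)$ --- is out of reach. To make the lemma rigorous, either carry out Green's proof of his Lemma 6.6 with the modulus $\W^t$, or simply cite it, as the paper does.
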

\begin{proof} See \cite[Lemma 6.6]{Green05}.
\end{proof}
\begin{Lem}
\label{restrictionprimepoly}
\begin{align*}
\sum_{r\in\Z_N}\bigg|\sum_{z=1}^M\psi_{\W
}^\Delta(z-1)\lambda_{1,\W W}(z)e(-\psi_\W(z)r/N)\bigg|^\rho\leq
C'(\rho)|a_{k-t+1}|N^{\rho}.
\end{align*}
provided that $\rho\geq k2^{k+3}$, where $C'(\rho)$ is a constant
only depending on $\rho$.
\end{Lem}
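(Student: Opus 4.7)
The plan is to recognize this estimate as a direct specialization of Lemma~\ref{primepolyrestriction}, applied to the polynomial $\psi_\W$ on the interval $[1,M]$ with the modulus $\W W$ playing the role of $W$ and the prime $N$ playing the role of $p$ in that lemma.

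First I would check the three hypotheses of Lemma~\ref{primepolyrestriction} in this new setting. The positivity and strict monotonicity of $\psi_\W$ on $[1,M]$ is exactly what was recorded in the paragraph preceding the present lemma. The requirement $N\geq\psi_\W(M)$ is immediate from the definition $M=\max\{z\in\N:\psi_\W(z)<N/2\}$, which gives $\psi_\W(M)<N/2<N$. The ``$\leq\log N$'' size condition on the modulus and coefficients imposed at the head of Section~2 is also harmless here: the $W$-trick inflates the coefficients and modulus only by factors of $\W\leq\log\log\log n$, whereas $\log M\asymp k^{-1}\log N$ is comparable to $\log n$ (since $N\asymp n/\W^t$), so everything remains well below the allowed threshold once $n$ is large.

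Second I would use the elementary observation that $\gcd(\psi_\W)\mid a_{k-t+1}$: because $a_{k-t+1}$ itself appears as the coefficient of $x^t$ in $\psi_\W$, any common divisor of the coefficients of $\psi_\W$ must divide $a_{k-t+1}$, so $\gcd(\psi_\W)\leq|a_{k-t+1}|$. Applying Lemma~\ref{primepolyrestriction} with these substitutions then yields
$$
\frac{1}{N}\sum_{r=1}^{N}\Bigl|\sum_{z=1}^{M}\psi_\W^\Delta(z-1)\lambda_{1,\W W}(z)e(-\psi_\W(z)r/N)\Bigr|^{\rho}\ll_\rho|a_{k-t+1}|\,\psi_\W(M)^{\rho-1}.
$$
Multiplying through by $N$ and estimating $\psi_\W(M)^{\rho-1}\leq N^{\rho-1}$ via $\psi_\W(M)<N$ produces the claimed bound $C'(\rho)|a_{k-t+1}|N^{\rho}$ after absorbing the implicit constant into $C'(\rho)$.

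The crucial point is that there is essentially no new obstacle: the serious work, namely the Marcinkiewicz--Zygmund transference that converts the continuous $L^{\rho}$ estimate of Lemma~\ref{huaprimepoly} into a discrete restriction-type estimate over $\Z_p$, has already been carried out inside the proof of Lemma~\ref{primepolyrestriction}. The only point requiring any care is verifying that the parameters attached to $\psi_\W$ and the modulus $\W W$ still meet the ambient size conditions from Section~2, and that $N$ is large enough relative to $\psi_\W(M)$ to serve as the prime required by that lemma; both are automatic from the choices made at the start of Section~4.
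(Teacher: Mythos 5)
Your proposal is correct and follows exactly the paper's own route: the lemma is deduced as an immediate consequence of Lemma \ref{primepolyrestriction} applied to $\psi_\W$ on $[1,M]$ with modulus $\W W$ and prime $N\geq\psi_\W(M)$, using $\gcd(\psi_\W)\leq|a_{k-t+1}|$ and $\psi_\W(M)<N$. Your extra verification of the Section~2 size conditions is a reasonable addition (though note $\W$ is the primorial of $w\leq\log\log\log n$, so $\W\approx\log\log n$ rather than $\leq\log\log\log n$, which still lies well below $\log N$).
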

\begin{proof}
This is an immediate consequence of Lemma
\ref{primepolyrestriction} since $\gcd(\psi_{\W})\leq |a_{k-t+1}|$.
\end{proof}

Let $\eta$ and $\epsilon$ be two positive real numbers to be
chosen later. Let
$$
R=\{r\in\Z_N:\,\tilde{a}(r)\geq\eta\}
$$
and
$$
B=\{r\in\Z_N:\,\|xr/N\|\leq\epsilon\text{ for all }r\in R\}.
$$
Define $\beta=\1_B/|B|$ and $a'=a*\beta*\beta$, where
$$
f*g(x)=\sum_{y\in\Z_N}f(y)g(x-y).
$$
Let $\varrho=k2^{k+3}$.
\begin{Lem}
$$
\sum_{\substack{x,y\in\Z_N\\ 1\leq z\leq M\\
x-y=\psi_{\W}(z)}}(a'(x)a'(y)-a(x)a(y))\psi_{\W
}^\Delta(z-1)\lambda_{1,\W W}(z)\leq
C(\epsilon^2\eta^{-\frac{5}{2}}+\eta^{\frac{1}{\varrho}}).
$$
\end{Lem}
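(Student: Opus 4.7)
The plan is to Fourier-expand in $\Z_N$ and compare the two bilinear forms mode by mode. I would first introduce the profile $G(u)=\sum_{1\le z\le M,\,\psi_\W(z)=u}\psi_\W^\Delta(z-1)\lambda_{1,\W W}(z)$ on $\Z_N$ (well-defined because $\psi_\W(M)<N/2$ prevents wrap-around), so that $\tilde G(r)=\sum_{z=1}^{M}\psi_\W^\Delta(z-1)\lambda_{1,\W W}(z)e(-\psi_\W(z)r/N)$ and Lemma~\ref{restrictionprimepoly} supplies $\sum_{r\in\Z_N}|\tilde G(r)|^\varrho\ll N^\varrho$. Since $a'=a*\beta*\beta$ yields $\widetilde{a'}(r)=\tilde a(r)\tilde\beta(r)^2$, and since $a,\beta$ are real-valued (so $\tilde f(-r)=\overline{\tilde f(r)}$), Plancherel rewrites the quantity to be bounded as
\[
S=\frac{1}{N}\sum_{r\in\Z_N}|\tilde a(r)|^2\bigl(|\tilde\beta(r)|^4-1\bigr)\tilde G(-r).
\]
I would then split $S=S_1+S_2$ according to whether $r\in R$ (major modes) or $r\notin R$ (minor modes).

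For $S_1$, the key observation is that the Bohr-type set $B$ contains $0$ and is symmetric about $0$, so $\tilde\beta(r)$ is real, and for $x\in B,\,r\in R$ the expansion $\cos(2\pi xr/N)=1-2\pi^2\|xr/N\|^2+O(\epsilon^4)$ gives $\tilde\beta(r)=1+O(\epsilon^2)$ and hence $\bigl||\tilde\beta(r)|^4-1\bigr|\ll\epsilon^2$. I would then apply Cauchy--Schwarz
\[
\sum_{r\in R}|\tilde a(r)|^2|\tilde G(r)|\le \Bigl(\sum_{r\in\Z_N}|\tilde a(r)|^4\Bigr)^{1/2}\Bigl(\sum_{r\in R}|\tilde G(r)|^2\Bigr)^{1/2},
\]
using Lemma~\ref{primerestriction} at $\rho=4$ to bound the first factor by $O(1)$; Chebyshev on Lemma~\ref{primerestriction} at $\rho=5$ to get $|R|\le C\eta^{-5}$; and H\"older combined with Lemma~\ref{restrictionprimepoly} to obtain $\sum_{r\in R}|\tilde G(r)|^2\le|R|^{1-2/\varrho}\bigl(\sum_r|\tilde G|^\varrho\bigr)^{2/\varrho}\ll\eta^{-5(1-2/\varrho)}N^2\le\eta^{-5}N^2$ (since $\eta<1$). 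Combining these gives $\sum_{r\in R}|\tilde a|^2|\tilde G|\ll\eta^{-5/2}N$, so $|S_1|\ll\epsilon^2\eta^{-5/2}$.

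For $S_2$, $|\tilde a(r)|<\eta$ and $\bigl||\tilde\beta(r)|^4-1\bigr|\le1$. Writing $|\tilde a(r)|^2\le\eta^{1/\varrho}|\tilde a(r)|^{2-1/\varrho}$ I would apply H\"older pairing $|\tilde a|^{2-1/\varrho}$ (bounded via Lemma~\ref{primerestriction} at exponent $\varrho$) against $|\tilde G|$ (controlled via Lemma~\ref{restrictionprimepoly} combined with interpolation inside $\Z_N$), to obtain $|S_2|\ll\eta^{1/\varrho}$. Adding the two pieces completes the proof.

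The main technical obstacle is the bookkeeping on the major modes: the specific exponent $\eta^{-5/2}$ is forced by the precise interplay among Cauchy--Schwarz with the $\ell^4$-restriction on $\tilde a$, the $\ell^5$-Chebyshev bound $|R|\ll\eta^{-5}$, and H\"older interpolation of $|\tilde G|$ against the $\ell^\varrho$-restriction; moreover, the symmetry $B=-B$ is essential, as it is precisely what upgrades the Bohr-set approximation $\tilde\beta(r)=1+O(\epsilon)$ to the second-order $\tilde\beta(r)=1+O(\epsilon^2)$. Once these choices are in place, the minor-arc bound is comparatively routine.
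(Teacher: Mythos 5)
Your proposal is correct and is essentially the paper's own argument: the same Plancherel expansion of the difference, the same split into $r\in R$ and $r\notin R$, the Bohr-set estimate $\bigl||\tilde\beta(r)|^4-1\bigr|\ll\epsilon^2$ on $R$, and H\"older on the minor modes against the two restriction estimates (Lemmas \ref{primerestriction} and \ref{restrictionprimepoly}); your major-mode bookkeeping (Cauchy--Schwarz with the $\ell^4$ bound on $\tilde a$, Chebyshev giving $|R|\ll\eta^{-5}$, and H\"older on $\tilde G$ over $R$) differs only cosmetically from the paper's (the trivial bounds $|\tilde a(r)|\ll1$ and $|\tilde G(r)|\ll\psi_{\W}(M)\ll N$ together with $|R|\ll\eta^{-5/2}$ from the $\ell^{5/2}$ restriction) and yields the same $\epsilon^2\eta^{-5/2}$. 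Two small touch-ups: in the minor-mode H\"older the exponent on $\tilde a$ must be $(2\varrho-1)/(\varrho-1)$ (just above $2$), not $\varrho$, so that the $\tilde G$ factor is taken exactly in $\ell^\varrho$ where Lemma \ref{restrictionprimepoly} applies; and the symmetry of $B$ is convenient but not essential, since only $|\tilde\beta(r)|^4$ enters and $|\tilde\beta(r)|\ge\operatorname{Re}\tilde\beta(r)\ge1-2\pi^2\epsilon^2$ for $r\in R$ already gives the $O(\epsilon^2)$ bound.
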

\begin{proof}
It is not difficult to check that
\begin{align*}
&\sum_{\substack{x,y\in\Z_N\\ 1\leq z\leq M\\
x-y=\psi_{\W }(z)}}a(x)a(y)\psi_{\W}^\Delta(z-1)\lambda_{1,\W W}(z)\\
=&\frac1N
\sum_{\substack{r\in\Z_N}}\tilde{a}(r)\tilde{a}(-r)\bigg(\sum_{z=1}^M\psi_{\W
}^\Delta(z-1)\lambda_{1,\W W}(z)e(-\psi_{\W }(z)r/N)\bigg).
\end{align*}
Also, it is easy to see that
$(f*g)\,\tilde{}=\tilde{f}\tilde{g}$. Then
\begin{align*}
&\sum_{\substack{x,y\in\Z_N\\ 1\leq z\leq M\\
x-y=\psi_{\W }(z)}}a'(x)a'(y)\psi_{\W }^\Delta(z-1)\lambda_{1,\W
W}(z)-
\sum_{\substack{x,y\in\Z_N\\ 1\leq z\leq M\\
x-y=\psi_{\W }(z)}}a(x)a(y)\psi_{\W }^\Delta(z-1)\lambda_{1,\W W}(z)\\
=&\frac{1}{N}\sum_{\substack{r\in\Z_N}}\tilde{a}(r)\tilde{a}(-r)(\tilde{\beta}(r)^2\tilde{\beta}(-r)^2-1)\bigg(\sum_{z=1}^M\psi_{\W
}^\Delta(z-1)\lambda_{1,\W W}(z)e(-\psi_{\W }(z)r/N)\bigg).
\end{align*}
If $r\in R$, then by the proof of Lemma 6.7 of \cite{Green05}, we
know that
$$
|\tilde{\beta}(r)^2\tilde{\beta}(-r)^2-1|\leq 2^{16}\epsilon^2.
$$
And applying Lemma \ref{primepolymajor} with $\alpha=a=q=1$,
\begin{align*}
\sum_{z=1}^M\psi_{\W }^\Delta(z-1)\lambda_{1,\W
W}(z)=&\sum_{z=1}^M\psi_{\W }^\Delta(z-1)+O(\psi_{\W
}^\Delta(M)Me^{-c\sqrt{\log M}})\leq2\psi_{\W }(M).
\end{align*}
Therefore
\begin{align*}
&\bigg|\sum_{\substack{r\in R}}\tilde{a}(r)\tilde{a}(-r)(\tilde{\beta}(r)^2\tilde{\beta}(-r)^2-1)\bigg(\sum_{z=1}^M\psi_{\W }^\Delta(z-1)\lambda_{1,\W W}(z)e(-\psi_{\W }(z)r/N)\bigg)\bigg|\\
\leq&2^{16}\epsilon^2
\sum_{\substack{r\in R}}|\tilde{a}(r)|^2\cdot\bigg|\sum_{z=1}^M\psi_{\W }^\Delta(z-1)\lambda_{1,\W W}(z)e(-\psi_{\W }(z)r/N)\bigg|\\
\leq&2^{17}\epsilon^2\psi_{\W }(M)|R|.\\
\end{align*}
In view of Lemma \ref{primerestriction} with $\rho=5/2$, we have
$|R|\leq C''\eta^{-\frac{5}{2}}$. On the other hand, by H\"older
inequality,
\begin{align*}
&\bigg|\sum_{\substack{r\not\in R}}\tilde{a}(r)\tilde{a}(-r)(\tilde{\beta}(r)^2\tilde{\beta}(-r)^2-1)\bigg(\sum_{z=1}^M\psi_{\W }^\Delta(z-1)\lambda_{1,\W W}(z)e(-\psi_{\W }(z)r/N)\bigg)\bigg|\\
\leq&2\sup_{r\not\in R}|\tilde{a}(r)|^\frac{1}{\varrho}
\bigg(\sum_{\substack{r\not\in
R}}|\tilde{a}(r)|^\frac{2\varrho-1}{\varrho-1}\bigg)^{\frac{\varrho-1}{\varrho}}\bigg(\sum_{\substack{r\not\in
R}}\bigg|\sum_{z=1}^M\psi_{\W }^\Delta(z-1)\lambda_{1,\W W}(z)e(-\psi_{\W }(z)r/N)\bigg|^\varrho\bigg)^\frac1\varrho\\
\leq&2\eta^\frac{1}{\varrho}\cdot
C((2\varrho-1)/(\varrho-1))^{1-\frac{1}{\varrho}}\cdot(|a_{k-t+1}|C'(\varrho))^\frac1\varrho
N,
\end{align*}
where in the last step we apply Lemma \ref{primerestriction} with
$\rho=(2\varrho-1)/(\varrho-1)$ and Lemma
\ref{restrictionprimepoly} with $\rho=\varrho$.
All are done.
\end{proof}

\begin{Lem} If $\epsilon^{|R|}\geq 2\log\log w/w$, then
$|a'(x)|\leq2/N$ for any $x\in\Z_N$.
\end{Lem}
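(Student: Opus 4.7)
Since the Bohr set $B$ is symmetric (because $\|(-x)r/N\|=\|xr/N\|$), the Fourier coefficient $\tilde{\beta}(r)$ is real-valued, so $\tilde{\beta}(r)^2=|\tilde{\beta}(r)|^2$, and Plancherel gives the decomposition
\[
a'(x) \;=\; \frac{\tilde{a}(0)}{N}\;+\;\frac{1}{N}\sum_{0\neq r\in R}\tilde{a}(r)\tilde{\beta}(r)^{2}\,e(rx/N)\;+\;\frac{1}{N}\sum_{r\notin R}\tilde{a}(r)\tilde{\beta}(r)^{2}\,e(rx/N).
\]
The main term is $\tilde{a}(0)/N=\|a\|_{1}/N$, which by the pointwise majoration $a\leq\lambda_{b,\W^{t},N}$ and Siegel--Walfisz (applied exactly as in Lemma~\ref{primepolymajor}) is at most $(1+o(1))/N$. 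Since $a$ and $\beta$ are non-negative, $a'\ge 0$, so for the bound $|a'(x)|\le 2/N$ it suffices to show that the two Fourier tails together contribute at most $(1-o(1))/N$ in absolute value.

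\textbf{Tail over $r\notin R$.} Here $|\tilde a(r)|<\eta$, so I would split $|\tilde a(r)|\tilde\beta(r)^{2}=|\tilde a(r)|^{1-s}\cdot|\tilde a(r)|^{s}\tilde\beta(r)^{2}$ for a suitable $s\in(0,1)$ and apply H\"older. The restriction estimate (Lemma~\ref{primerestriction}) controls $\sum_r|\tilde a(r)|^{\rho}\leq C(\rho)$ for any $\rho>2$, while Parseval together with $\|\tilde\beta\|_\infty\leq1$ yields $\sum_r\tilde\beta(r)^{2q}\leq N/|B|$ for any $q\geq1$. Combining these,
\[
\frac{1}{N}\sum_{r\notin R}|\tilde a(r)|\tilde\beta(r)^{2} \;\ll_\rho\; \frac{\eta^{1-s}}{N}\Big(\frac{N}{|B|}\Big)^{(\rho-s)/\rho}.
\]
The hypothesis $\epsilon^{|R|}\geq 2\log\log w/w$, combined with the standard pigeonhole lower bound $|B|\geq\epsilon^{|R|}N$ for Bohr sets, forces $N/|B|\leq w/(2\log\log w)$, so the above becomes $o(1/N)$ for appropriate choices of $\rho$ and $s$.

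\textbf{Tail over $r\in R\setminus\{0\}$.} By the very definition of $B$, $\|xr/N\|\leq\epsilon$ for all $x\in B$ and $r\in R$, so $\tilde\beta(r)=1+O(\epsilon^{2})$ on $R$ and in particular $\tilde\beta(r)^{2}\leq 1$. Using Lemma~\ref{primerestriction} to bound $|R|\leq C(\rho)\eta^{-\rho}$, H\"older gives $\sum_{r\in R}|\tilde a(r)|\leq|R|^{1-1/\rho}C(\rho)^{1/\rho}\ll C(\rho)\eta^{1-\rho}$. The $\tilde\beta^{2}-1$ piece contributes only $O(\epsilon^{2})$ times this bound (which is harmless for small $\epsilon$), while the leading contribution $\frac{1}{N}\sum_{0\neq r\in R}\tilde a(r)e(rx/N)$ is absorbed by a second H\"older estimate against the restriction bound.

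\textbf{Main obstacle.} The whole argument is a delicate bookkeeping exercise: the parameters $\rho,s,\eta,\epsilon$ must be tuned so that the two Fourier tails together come in below $(1-o(1))/N$. The hypothesis $\epsilon^{|R|}\geq 2\log\log w/w$ is precisely calibrated so that the Bohr ball $B$ is large enough that the factor $N/|B|$ arising from $\|\tilde\beta\|_{2q}$ in the H\"older step is controlled by $w/(2\log\log w)$ — exactly the threshold needed for the restriction estimate of Lemma~\ref{primerestriction} to close up the bound and yield the constant $2/N$ in the conclusion.
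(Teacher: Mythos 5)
Your plan runs the Fourier expansion on $a$ itself, and that is where it breaks down, in both tails. For $r\in R\setminus\{0\}$ you have, by the very definition of $R$, $|\tilde a(r)|\ge\eta$, and since $B$ is the Bohr set attached to $R$, $\tilde\beta(r)^2=1+O(\epsilon^2)$ there: the convolution with $\beta*\beta$ is designed to \emph{preserve} these large coefficients, not to damp them. The only bound the restriction estimate (Lemma \ref{primerestriction}) gives is $\sum_{r\in R}|\tilde a(r)|\le |R|^{1-1/\rho}\,C(\rho)^{1/\rho}\ll\eta^{-5/2}$, a large constant for small $\eta$, and nothing rules out these terms aligning in phase at the worst $x$; so this tail cannot be pushed below $(1-o(1))/N$ by any "second H\"older estimate". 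For $r\notin R$ your bound carries the factor $(N/|B|)^{(\rho-s)/\rho}\ge (N/|B|)^{1/2}$, while the hypothesis only gives $N/|B|\le\epsilon^{-|R|}\le w/(2\log\log w)$, which tends to infinity (in this section $\eta$ and $\epsilon$ are fixed constants depending on $\delta$, while $w\to\infty$); hence $\eta^{1-s}(N/|B|)^{(\rho-s)/\rho}$ is unbounded and the tail is not $o(1/N)$. In short, the $L^\infty$ bound is not a consequence of the Fourier data of $a$ (restriction plus the size of $B$) alone: the indicator $\1_A$ is arbitrary, so any proof must exploit more than the spectrum of $a$.

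The missing step -- and what the paper does, simply by quoting \cite[Lemma 6.3]{Green05} -- is to use positivity first: since $0\le a\le\lambda_{b,\W^t,N}$ pointwise and $\beta\ge0$, one has $a'\le\lambda_{b,\W^t,N}*\beta*\beta$, and the Fourier expansion is applied to this \emph{majorant}, not to $a$. After the $W$-trick the nonzero Fourier coefficients of $\lambda_{b,\W^t,N}$ are uniformly small, of size $O(\log\log w/w)$ (every modulus surviving the trick exceeds $w$, so $1/\phi(q)\ll\log\log q/q\le C\log\log w/w$; this is where the Siegel--Walfisz/Vinogradov-type input enters), while $\sum_r|\tilde\beta(r)|^2=N/|B|\le\epsilon^{-|R|}\le w/(2\log\log w)$. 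The product of these two quantities is $O(1)$, which together with the main term $\tilde\lambda(0)/N\le(1+o(1))/N$ gives the bound $2/N$. Thus the hypothesis $\epsilon^{|R|}\ge 2\log\log w/w$ is calibrated against the Fourier decay of the $W$-tricked prime measure (equivalently, against the non-concentration of primes on translates of $B+B$), not against the restriction estimate as your closing paragraph asserts. Your treatment of the main term and the pigeonhole bound $|B|\ge\epsilon^{|R|}N$ are fine, but without the majorization step the argument cannot close.
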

\begin{proof}
See \cite[Lemma 6.3]{Green05}.
\end{proof}

Let $A'=\{x\in\Z_N:\,a'(x)\geq\frac{1}{16}\delta N^{-1}\}$. Then
$$
\frac{2}{N}|A'|+\frac{\delta}{16N}(N-|A'|)\geq\sum_{x\in\Z_N}a'(x)=\sum_{x\in\Z_N}a(x)\geq\frac{\delta}{8},
$$
whence
$|A'|/N\geq\delta/32$.
Let $A_1'=A'\cap[1,(N-1)/2]$ and
$$
A_2'=\{x-(N-1)/2:\,x\in A'\cap[(N+1)/2,N-1]\}.
$$
Clearly there exists $i\in\{1,2\}$ such that
$|A_i'|/N\geq\delta/64$. Without loss of generality, we may assume
that $|A_1'|/N\geq\delta/64$. Applying Theorem
\ref{diffprimepoly2}, we know that
\begin{align*}
&|\{(x,y,z):\,x,y\in A_1', z\in\Lambda_{1,\W W}\cap[1,M],
x-y=\psi_\W(z)\}|\\
\geq& c(\delta/64,a_{k-t+1})\frac{\W W(N/2)^{1+\frac{1}{k}}(a_1\W
^{k-t})^{-\frac{1}{k}}}{\phi(\W W)\log N}.
\end{align*}
Let $c'=\frac{1}{16k}c(\delta/64,a_{k-t+1})$. Clearly
\begin{align*}
|\{(x,y,z):\,x,y\in A_1', z\in\Lambda_{1,\W W}\cap[1,c'M],
x-y=\psi_{\W }(z)\}|\leq\frac{\W W(c'M)}{\phi(\W W)\log M}N.
\end{align*}
Therefore
\begin{align*}
&|\{(x,y,z):\,x,y\in A_1', z\in\Lambda_{1,\W ^tW}\cap(c'M,M],
x-y=\psi_{\W }(z)\}|\\
\geq& \frac{c(\delta/64,a_{k-t+1})}{8}\frac{\W
WN^{1+\frac{1}{k}}(a_1\W ^{k-t})^{-\frac{1}{k}}}{\phi(\W W)\log
N}.
\end{align*}
It follows that
\begin{align*}
&\sum_{\substack{x,y\in A_1'\\1\leq z\leq
M\\ x-y=\psi_{\W }(z)}}\psi_{\W }^\Delta(z-1)\lambda_{1,\W W}(z)\\
\geq&\frac{c(\delta/64,a_{k-t+1})}{8}\frac{\W
WN^{1+\frac{1}{k}}(a_1\W ^{k-t})^{-\frac{1}{k}}}{\phi(\W W)\log
N}\cdot\frac{\psi_{\W }^\Delta(c'M)\phi(\W W)\log M}{2\W W}&\\
\geq&\frac{c(\delta/64,a_{k-t+1})c'^{k-1}}{64}N^2.
\end{align*}
So
\begin{align*}
&\sum_{\substack{x,y\in\Z_N\\ 1\leq z\leq M\\
x-y=\psi_{\W }(z)}}a(x)a(y)\psi_{\W }^\Delta(z-1)\lambda_{1,\W W}(z)\\
\geq&
\sum_{\substack{x,y\in\Z_N\\ 1\leq z\leq M\\
x-y=\psi_{\W }(z)}}a'(x)a'(y)\psi_{\W }^\Delta(z-1)\lambda_{1,\W W}(z)-C(\epsilon^2\eta^{-\frac{5}{2}}+\eta^{\frac{1}{\varrho}})\\
\geq&\frac{\delta^2}{2^{8}N^2}
\sum_{\substack{x,y\in A_1'\\ 1\leq z\leq M\\
x-y=\psi_{\W }(z)}}\psi_{\W }^\Delta(z-1)\lambda_{1,\W W}(z)-C(\epsilon^2\eta^{-\frac{5}{2}}+\eta^{\frac{1}{\varrho}})\\
\geq&c''(\delta,a_{k-t+1})-C(\epsilon^2\eta^{-\frac{5}{2}}+\eta^{\frac{1}{\varrho}}).
\end{align*}
Finally, we may choose $\eta, \epsilon>0$
satisfying $\epsilon^{C''\eta^{-5/2}}\geq 2\log\log w/w$ such that
$C(\epsilon^2\eta^{-\frac{5}{2}}+\eta^{\frac{1}{\varrho}})<c''(\delta,a_{k-t+1})/2$,
whenever $w$ is sufficiently large. Hence
\begin{align*}
\sum_{\substack{x,y\in\Z_N\\ 1\leq z\leq M\\
x-y=\psi_{\W }(z)}}a(x)a(y)\psi_{\W }^\Delta(z-1)\lambda_{1,\W
W}(z)\geq\frac{c''(\delta,a_{k-t+1})}{2}>0
\end{align*}
for sufficiently large $N$.
\qed

\section*{Appendix:\ Exponential Sums on Polynomials of Prime Variables}

\begin{Lem}
\begin{equation}
\sum_{x=1}^Nd_k^2(x)\ll N(\log N)^{k^2-1},
\end{equation}
where
$$
d_k(x)=|\{(a_1,\ldots,a_k):\,a_1,\ldots,a_k\in\Z^+, a_1\cdots
a_k=x\}|.
$$
\end{Lem}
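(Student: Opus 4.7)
The plan is to prove the pointwise inequality $d_k(n)^2\leq d_{k^2}(n)$ for every positive integer $n$, and then combine this with the standard divisor-sum estimate $\sum_{n\leq N}d_r(n)\ll_r N(\log N)^{r-1}$ applied with $r=k^2$.

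Since $d_k$ is multiplicative, so is $d_k^2$, and $d_{k^2}$ is multiplicative as well. Hence the pointwise bound reduces to its prime-power form
$$
\binom{a+k-1}{k-1}^{\!2}\leq\binom{a+k^2-1}{k^2-1}\qquad(a\geq 0,\ k\geq 1).
$$
I would prove this by induction on $a$. The cases $a=0$ and $a=1$ give equality on both sides. For the inductive step from $a-1$ to $a$, compare the multiplicative increments
$$
\frac{\binom{a+k-1}{k-1}^{\!2}}{\binom{a+k-2}{k-1}^{\!2}}=\left(\frac{a+k-1}{a}\right)^{\!2},\qquad \frac{\binom{a+k^2-1}{k^2-1}}{\binom{a+k^2-2}{k^2-1}}=\frac{a+k^2-1}{a}.
$$
A direct expansion gives the key identity
$$
a(a+k^2-1)-(a+k-1)^2=(k-1)^2(a-1)\geq 0\qquad(a\geq 1),
$$
so the first ratio is at most the second, and the induction closes.

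For the divisor estimate, I would induct on $r$ using $d_r=d_{r-1}\ast 1$: the case $r=1$ is trivial, and
$$
\sum_{n\leq N}d_r(n)=\sum_{a\leq N}\sum_{b\leq N/a}d_{r-1}(b)\ll\sum_{a\leq N}\frac{N}{a}\left(\log\frac{N}{a}\right)^{\!r-2}\ll N(\log N)^{r-1}.
$$
Applying this with $r=k^2$ and combining with the pointwise inequality delivers the lemma.

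The only non-routine ingredient is the algebraic identity $a(a+k^2-1)-(a+k-1)^2=(k-1)^2(a-1)$ underlying the binomial-coefficient inequality; the remaining steps (multiplicativity and the classical divisor-sum estimate) are entirely standard.
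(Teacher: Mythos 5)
Your argument is correct, and it is worth noting that the paper states this lemma without any proof, treating it as a classical divisor-function estimate, so there is no internal argument to compare against; your route is one of the standard ones. The details check out: $d_k$ and $d_{k^2}$ are multiplicative with $d_k(p^a)=\binom{a+k-1}{k-1}$, the increment ratios $\binom{a+k-1}{k-1}/\binom{a+k-2}{k-1}=(a+k-1)/a$ and $\binom{a+k^2-1}{k^2-1}/\binom{a+k^2-2}{k^2-1}=(a+k^2-1)/a$ are computed correctly, and the identity $a(a+k^2-1)-(a+k-1)^2=(k-1)^2(a-1)\ge 0$ does close the induction, yielding $d_k(n)^2\le d_{k^2}(n)$ for all $n$; combined with $\sum_{n\le N}d_r(n)\ll_r N(\log N)^{r-1}$ at $r=k^2$, this is exactly the lemma. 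One cosmetic point: in your induction for the divisor sum, the hypothesis $\sum_{b\le x}d_{r-1}(b)\ll x(\log x)^{r-2}$ degenerates for $1\le x<2$ (the left side is $1$ while $\log x$ can vanish), so it is cleaner to carry the bound in the form $\ll_r x(1+\log x)^{r-2}$ or $\ll_r x(\log 2x)^{r-2}$; nothing downstream changes, since $\sum_{a\le N}\frac{N}{a}\bigl(1+\log(N/a)\bigr)^{r-2}\ll N(\log N)^{r-1}$. An equivalent standard alternative would be to note that $\sum_n d_k(n)^2n^{-s}=\zeta(s)^{k^2}G(s)$ with $G$ absolutely convergent in $\Re s>1/2$ and conclude by a convolution argument, but your pointwise comparison $d_k^2\le d_{k^2}$ is simpler and fully adequate.
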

Let $K=2^{k-1}$.
\begin{Lem}
\label{welysum} Let $\psi(x)=a_1 x^k+a_2x^{k-1}+\cdots+a_kx$ be a
polynomial with real coefficients and $a_1\in\Z^+$. Then
\begin{align}
\sum_{1\leq x\leq V}e(\alpha\psi(x))\ll
V^{1-\frac{k}{K}}\bigg(V^{k-1}+V^{\frac{k}{2}}(\log
V)^{\frac{k^2-2k}{2}}\bigg(\sum_{y=1}^{V^{k-1}}\min\{V,\|\alpha
k!a_1y\|^{-1})\bigg)^{\frac{1}{2}}\bigg)^{\frac{1}{K}}
\end{align} for
any real $\alpha$. In particular, if $|\alpha-a/q|\leq q^{-2}$ with
$(a,q)=1$, then
\begin{align}
\sum_{1\leq x\leq V}e(\alpha\psi(x))\ll
V\bigg(V^{-\frac{1}{K}}+a_1^\frac{1}{2K}(\log(a_1qV))^{\frac{(k-1)^2}{2K}}\bigg(\frac{1}{q}+\frac{1}{V}+\frac{q}{a_1V^k}\bigg)^{\frac{1}{2K}}\bigg).
\end{align}
\end{Lem}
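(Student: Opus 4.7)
The plan is to prove both bounds via classical Weyl differencing, tracking the dependence on $a_1$ explicitly.

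First I would establish the differencing identity. Setting $S = \sum_{1 \leq x \leq V} e(\alpha \psi(x))$, the squaring step $|S|^2 = \sum_{|h| < V} \sum_{x \in I_h} e(\alpha(\psi(x+h) - \psi(x)))$, with $I_h$ an appropriate subinterval of $[1,V]$, replaces $\psi$ by a polynomial of degree $k-1$ with leading coefficient $k a_1 h$. Iterating this $k-1$ times doubles the exponent each time, reaching $|S|^K$ with $K = 2^{k-1}$, and reduces $\psi$ to a linear polynomial in $x$ with leading coefficient $k! a_1 h_1 h_2 \cdots h_{k-1}$; the lower-order terms $a_j x^{k-j+1}$ of $\psi$ only affect the constant term after $k-1$ differencings. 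Bounding the innermost $x$-sum by $\min\{V, \|\alpha k! a_1 h_1 \cdots h_{k-1}\|^{-1}\}$ via geometric summation and extracting $V^{K-k}$ from the interval widths gives
\begin{align*}
|S|^K \ll V^{K-k}\bigg(V^{k-1} + \sum_{\substack{|h_1|,\ldots,|h_{k-1}| \leq V \\ h_1 \cdots h_{k-1} \neq 0}} \min\{V, \|\alpha k! a_1 h_1 \cdots h_{k-1}\|^{-1}\}\bigg),
\end{align*}
the $V^{k-1}$ absorbing tuples with some $h_i = 0$.

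Next I convert the multidimensional sum into a one-dimensional one. Setting $y = |h_1 \cdots h_{k-1}|$, each integer $1 \leq y \leq V^{k-1}$ is represented at most $2^{k-1} d_{k-1}(y)$ times. Cauchy--Schwarz in the form $\sum_y d_{k-1}(y) g(y) \leq V^{1/2}(\sum_y d_{k-1}(y)^2)^{1/2}(\sum_y g(y))^{1/2}$ with $g(y) = \min\{V, \|\alpha k! a_1 y\|^{-1}\}$ (using $g \leq V$ on one of the two $g^{1/2}$ factors), combined with the divisor-square estimate $\sum_{y \leq V^{k-1}} d_{k-1}(y)^2 \ll V^{k-1}(\log V)^{(k-1)^2-1} = V^{k-1}(\log V)^{k^2-2k}$ supplied by the first lemma of the appendix, then produces the factor $V^{k/2}(\log V)^{(k^2-2k)/2}(\sum_y \min\{V,\|\alpha k!a_1 y\|^{-1}\})^{1/2}$ as the bound for the off-diagonal term. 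Taking $K$-th roots yields the first inequality.

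For the second bound I substitute into the first the standard Vinogradov-type estimate
\begin{align*}
\sum_{y \leq V^{k-1}} \min\{V, \|\alpha k! a_1 y\|^{-1}\} \ll a_1 \log(a_1 q V)\bigg(\frac{V^k}{q} + V^{k-1} + q\bigg),
\end{align*}
obtained by passing from $\alpha$ to $\beta = \alpha k! a_1$, finding a rational approximation of $\beta$ in lowest terms (with denominator $q^* = q/\gcd(q, k! a_1)$), and applying the usual $\sum_{n \leq N} \min\{U, \|n\beta\|^{-1}\} \ll (N/q^* + 1)(q^* + U \log(q^* U))$. After routine arithmetic the $(\log V)^{(k^2-2k)/2}$ combines with the extra $\log(a_1 qV)$ factor inside the $1/(2K)$-th power to produce exponent $(k-1)^2/(2K)$, reproducing the stated form. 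The main technical obstacle is handling the $\sum\min$ estimate when $\gcd(q, k!a_1)$ is small, since the naive rational approximation of $\beta$ need not satisfy $|\beta - a^*/q^*| \leq 1/(q^*)^2$; this must be addressed either by invoking Dirichlet's theorem to produce a better denominator for $\beta$, or by tracking the factor $k!a_1$ through the standard proof of the $\sum\min$ lemma. Either route yields the stated $a_1$-dependence after slightly more careful bookkeeping than in the coefficient-free case.
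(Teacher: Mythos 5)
Your first inequality is proved exactly as in the paper: Weyl differencing $k-1$ times, the observation that the lower coefficients of $\psi$ only survive in the constant term so the innermost sum is $\ll\min\{V,\|\alpha k!a_1h_1\cdots h_{k-1}\|^{-1}\}$, the passage to $y=|h_1\cdots h_{k-1}|$ with multiplicity $O(d_{k-1}(y))$, and Cauchy--Schwarz together with $\sum_{y\le V^{k-1}}d_{k-1}(y)^2\ll V^{k-1}(\log V)^{k^2-2k}$. No issues there.

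The gap is in the second inequality, and it sits exactly at the point you flag and then defer. The whole content of this lemma beyond the textbook Weyl inequality is the explicit $a_1$-dependence when the Diophantine hypothesis $|\alpha-a/q|\le q^{-2}$ is given for $\alpha$ and not for $\beta=k!a_1\alpha$. Your estimate for $\sum_{y\le V^{k-1}}\min\{V,\|\beta y\|^{-1}\}$ is only asserted: the reduced fraction $a^*/q^*$ of $k!a_1a/q$, with $q^*=q/\gcd(q,k!a_1)$, satisfies only $|\beta-a^*/q^*|\le k!a_1/q^2$, which is not $\le (q^*)^{-2}$ unless $k!a_1\le\gcd(q,k!a_1)^2$, so the standard lemma does not apply; and taking a fresh Dirichlet approximation to $\beta$ produces a denominator $q'$ with no control in terms of $q$, so the resulting bound cannot be converted into the stated one, which must be in terms of the given $q$. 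The paper avoids re-approximating $\beta$ altogether: since $V\le k!a_1V^k/z$ for $z=k!a_1y\le k!a_1V^{k-1}$, one has $\min\{V,\|\alpha k!a_1y\|^{-1}\}\le\min\{k!a_1V^k/z,\|\alpha z\|^{-1}\}$, and enlarging the sum from the multiples of $k!a_1$ to all $z\le k!a_1V^{k-1}$ only adds nonnegative terms; Vaughan's Lemma 2.2 then applies directly with the given $a/q$ for $\alpha$ and gives $\ll k!a_1V^k\log(2k!a_1V^kq)\bigl(\tfrac1q+\tfrac1V+\tfrac{q}{k!a_1V^k}\bigr)$, which is what produces the stated third term $q/(a_1V^k)$. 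Note also that even granting your asserted intermediate bound, its last term is $a_1q$ rather than $q$, so it would only give $\bigl(\tfrac1q+\tfrac1V+\tfrac{q}{V^k}\bigr)^{1/(2K)}$ in the conclusion, i.e.\ a result weaker than the lemma by a factor $a_1^{1/(2K)}$ in that term; the ``routine arithmetic'' and ``careful bookkeeping'' you invoke are precisely the steps that were not carried out.
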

\begin{proof}
Define the intervals $I_j(V;h_1,\ldots,h_j)$ by
$I_1(V;h_1)=[1,V]\cap[1-h_1,V-h_1]$ and
$$
I_{j+1}(V;h_1,\ldots,h_{j+1})=I_j(V;h_1,\ldots,h_j)\cap\{x:\,x+h_{j+1}\in
I_j(V;h_1,\ldots,h_j)\}.
$$
For $j\geq 1$, we know (cf.\cite{Vaughan97}[Lemma 2.3])that
\begin{align*}
\bigg|\sum_{1\leq x\leq V}e(\alpha \psi(x))\bigg|^{2^j}
\leq&(2V)^{2^j-j-1}\sum_{-V<h_1,\ldots,h_{j}<V}T_{j}(V;h_1,\ldots,h_{j}),
\end{align*}
where
$$
T_j(V;h_1,\ldots,h_j)=\sum_{x\in
I_j(V;h_1,\ldots,h_j)}e(\Delta_j(\alpha \psi(x);h_1,\ldots,h_j)).
$$
In particular,
$$
T_{k-1}(V;h_1,\ldots,h_{k-1})=\sum_{x\in
I_{k-1}(V;h_1,\ldots,h_{k-1})}e(\alpha h_1\cdots h_{k-1}
g_{k-1}(x;h_1,\ldots,h_{k-1})),
$$
where
$$
g_{k-1}(x;h_1,\ldots,h_{k-1})=k!
a_1(x+(h_1+\cdots+h_{k-1})/2)+(k-1)!a_2.
$$
Therefore
\begin{align*}
&\sum_{1\leq x\leq V}e(\alpha\psi(x))\\
\ll&V^{1-\frac{k}{K}}\bigg(\sum_{-V<h_1,\ldots,h_{k-1}<V}\bigg|\sum_{1\leq
x\leq V}e(\alpha
k!a_1h_1\cdots h_{k-1}x)\bigg|\bigg)^{\frac{1}{K}}\\
\ll&V^{1-\frac{k}{K}}\bigg(V^{k-1}+\sum_{\substack{-V<h_1,\ldots,h_{k-1}<V\\
h_1\cdots h_{k-1}\not=0}}\min\{V,\|\alpha
k!a_1h_1\cdots h_{k-1}\|^{-1}\}\bigg)^{\frac{1}{K}}\\
\leq&V^{1-\frac{k}{K}}\bigg(V^{k-1}+\sum_{1\leq y\leq
V^{k-1}}d_{k-1}(y)\min\{V,\|\alpha
k!a_1y\|^{-1}\}\bigg)^{\frac{1}{K}}\\
\leq&V^{1-\frac{k}{K}}\bigg(V^{k-1}+\bigg(\sum_{1\leq y\leq
V^{k-1}}d_{k-1}(y)^2\bigg)^{\frac{1}{2}}\bigg(\sum_{1\leq y\leq
V^{k-1}}\min\{V,\|\alpha
k!a_1y\|^{-1}\}^2\bigg)^{\frac{1}{2}}\bigg)^{\frac{1}{K}}\\
\ll&V^{1-\frac{k}{K}}\bigg(V^{k-1}+V^{\frac{k}{2}}(\log
V)^{\frac{k^2-2k}{2}}\bigg(\sum_{1\leq y\leq
V^{k-1}}\min\{V,\|\alpha k!a_1
y\|^{-1}\}\bigg)^{\frac{1}{2}}\bigg)^{\frac{1}{K}}.
\end{align*}
Finally, if $|\alpha-a/q|\leq q^{-2}$ with $(a,q)=1$, then by
Lemma 2.2 of \cite{Vaughan97}, we have
\begin{align*}
\sum_{1\leq y\leq V^{k-1}}\min\{V,\|\alpha
k!a_1y\|^{-1}\}\leq&\sum_{y=1}^{k!a_1V^{k-1}}\min\{k!a_1V^k/y,\|\alpha
y\|^{-1}\}\\
\ll&
k!a_1V^k\log(2k!a_1V^kq)\bigg(\frac{1}{q}+\frac{1}{V}+\frac{q}{k!a_1V^k}\bigg).
\end{align*}
We are done.
\end{proof}

\begin{Lem}
\label{sumii1a} Suppose that $\psi(x,y)=\sum_{1\leq i,j\leq
k+1}a_{ij}x^{k-i+1}y^{k-j+1}$ is a polynomial with real
coefficients. Suppose that $a_{11}\in\Z^+$ and $a_{12}=0$. Then
\begin{align}&\sum_{1\leq x\leq U}\bigg|\sum_{1\leq y\leq
V}e(\alpha\psi(x,y))\bigg|\notag\\\ll&
UV\bigg(U^{-\frac{1}{K^2}}+V^{-\frac{1}{K^2}}+a_{11}^{\frac{1}{4K^2}}(\log(a_{11}qUV))^{\frac{3k^2-2k+1}{4K^2}}\bigg(\frac{1}{q}+\frac{1}{U}+\frac{q}{a_{11}U^{k}V^k}\bigg)^{\frac{1}{4K^2}}\bigg)
\end{align}
provided that $|\alpha-a/q|\leq q^{-2}$ with $(a,q)=1$.
\end{Lem}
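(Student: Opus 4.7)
\medskip

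The plan is to bound $S=\sum_{x=1}^{U}|\sum_{y=1}^V e(\alpha\psi(x,y))|$ by invoking Lemma \ref{welysum} twice, interleaved with H\"older's inequality. Each Weyl application contributes a factor of $1/K$ in the main exponent and a factor of $1/(2K)$ in the Diophantine-approximation exponent, which is why the target estimate contains $1/K^2$ and $1/(4K^2)$.

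First, for each fixed $x$, I regard $\psi(x,y)$ as a polynomial in $y$ of degree $k$ with leading coefficient $A(x):=\sum_{i=1}^{k+1}a_{i1}x^{k-i+1}$; the polynomial $A$ itself has degree $k$ in $x$ with leading coefficient $a_{11}\in\Z^+$ (since $a_{12}=0$ removes the only competitor of the same $x$-degree). Applying the first form of Lemma \ref{welysum}, which only requires a real leading coefficient, gives
\[
\bigg|\sum_{y=1}^V e(\alpha\psi(x,y))\bigg|\ll V^{1-k/K}\Big(V^{k-1}+V^{k/2}L_V\,E(x)^{1/2}\Big)^{1/K},
\]
where $L_V=(\log V)^{(k^2-2k)/2}$ and $E(x)=\sum_{u=1}^{V^{k-1}}\min\{V,\|\alpha k!A(x)u\|^{-1}\}$. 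Using $(a+b)^{1/K}\leq a^{1/K}+b^{1/K}$, summing over $x$, and applying H\"older with exponent $2K$ to the piece carrying $E(x)^{1/(2K)}$, I reach
\[
S\ll UV\cdot V^{-1/K}+V^{1-k/(2K)}U^{1-1/(2K)}L_V^{1/K}\Big(\sum_{x=1}^{U}E(x)\Big)^{1/(2K)}.
\]

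The task reduces to controlling $\sum_{x}E(x)=\sum_{u=1}^{V^{k-1}}\sum_{x=1}^{U}\min\{V,\|\alpha k!u\,A(x)\|^{-1}\}$ after swapping the order of summation. To handle the inner $x$-sum, I convert the $\min$-function into an exponential-sum expression (via a Fourier/Erd\H os--Tur\'an expansion of a narrow indicator on $\T$, or equivalently by counting $x$ for which $\alpha k!u\,A(x)$ lies close to an integer and applying partial summation), which makes Lemma \ref{welysum} available a second time, now applied to $A(x)$ as a polynomial in $x$ of degree $k$ with integral leading coefficient $a_{11}$. This yields a further inner sum of the form $\sum_{v=1}^{U^{k-1}}\min\{U,\|\alpha\,(k!)^{2}k\,a_{11}uv\|^{-1}\}$, which, after clearing the constant $(k!)^{2}k$ from $q$, falls under Lemma 2.2 of \cite{Vaughan97} because the hypothesis $|\alpha-a/q|\leq q^{-2}$ provides Diophantine approximation of $\alpha(k!)^{2}ka_{11}$. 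The resulting single-variable bound involves $(1/q+1/U+q/(a_{11}U^{k}V^{k}))$ and logarithmic factors; raising to the outer $1/(4K^2)$-th power after combining with the earlier exponents gives precisely the Diophantine factor and the logarithmic exponent $(3k^2-2k+1)/(4K^2)$ stated in the lemma.

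The principal obstacle is bookkeeping rather than a conceptual leap: two nested Weyl applications, sandwiched around a H\"older step and a summation swap, generate a thicket of logarithmic factors, divisor-sum losses $d_{k-1}$, and combinatorial constants, and tracing the exponents $(k^2-2k)/2$, $(k-1)^{2}/2$, and so on through both rounds to arrive at $(3k^2-2k+1)/(4K^2)$ must be done with care. A secondary point is that the ``leading coefficient'' $A(x)$ in the first Weyl step is not a fixed positive integer but varies with $x$; this causes no difficulty because only the general-leading-coefficient form of Lemma \ref{welysum} is needed at that stage, and the integrality hypothesis $a_{11}\in\Z^+$ is invoked only in the second Weyl application, which is applied to $A(x)$ itself.
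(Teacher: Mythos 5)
There is a genuine gap, and it sits exactly in the step you describe as ``bookkeeping rather than a conceptual leap''. Your ordering --- Weyl in $y$ at the first power for each fixed $x$, then H\"older, then an Erd\H{o}s--Tur\'an/Fourier treatment of $\sum_{x\le U}\min\{V,\|\alpha k!uA(x)\|^{-1}\}$ --- cannot produce the exponents in the statement. Once you expand the min-function via a Fourier majorant you get $N_u(\delta)\ll \delta U+\sum_{m\le \delta^{-1}}m^{-1}|\sum_{x\le U}e(m\alpha k!uA(x))|$, so the $x$-exponential sum enters \emph{linearly}; Weyl's inequality applied to it is never smaller than $U^{1-1/K}$ (the diagonal term $U^{k-1}$ inside the $1/K$-th power), and the min-sum it returns appears only inside a $1/(2K)$-th power. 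Feeding this through your displayed reduction, where $\sum_x E(x)$ is then raised to the $1/(2K)$-th power, the diagonal contribution gives $UV\cdot U^{-1/(2K^2)}(\log)^{O(1)}$ and the Diophantine factor appears at exponent $1/(8K^2)$ --- both strictly weaker than the claimed $UV\,U^{-1/K^2}$ and $(1/q+1/U+q/(a_{11}U^kV^k))^{1/(4K^2)}$ (note also the lemma must hold when $U$ is only polylogarithmic in $V$, where the extra logs are not absorbable). The paper's proof avoids this loss by reversing the order: first H\"older in $x$ to the power $K$, then Weyl differencing of the $y$-sum \emph{inside} that $K$-th power (producing $g_{k-1}(x,y;h_1,\ldots,h_{k-1})$, where the hypothesis $a_{12}=0$ is what keeps $\deg_x\psi_2\le k-1$ --- it has nothing to do with $A(x)$, whose leading coefficient is $a_{11}$ regardless), then H\"older again and only then Lemma \ref{welysum} applied to the complete $x$-sum raised to the $K$-th power; the trivial terms then give exactly $UV(U^{-1/K^2}+V^{-1/K^2})$, and the variables $h_1,\ldots,h_{k-1},y,z$ are merged into one variable with a $d_{k+1}$ weight, removed by Cauchy--Schwarz, before Vaughan's Lemma 2.2, which is how the exponent $1/(4K^2)$ and the log power $(3k^2-2k+1)/(4K^2)$ arise.

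Two further points. Your claimed output of the second step, a single sum $\sum_{v\le U^{k-1}}\min\{U,\|\alpha(k!)^2k\,a_{11}uv\|^{-1}\}$, is not what the method yields: the frequency variable $m$ from the Fourier expansion does not disappear, and ``partial summation'' alone does not convert a min-sum into exponential sums. And the remark about ``clearing the constant $(k!)^2k$ from $q$'' is not legitimate as stated: $|\alpha-a/q|\le q^{-2}$ does not give a usable approximation to $c\alpha$ with controlled denominator; the paper instead keeps the constant inside the summation variable, bounding $\min\{U,\|\alpha(k!)^2a_{11}z\|^{-1}\}$ by $\min\{(k!)^2a_{11}U^kV^k/z',\|\alpha z'\|^{-1}\}$ with $z'=(k!)^2a_{11}z$ before invoking Vaughan's Lemma 2.2 for $\alpha$ itself.
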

\begin{proof}
Write $\psi(x,y)=\sum_{j=1}^{k+1}\psi_j(x)y^{k-j+1}$. Then by
H\"{o}lder inequality we have
\begin{align*}
&\sum_{1\leq x\leq U}\bigg|\sum_{1\leq y\leq V}e(\alpha \psi(x,y))\bigg|\notag\\
\leq&U^{1-\frac{1}{K}}\bigg(\sum_{1\leq x\leq U}\bigg|\sum_{1\leq y\leq V}e(\alpha \psi(x,y))\bigg|^K\bigg)^{\frac{1}{K}}\notag\\
\ll&U^{1-\frac{1}{K}}V^{1-\frac{k}{K}}\bigg(\sum_{1\leq x\leq
U}\sum_{\substack{|h_1|,\ldots,|h_{k-1}|\leq V\\ y\in
I_{k-1}(V;h_1,\ldots,h_{k-1})}}e(\alpha h_1\cdots h_{k-1}
g_{k-1}(x,y;h_1,\ldots,h_{k-1}))\bigg)^{\frac{1}{K}}\\
\leq&U^{1-\frac{1}{K}}V^{1-\frac{k}{K^2}}\bigg(\sum_{\substack{|h_1|,\ldots,|h_{k-1}|\leq V\\
y\in I_{k-1}(V;h_1,\ldots,h_{k-1})}}\bigg|\sum_{1\leq x\leq
U}e(\alpha h_1\cdots h_{k-1}
g_{k-1}(x,y;h_1,\ldots,h_{k-1}))\bigg|^K\bigg)^{\frac{1}{K^2}},
\end{align*}
where
\begin{align*}
&g_{k-1}(x,y;h_1,\ldots,h_{k-1})\\
=&k! \psi_1(x)(y+(h_1+\cdots+h_{k-1})/2)+(k-1)!\psi_{2}(x).
\end{align*}
Note that $\deg \psi_{2}\leq k-1$ since $a_{12}=0$. Thus applying
Lemma \ref{welysum},
\begin{align*}
&\bigg|\sum_{1\leq x\leq U}e(\alpha h_1\cdots h_{k-1}
g_{k-1}(x,y;h_1,\ldots,h_{k-1}))\bigg|^K\\
\ll&U^{K-k}\bigg(U^{k-1}+U^\frac{k}{2}(\log U)^\frac{k^2-2k}{2}\bigg(\sum_{1\leq z\leq U^{k-1}}\min\{U,\|\alpha (k!)^2a_{11}h_1\cdots h_{k-1}yz\|^{-1}\}\bigg)^{\frac{1}{2}}\bigg)\\
\end{align*}
provided that $h_1\cdots h_{k-1}\not=0$. So
\begin{align*}
&\sum_{\substack{|h_1|,\ldots,|h_{k-1}|\leq V\\
y\in I_{k-1}(V;h_1,\ldots,h_{k-1})}}\bigg|\sum_{1\leq x\leq
U}e(\alpha
h_1\cdots h_{k-1} g_{k-1}(x,y;h_1,\ldots,h_{k-1}))\bigg|^K\\
\ll&U^{K}V^{k-1}+\sum_{\substack{|h_1|,\ldots,|h_{k-1}|\leq
V\\h_1\cdots h_{k-1}\not=0\\y\in
I_{k-1}(V;h_1,\ldots,h_{k-1})}}\bigg|\sum_{1\leq x\leq U}e(\alpha
h_1\cdots h_{k-1} g_{k-1}(x,y;h_1,\ldots,h_{k-1}))\bigg|^K\\
\ll&(\log
U)^\frac{k^2-2k}{2}U^{K-\frac{k}{2}}\sum_{\substack{|h_1|,\ldots,|h_{k-1}|\leq
V\\h_1\cdots h_{k-1}\not=0\\y\in
I_{k-1}(V;h_1,\ldots,h_{k-1})}}\bigg(\sum_{1\leq z\leq U^{k-1}}\min\{U,\|\alpha (k!)^2a_{11}h_1\cdots h_{k-1}yz\|^{-1}\}\bigg)^{\frac{1}{2}}\\
&+U^{K-1}V^{k}+U^{K}V^{k-1}.
\end{align*}
Furthermore,
\begin{align*}
&\sum_{\substack{|h_1|,\ldots,|h_{k-1}|\leq V\\h_1\cdots
h_{k-1}\not=0\\y\in
I_{k-1}(V;h_1,\ldots,h_{k-1})}}\bigg(\sum_{1\leq z\leq U^{k-1}}\min\{U,\|\alpha (k!)^2a_{11}h_1\cdots h_{k-1}yz\|^{-1}\}\bigg)^{\frac{1}{2}}\\
\ll&V^\frac{k}{2}\bigg(\sum_{\substack{|h_1|,\ldots,|h_{k-1}|\leq
V\\h_1\cdots
h_{k-1}\not=0\\ 1\leq y\leq V}}\sum_{1\leq z\leq U^{k-1}}\min\{U,\|\alpha (k!)^2a_{11}h_1\cdots h_{k-1}yz\|^{-1}\}\bigg)^{\frac{1}{2}}\\
\ll&V^\frac{k}{2}\bigg(\sum_{1\leq z\leq U^{k-1}V^k}d_{k+1}(z)\min\{U,\|\alpha (k!)^2a_{11}z\|^{-1}\}\bigg)^{\frac{1}{2}}\\
\ll&V^\frac{k}{2}\bigg(\sum_{1\leq z\leq
U^{k-1}V^k}d_{k+1}(z)^2\bigg)^{\frac{1}{4}}
\bigg(\sum_{1\leq z\leq U^{k-1}V^k}\min\{U,\|\alpha (k!)^2a_{11}z\|^{-1}\}^2\bigg)^{\frac{1}{4}}\\
\ll&V^\frac{3k}{4}U^{\frac{k}{4}}(\log(UV))^{\frac{k^2+2k}{4}}
\bigg(\sum_{1\leq z\leq (k!)^2a_{11}U^{k-1}V^k}\min\{(k!)^2a_{11}U^{k}V^k/z,\|\alpha z\|^{-1}\}\bigg)^{\frac{1}{4}}\\
\ll&V^\frac{3k}{4}U^{\frac{k}{4}}(\log(UV))^{\frac{k^2+2k}{4}}\cdot
a_{11}^{\frac{1}{4}}U^\frac{k}{4}V^\frac{k}{4}(\log(a_{11}qUV))^{\frac{1}{4}}
\bigg(\frac{1}{q}+\frac{1}{U}+\frac{q}{(k!)^2a_{11}U^{k}V^k}\bigg)^{\frac{1}{4}}.
\end{align*}
All are done.
\end{proof}

\begin{Lem}
\label{sumii2a} Suppose that $\psi(x,y)=\sum_{1\leq i,j\leq
k+1}a_{ij}x^{k-i+1}y^{k-j+1}$ is a polynomial with real
coefficients. Suppose that $a_{11}\in\Z^+$ and $a_{12}=0$. Then
\begin{align} &\sum_{U\leq x\leq 2U}\bigg|\sum_{1\leq y\leq
V/x}e(\alpha\psi(x,y))\bigg|\notag\\
\ll&V\bigg(U^{\frac{1}{K}}V^{-\frac{1}{K}}+U^{-\frac{1}{K^2}}+
+a_{11}^\frac{1}{4K^2}(\log(a_{11}qUV))^{\frac{3k^2-2k+1}{4K^2}}\bigg(\frac{1}{q}+\frac{1}{U}+\frac{q}{a_{11}V^k}\bigg)^{\frac{1}{4K^2}}\bigg)
\end{align}
provided that $|\alpha-a/q|\leq q^{-2}$ with $(a,q)=1$.
\end{Lem}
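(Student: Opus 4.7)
The plan is to mimic the proof of Lemma \ref{sumii1a} step by step, accounting for the $x$-dependent upper limit $V/x$ in the inner $y$-sum (equivalently, the hyperbolic constraint $xy\le V$). Since $x\ge U$, the $y$-range has length at most $V/U$, which is what allows the overall bound to be of order $V$ rather than $UV$. The first step is to apply H\"older's inequality in $x$ with exponent $K$:
\[
\sum_{U\le x\le 2U}\bigg|\sum_{1\le y\le V/x}e(\alpha\psi(x,y))\bigg|\le U^{1-\frac{1}{K}}\bigg(\sum_{U\le x\le 2U}\bigg|\sum_{1\le y\le V/x}e(\alpha\psi(x,y))\bigg|^K\bigg)^{\frac{1}{K}}.
\]

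Next, apply Weyl differencing $k-1$ times to the inner $K$-th power, exactly as in the proof of Lemma \ref{sumii1a}, exploiting the hypothesis $a_{12}=0$ so that the polynomial becomes linear in $y$ after differencing. All Weyl differences $h_1,\ldots,h_{k-1}$ live in $[-V/x,V/x]\subseteq[-V/U,V/U]$, and the interval $I_{k-1}(V/x;h_1,\ldots,h_{k-1})$ has length at most $V/U$. Then swap the order of summation, placing the $h_i$-sums outside the $x$-sum; the constraint $|h_i|<V/x$ simply truncates the $x$-range to $[U,\min\{2U,V/\max_i|h_i|\}]$. Bound the resulting innermost linear-in-$y$ exponential by $\min\{V/x,\|\alpha(k!)^2 a_{11}h_1\cdots h_{k-1}xyz\|^{-1}\}$, collapse the multiple sums over $h_i,y,z$ into a single sum weighted by the divisor function $d_{k+1}$, invoke Cauchy--Schwarz together with the moment bound $\sum_{z\le X}d_{k+1}(z)^2\ll X(\log X)^{(k+1)^2-1}$, and apply Lemma 2.2 of \cite{Vaughan97} in conjunction with $|\alpha-a/q|\le q^{-2}$ to bound the resulting Diophantine sum.

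The key quantitative change from Lemma \ref{sumii1a} is that the total range of the product $h_1\cdots h_{k-1}\cdot y\cdot z$ is now $(V/U)^{k-1}\cdot(V/U)\cdot U^{k-1}=V^k/U$ rather than $V^{k-1}\cdot V\cdot U^{k-1}=U^{k-1}V^k$, so that the factor $a_{11}U^kV^k$ in the denominator of Lemma \ref{sumii1a} is replaced here by $a_{11}V^k$. The new ``trivial'' summand $U^{1/K}V^{-1/K}$ in the bound comes from the degenerate Weyl contribution $h_1\cdots h_{k-1}=0$: this contributes $U^K(V/U)^{k-1}$ inside the $K$-th root, and after the H\"older prefactor $U^{1-1/K}$ and the normalization by $V$ becomes exactly $U^{1/K}V^{-1/K}$. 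The main obstacle is not any new idea but rather the careful bookkeeping of $x$-dependent ranges after the sum-swap; fortunately, since $|h_i|<V/x\le V/U$, the $h_i$-ranges depend only weakly on $x$, and the swap is essentially harmless.
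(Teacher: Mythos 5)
Your outer skeleton (H\"older in $x$ with exponent $K$, Weyl differencing in $y$ using $a_{12}=0$, swapping the $h_i$-sums outside, collapsing with $d_{k+1}$, Cauchy--Schwarz with the moment bound, Vaughan's Lemma 2.2) matches the beginning and the end of the paper's argument, but the central step is missing. After the $y$-differencing the inner sum is linear in $y$ with coefficient $\alpha\,k!\,h_1\cdots h_{k-1}\psi_1(x)$, where $\psi_1(x)=a_{11}x^k+a_{21}x^{k-1}+\cdots+a_{k+1,1}$ has arbitrary \emph{real} lower-order coefficients and only $a_{11}$ is an integer. Your proposed bound $\min\{V/x,\|\alpha(k!)^2a_{11}h_1\cdots h_{k-1}xyz\|^{-1}\}$ is not what this step yields: $y$ cannot occur in a bound for the $y$-sum, the variable $z$ has never been introduced, and the true coefficient involves the full degree-$k$ polynomial $\psi_1(x)$, not $a_{11}x$. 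Since $\psi_1(x)$ is not an integer (its lower coefficients are real), the quantities $\|\alpha k!h_1\cdots h_{k-1}\psi_1(x)\|$ cannot be collapsed into a divisor-weighted sum $\sum_z\min\{\cdot,\|\alpha(k!)^2a_{11}z\|^{-1}\}$, so Vaughan's Lemma 2.2 is not applicable at that point; integrality of all the $a_{i1}$ is exactly the extra hypothesis of Lemmas \ref{sumii1b} and \ref{sumii2b}, which are proved by the route you describe and give weaker bounds.

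What the paper does instead, and what your sketch omits, is a second application of Weyl's inequality: keeping the exponential, it swaps once more so that $h_1,\ldots,h_{k-1}$ and $y$ are fixed and $x$ runs over the interval $\bar I(y;h_1,\ldots,h_{k-1})$ (here the monotonicity $I_j(Q_1)\subseteq I_j(Q_2)$ is needed to know this set is an interval -- this coupling of the $y$-range with $x$ is the part of the hyperbolic bookkeeping that is not ``harmless''), removes the weight $x^{k-K}$ by partial summation, and then applies Lemma \ref{welysum} to the degree-$k$ polynomial in $x$. This second Weyl step is what eliminates the unknown real coefficients of $\psi_1$, is the only source of the auxiliary variable $z\leq U^{k-1}$ that you invoke in your range count, and is responsible for the exponents $1/K^2$ and $1/(4K^2)$ and for the term $U^{-1/K^2}$ in the statement; your sketch takes only one $K$-th root and so has no mechanism producing any $K^2$ exponent. (A smaller slip: including the factor $(V/x)^{K-k}$ from the differencing, the degenerate tuples $h_1\cdots h_{k-1}=0$ contribute $\ll U(V/U)^{K-1}$ inside the $K$-th root, not $U^K(V/U)^{k-1}$; it is this count that produces $U^{1/K}V^{-1/K}$.)
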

\begin{proof}
Write $\psi(x,y)=\sum_{j=1}^{k+1}\psi_j(x)y^{k-j+1}$. And let
$$
T_{k-1}(x,Q;h_1,\ldots,h_{k-1})=\sum_{y\in
I_{k-1}(Q;h_1,\ldots,h_{k-1})}e(\alpha h_1\cdots h_{k-1}
g_{k-1}(x,y;h_1,\ldots,h_{k-1}))
$$
where
\begin{align*}
&g_{k-1}(x,y;h_1,\ldots,h_{k-1})\\
=&k! \psi_1(x)(y+(h_1+\cdots+h_{k-1})/2)+(k-1)!\psi_2(x).
\end{align*}
Then
\begin{align*}
&\sum_{U\leq x\leq 2U}\bigg|\sum_{1\leq y\leq V/x}e(\alpha \psi(x,y))\bigg|\notag\\
\leq&U^{1-\frac{1}{K}}\bigg(\sum_{U\leq x\leq
2U}(V/x)^{K-k}\sum_{|h_1|,\ldots,|h_{k-1}|\leq V/x}
T_{k-1}(x,\floor{V/x};h_1,\ldots,h_{k-1})\bigg)^{\frac{1}{K}}\\
\leq&U^{1-\frac{1}{K}}V^{1-\frac{k}{K}}\bigg(\sum_{|h_1|,\ldots,|h_{k-1}|\leq
V/U}\sum_{\substack{U\leq x\leq 2U\\ x\leq\min_{1\leq
i<k}V/|h_i|}}x^{k-K}T_{k-1}(x,\floor{V/x};h_1,\ldots,h_{k-1})\bigg)^{\frac{1}{K}}.
\end{align*}

By an induction on $j$, it is not difficult to prove that
$I_j(Q_1;h_1,\ldots,h_j)\subseteq I_j(Q_2;h_1,\ldots,h_j)$ if
$Q_1\leq Q_2$. Hence for any $y$, the set
$$
\bar{I}(y;h_1,\ldots,h_j)=\{x:\,y\in
I_j(\floor{V/x};h_1,\ldots,h_j)\}
$$
is exactly an interval. Then
\begin{align*}
&\sum_{\substack{U\leq x\leq
2U\\ 1\leq x\leq\min_{1\leq i<k}V/|h_i|}}x^{k-K}T_{k-1}(x;h_1,\ldots,h_{k-1})\\
=&\sum_{y\in
I_{k-1}(V/U;h_1,\ldots,h_{k-1})}\sum_{\substack{U\leq x\leq 2U\\
1\leq x\leq\min_{1\leq i<k}V/|h_i|\\
x\in\bar{I}(y;h_1,\ldots,h_{k-1})}}x^{k-K}e(\alpha h_1\cdots
h_{k-1} g_{k-1}(x,y;h_1,\ldots,h_{k-1})).
\end{align*}
By Lemma \ref{welysum} we know that
\begin{align*}
&\sum_{x=Q_1}^{Q_2}x^{k-K}e(\alpha h_1\cdots h_{k-1}
g_{k-1}(x,y;h_1,\ldots,h_{k-1}))\\
=&Q_2^{k-K}\sum_{x=1}^{Q_2}e(\alpha h_1\cdots h_{k-1}
g_{k-1}(x,y;h_1,\ldots,h_{k-1}))\\
&-Q_1^{k-K}\sum_{x=1}^{Q_1-1}e(\alpha h_1\cdots h_{k-1}
g_{k-1}(x,y;h_1,\ldots,h_{k-1}))
\\
&-\sum_{X=Q_1}^{Q_2-1}((X+1)^{k-K}-X^{k-K})\sum_{x=1}^Xe(\alpha
h_1\cdots h_{k-1}
g_{k-1}(x,y;h_1,\ldots,h_{k-1}))\\
\ll&Q_1^{k-K-1}Q_2^{2-\frac{k}{K}}\bigg(Q_2^{k-1}+Q_2^{\frac{k}{2}}(\log
Q_2)^\frac{k^2-2k}{2}\\
&\cdot\bigg(\sum_{z=1}^{Q_2^{k-1}}\min\{Q_2,\|\alpha
(k!)^2a_{11}h_1\cdots h_{k-1}y
z\|^{-1}\}\bigg)^{\frac{1}{2}}\bigg)^{\frac{1}{K}}.
\end{align*}
Therefore
\begin{align*}
&\sum_{|h_1|,\ldots,|h_{k-1}|\leq V/U}\sum_{\substack{U\leq x\leq 2U\\
1\leq x\leq\min_{1\leq i<k}V/|h_i|}}x^{k-K}T_{k-1}(x;h_1,\ldots,h_{k-1})\\
\ll&V^{k-1}U^{2-K}+U^{k-K+1-\frac{k}{K}}
\sum_{\substack{|h_1|,\ldots,|h_{k-1}|\leq V/U\\
h_1\cdots h_{k-1}\not=0}}\sum_{1\leq y\leq V/U}
\bigg(U^{k-1}\\
&+U^{\frac{k}{2}}(\log U)^\frac{k^2-2k}{2}\bigg(\sum_{1\leq
z\leq(2U)^{k-1}}\min\{2U,\|\alpha (k!)^2a_{11}h_1\cdots
h_{k-1}y z\|^{-1}\}\bigg)^{\frac{1}{2}}\bigg)^{\frac{1}{K}}\\
\leq&V^{k-1}U^{2-K}+U^{1-K} V^{k-\frac{k}{K}}
\bigg(V^kU^{-1}+U^{\frac{k}{2}}(\log
U)^\frac{k^2-2k}{2}\\
&\cdot\sum_{\substack{|h_1|,\ldots,|h_{k-1}|\leq V/U\\
h_1\cdots h_{k-1}\not=0}}\sum_{1\leq y\leq V/U}
\bigg(\sum_{1\leq
z\leq(2U)^{k-1}}\min\{2U,\|\alpha (k!)^2a_{11}h_1\cdots h_{k-1}y
z\|^{-1}\}\bigg)^{\frac{1}{2}}\bigg)^{\frac{1}{K}}.
\end{align*}
Also,
\begin{align*}
&\sum_{\substack{|h_1|,\ldots,|h_{k-1}|\leq V/U\\
h_1\cdots h_{k-1}\not=0}}\sum_{1\leq y\leq V/U}\bigg(\sum_{1\leq
z\leq(2U)^{k-1}}\min\{2U,\|\alpha
(k!)^2a_{11}h_1\cdots h_{k-1}y z\|^{-1}\}\bigg)^{\frac{1}{2}}\\
\ll&V^{\frac{k}{2}}U^{-\frac{k}{2}}\bigg(\sum_{\substack{|h_1|,\ldots,|h_{k-1}|\leq V/U\\
h_1\cdots h_{k-1}\not=0}}\sum_{1\leq y\leq V/U}\sum_{1\leq
z\leq(2U)^{k-1}}\min\{2U,\|\alpha
(k!)^2a_{11}h_1\cdots h_{k-1}y z\|^{-1}\}\bigg)^{\frac{1}{2}}\\
\leq&V^{\frac{k}{2}}U^{-\frac{k}{2}}\bigg(\sum_{z=1}^{2^{k-1}V^kU^{-1}}d_{k+1}(z)\min\{2U,\|\alpha
(k!)^2a_{11}z\|^{-1}\}\bigg)^{\frac{1}{2}}\\
\ll&V^{\frac{k}{2}}U^{-\frac{k}{2}}\cdot
V^\frac{k}{4}(\log(V^kU^{-1}))^{\frac{k^2+2k}{4}}
\bigg(\sum_{1\leq z\leq2^{k-1}V^kU^{-1}}\min\{2U,\|\alpha
(k!)^2a_{11}z\|^{-1}\}\bigg)^{\frac{1}{4}}\\
\ll&V^{\frac{3k}{4}}U^{-\frac{k}{2}}(\log(VU^{-1}))^{\frac{k^2+2k}{4}}
\cdot
a_{11}^\frac{1}{4}V^\frac{k}{4}(\log(a_{11}qV))^{\frac{1}{4}}
\bigg(\frac{1}{q}+\frac{1}{U}+\frac{q}{a_{11}V^k}\bigg)^{\frac{1}{4}}.
\end{align*}
It follows that
\begin{align*}
&\sum_{U\leq x\leq 2U}\bigg|\sum_{1\leq y\leq V/x}e(\alpha \psi(x,y))\bigg|\notag\\
\ll&U^{1-\frac{1}{K}}V^{1-\frac{k}{K}}\bigg(V^{k-1}U^{2-K}+U^{1-K}
V^{k-\frac{k}{K}} \bigg(V^kU^{-1}\\
&+a_{11}^\frac{1}{4}V^{k}(\log(a_{11}qUV))^{\frac{3k^2-2k+1}{4}}\bigg(\frac{1}{q}+\frac{1}{U}+\frac{q}{a_{11}V^k}\bigg)^{\frac{1}{4}}\bigg)^{\frac{1}{K}}\bigg)^{\frac{1}{K}}\\
\ll&V\bigg(U^{\frac{1}{K}}V^{-\frac{1}{K}}+U^{-\frac{1}{K^2}}+
+a_{11}^\frac{1}{4K^2}(\log(a_{11}qUV))^{\frac{3k^2-2k+1}{4K^2}}\bigg(\frac{1}{q}+\frac{1}{U}+\frac{q}{a_{11}V^k}\bigg)^{\frac{1}{4K^2}}\bigg).
\end{align*}

\end{proof}

From Lemma \ref{sumii2a}, it is easily derived that
\begin{align} &\sum_{U_1\leq x\leq U_2}\sum_{y\leq
V/x}e(\alpha\psi(x,y))\notag\\
\ll&V\log
U_2\bigg(U_2^{\frac{1}{K}}V^{-\frac{1}{K}}+U_1^{-\frac{1}{K^2}}+
a_{11}^\frac{1}{4K^2}(\log(a_{11}qU_2V))^{\frac{3k^2-2k+1}{4K^2}}\bigg(\frac{1}{q}+\frac{1}{U_1}+\frac{q}{a_{11}V^k}\bigg)^{\frac{1}{4K^2}}\bigg).
\end{align}

\begin{Lem}
\label{sumii1b} Suppose that $\psi(x,y)=\sum_{1\leq i,j\leq
k+1}a_{ij}x^{k-i+1}y^{k-j+1}$ is a polynomial with real
coefficients and $a_{11},a_{21},\ldots,a_{k+1,1}\in\Z$. If
$a_{11}x^k+a_{21}x^{k-1}+\cdots+a_{k+1,1}\not=0$ for each $1\leq
x\leq U$, then
\begin{align} &\sum_{1\leq x\leq U}\bigg|\sum_{1\leq y\leq
V}e(\alpha\psi(x,y))\bigg|\notag\\
\ll&_\epsilon
UV\bigg(V^{-\frac{1}{K}}U^{\frac{1}{K}}+a_{*}^\frac{1}{2K}U^\frac{k}{2K}(\log(a_*qU^kV))^{\frac{(k-1)^2}{2K}}\bigg(\frac{1}{q}+\frac{U}{V}+\frac{q}{V^k}\bigg)^{\frac{1}{2K}}\bigg)
\end{align}
provided that $|\alpha-a/q|\leq q^{-2}$ with $(a,q)=1$, where
$a_*=|a_{11}|+|a_{21}|+\cdots+|a_{k+1,1}|$.
\end{Lem}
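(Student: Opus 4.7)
The bound is essentially a pointwise application of Weyl's inequality (Lemma~\ref{welysum}) to the inner sum over $y$, followed by summation in $x$. The hypothesis that $\psi_1(x) := a_{11}x^k + a_{21}x^{k-1} + \cdots + a_{k+1,1}$ is a nonzero integer for every $1 \leq x \leq U$ plays a dual role: it makes the inner polynomial (viewed as a polynomial in $y$) have nonzero integer leading coefficient, and it provides the lower bound $|\psi_1(x)| \geq 1$ that will let us discard a $|\psi_1(x)|^{-1}$ factor later.

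My plan proceeds as follows. First, for each fixed $x \in [1, U]$, I write $\psi(x, y) = \sum_{j=1}^{k+1} \psi_j(x) y^{k-j+1}$ and apply Lemma~\ref{welysum} to $\sum_{y \leq V} e(\alpha\psi(x, y))$, with the leading coefficient $|\psi_1(x)|$ playing the role of $a_1$. If $\psi_1(x) < 0$, I first replace $\alpha$ by $-\alpha$ (and $a$ by $q-a$), which preserves both the Diophantine hypothesis $|\alpha - a/q| \leq q^{-2}$ and the absolute value of the sum, while making the leading coefficient positive. This yields
\begin{equation*}
\Big|\sum_{y \leq V} e(\alpha\psi(x, y))\Big| \ll V\Big(V^{-1/K} + |\psi_1(x)|^{1/(2K)} L_x \Big(\tfrac{1}{q} + \tfrac{1}{V} + \tfrac{q}{|\psi_1(x)|V^k}\Big)^{1/(2K)}\Big),
\end{equation*}
where $L_x = (\log(|\psi_1(x)|qV))^{(k-1)^2/(2K)}$.

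Second, I sum over $1 \leq x \leq U$ and distribute the $1/(2K)$-th power across the three summands inside the parentheses using subadditivity. The $q/(|\psi_1(x)|V^k)$ summand becomes $\ll (q/V^k)^{1/(2K)}$ after invoking $|\psi_1(x)| \geq 1$, hence is independent of $x$. For the other two summands, the triangle-inequality bound $|\psi_1(x)| \leq (k+1)a_*U^k$ allows me to factor out $a_*^{1/(2K)}U^{k/(2K)}$. Replacing $L_x$ by its uniform upper bound $(\log(a_*qU^kV))^{(k-1)^2/(2K)}$ and summing the now $x$-independent expression gives a factor of $U$. The error contribution $UV\cdot V^{-1/K}$ from the first term inside the parentheses is absorbed into $UV \cdot V^{-1/K}U^{1/K}$ using $U^{1/K} \geq 1$, and the $1/V$ inside the parentheses is enlarged to $U/V$ using $U \geq 1$, matching the target bound exactly.

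The main obstacle is essentially bookkeeping: all the hard analytic work is already contained in Lemma~\ref{welysum}, so the proof reduces to verifying that the substitutions work and collecting terms cleanly. The only real subtlety is recognizing that the integrality of $a_{11}, \ldots, a_{k+1,1}$, together with $\psi_1(x) \neq 0$, yields $|\psi_1(x)| \geq 1$ for each $x$ in range --- a seemingly trivial but essential observation, since without it the $q/(|\psi_1(x)|V^k)$ term could not be replaced by the uniform $q/V^k$, and one would be stuck with an $x$-dependent residue that does not cleanly sum.
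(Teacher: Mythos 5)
Your proposal is correct and follows essentially the same route as the paper: fix $x$, apply Lemma~\ref{welysum} in the $y$-variable with leading coefficient $\psi_1(x)$, use $|\psi_1(x)|\geq 1$ (nonzero integer) to replace $q/(|\psi_1(x)|V^k)$ by $q/V^k$ and $|\psi_1(x)|\ll a_*U^k$ to make the remaining factors $x$-independent, then sum over $x$. Your extra remarks (handling a negative leading coefficient by conjugation, and enlarging $V^{-1/K}$ to $V^{-1/K}U^{1/K}$ and $1/V$ to $U/V$) are harmless bookkeeping that the paper leaves implicit.
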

\begin{proof}
Write $\psi(x,y)=\sum_{j=1}^{k+1}\psi_j(x)y^{k-j+1}$. Then by
Lemma \ref{welysum} we have
\begin{align*} &\sum_{1\leq x\leq U}\bigg|\sum_{1\leq y\leq
V}e(\alpha\psi(x,y))\bigg|\\
\ll&\sum_{1\leq x\leq U}
V\bigg(V^{-\frac{1}{K}}+|\psi_1(x)|^\frac{1}{2K}(\log(|\psi_1(x)|qV))^{\frac{(k-1)^2}{2K}}\bigg(\frac{1}{q}+\frac{1}{V}+\frac{q}{|\psi_1(x)|V^k}\bigg)^{\frac{1}{2K}}\bigg)\\
\ll&
UV\bigg(V^{-\frac{1}{K}}+a_{*}^\frac{1}{2K}U^\frac{k}{2K}(\log(a_*qU^kV))^{\frac{(k-1)^2}{2K}}\bigg(\frac{1}{q}+\frac{1}{V}+\frac{q}{V^k}\bigg)^{\frac{1}{2K}}\bigg).
\end{align*}
\end{proof}

\begin{Lem}
\label{sumii2b} Suppose that $\psi(x,y)=\sum_{1\leq i,j\leq
k+1}a_{ij}x^{k-i+1}y^{k-j+1}$ is a polynomial with real
coefficients and $a_{11},a_{21},\ldots,a_{k+1,1}\in\Z$. If
$a_{11}x^k+a_{21}x^{k-1}+\cdots+a_{k+1,1}\not=0$ for each $1\leq
x\leq U$, then
\begin{align} &\sum_{1\leq x\leq U}\bigg|\sum_{1\leq y\leq
V/x}e(\alpha\psi(x,y))\bigg|\notag\\
\ll&V(\log
U)\bigg(V^{-\frac{1}{K}}U^{\frac{1}{K}}+a_*^\frac{1}{2K}U^\frac{k}{2K}(\log(a_*qU^kV))^{\frac{(k-1)^2}{2K}}\bigg(\frac{1}{q}+\frac{U}{V}+\frac{q}{V^k}\bigg)^{\frac{1}{2K}}\bigg)
\end{align}
provided that $|\alpha-a/q|\leq q^{-2}$ with $(a,q)=1$, where
$a_*=|a_{11}|+|a_{21}|+\cdots+|a_{k+1,1}|$.
\end{Lem}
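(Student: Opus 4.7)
The strategy parallels Lemma \ref{sumii1b}: for each fixed $x$, apply Weyl's inequality (Lemma \ref{welysum}) to the inner sum over $y$, then sum over $x$. Write $\psi(x,y)=\sum_{j=1}^{k+1}\psi_j(x)y^{k-j+1}$, so the inner polynomial in $y$ has leading coefficient $\psi_1(x)=a_{11}x^k+a_{21}x^{k-1}+\cdots+a_{k+1,1}$. By hypothesis $\psi_1(x)$ is a nonzero integer for every $1\le x\le U$, so $1\le|\psi_1(x)|\le a_*U^k$. After replacing $\alpha$ by $-\alpha$ if necessary (which preserves the Diophantine condition $|\alpha-a/q|\le q^{-2}$), Lemma \ref{welysum} applied with $V\mapsto V/x$ and $a_1\mapsto|\psi_1(x)|$ gives
\[
\Bigl|\sum_{1\le y\le V/x}e(\alpha\psi(x,y))\Bigr|\ll \frac{V}{x}\left(\Bigl(\frac{V}{x}\Bigr)^{-\frac{1}{K}}+|\psi_1(x)|^{\frac{1}{2K}}L^{\frac{(k-1)^2}{2K}}\Bigl(\frac{1}{q}+\frac{x}{V}+\frac{qx^k}{|\psi_1(x)|V^k}\Bigr)^{\frac{1}{2K}}\right),
\]
where $L=\log(a_*qU^kV)$ uniformly dominates $\log(|\psi_1(x)|qV/x)$.

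Using the elementary inequality $(u+v+w)^{1/(2K)}\le u^{1/(2K)}+v^{1/(2K)}+w^{1/(2K)}$, the right-hand side splits into four pieces, which I sum separately over $1\le x\le U$. The first piece gives $V^{1-1/K}\sum_{x\le U}x^{1/K-1}\ll V^{1-1/K}U^{1/K}$, matching the $V^{-1/K}U^{1/K}$ term of the target. The $1/q$ piece, after bounding $|\psi_1(x)|^{1/(2K)}\le (a_*U^k)^{1/(2K)}$ and using $\sum_{x\le U}1/x\ll\log U$, contributes $V(\log U)a_*^{1/(2K)}U^{k/(2K)}q^{-1/(2K)}$. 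The $x/V$ piece contributes $V^{1-1/(2K)}a_*^{1/(2K)}U^{k/(2K)}\sum_{x\le U}x^{1/(2K)-1}\ll V a_*^{1/(2K)}U^{k/(2K)}(U/V)^{1/(2K)}$. Each is dominated by the corresponding term of the target.

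The only subtle piece is the fourth one, $qx^k/(|\psi_1(x)|V^k)$. A naive use of $|\psi_1(x)|\ge 1$ would leave $qx^k/V^k$ and, after maximizing $x^k\le U^k$, inflate the bound by an unwanted factor $U^{k/(2K)}$ beyond the target. The crux is the exact cancellation
\[
|\psi_1(x)|^{\frac{1}{2K}}\cdot\Bigl(\frac{qx^k}{|\psi_1(x)|V^k}\Bigr)^{\frac{1}{2K}}=\Bigl(\frac{qx^k}{V^k}\Bigr)^{\frac{1}{2K}},
\]
which removes $|\psi_1(x)|$ entirely from the denominator. The resulting $x$-sum becomes $V^{1-k/(2K)}q^{1/(2K)}\sum_{x\le U}x^{k/(2K)-1}\ll V U^{k/(2K)}(q/V^k)^{1/(2K)}$, since $k/(2K)<1$. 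After reinserting the logarithmic factor $L^{(k-1)^2/(2K)}$ and absorbing $1\le a_*^{1/(2K)}\log U$, this matches the $q/V^k$ contribution in the target. This cancellation, together with the careful handling of the truncated range $y\le V/x$ via the harmonic sum $\sum_x 1/x\ll\log U$ rather than a crude maximum over $x$, is where the actual work of the proof lies; everything else is direct bookkeeping in the manner of Lemma \ref{sumii1b}.
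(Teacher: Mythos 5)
Your proposal is correct and follows the paper's proof essentially verbatim: apply Lemma \ref{welysum} to the inner sum with length $V/x$ and leading coefficient $|\psi_1(x)|$ (a nonzero integer by hypothesis), then sum over $x$, with the harmonic sum producing the $\log U$ and the $|\psi_1(x)|^{1/(2K)}$ factor cancelling against the $q/(|\psi_1(x)|V^k)$ term to leave $q/V^k$. The paper compresses this into two displayed lines, so your explicit bookkeeping of the four pieces is just a spelled-out version of the same argument.
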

\begin{proof}
Write $\psi(x,y)=\sum_{j=1}^{k+1}\psi_j(x)y^{k-j+1}$. Now
$\psi_1(x)$ is a polynomial with integral coefficients. Hence by
Lemma \ref{welysum} we have
\begin{align*} &\sum_{1\leq x\leq U}\bigg|\sum_{1\leq y\leq
V/x}e(\alpha\psi(x,y))\bigg|\\
\ll&\sum_{1\leq x\leq U}
Vx^{-1}\bigg(V^{-\frac{1}{K}}x^{\frac{1}{K}}+|\psi_1(x)|^\frac{1}{2K}(\log(|\psi_1(x)|qV))^{\frac{(k-1)^2}{2K}}\bigg(\frac{1}{q}+\frac{x}{V}+\frac{qx^k}{|\psi_1(x)|V^k}\bigg)^{\frac{1}{2K}}\bigg)\\
\ll& V(\log
U)\bigg(V^{-\frac{1}{K}}U^{\frac{1}{K}}+a_{*}^\frac{1}{2K}U^\frac{k}{2K}(\log(a_*qU^kV))^{\frac{(k-1)^2}{2K}}\bigg(\frac{1}{q}+\frac{U}{V}+\frac{q}{V^k}\bigg)^{\frac{1}{2K}}\bigg).
\end{align*}

\end{proof}

\begin{Lem}
\label{sumii2w} Let $\psi(x)=a_1x^k+a_2x^{k-1}\cdots+a_kx$ be a
polynomial with integral coefficients and $a_1\in\Z^+$. Let
$A\geq1$ and $B>32k^2(k^2+K^2)A$. Suppose that $1\leq W,
a_1\leq(\log V)^A$ and $1\leq U\leq V^{1-\delta}$ for some
$\delta>0$. Then for any integer $b$ and $1\leq c,c'\leq W$ with
$cc'\equiv b\pmod{W}$, we have
\begin{align}
\sum_{\substack{1\leq x\leq U\\
x\equiv c\pmod{W}}}\bigg|\sum_{\substack{1\leq y\leq V/x\\ y\equiv
c'\pmod{W}}}e(\alpha \psi((xy-b)/W))\bigg|\ll_{A,B} V(\log
V)^{-\frac{B}{16k^2K^2}}
\end{align}
provided that $|\alpha-a/q|\leq q^{-2}$ with $(\log V)^B\leq q\leq
\psi(V)(\log V)^{-B}$ and $(a,q)=1$.
\end{Lem}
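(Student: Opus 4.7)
The plan is to absorb the residue conditions by the substitutions $x = Wx'+c$ and $y = Wy'+c'$, and then reduce the resulting double sum to the bilinear estimate of Lemma \ref{sumii2b}. Since $cc' \equiv b \pmod W$, the quantity $e := (cc'-b)/W$ is an integer, and direct computation gives
$$
\frac{xy-b}{W} \;=\; Wx'y' + cy' + c'x' + e.
$$
Hence $\Phi(x',y') := \psi((xy-b)/W)$ is a polynomial in $(x',y')$ with integer coefficients of bidegree at most $(k,k)$. Its coefficient of $(x'y')^k$ is the positive integer $a_1 W^k$; its coefficient of $y'^k$, viewed as a polynomial in $x'$, equals $a_1(Wx'+c)^k$, a polynomial with positive integer coefficients of total mass $a_* := a_1(W+c)^k \ll (\log V)^{(k+1)A}$ (using $W, a_1 \ll (\log V)^A$). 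In particular $a_* \neq 0$ for every $x' \geq 0$, so $\Phi$ satisfies the hypotheses of Lemma \ref{sumii2b}.

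After substitution the sum becomes $\sum_{x' \leq U'} |\sum_{y' \leq Y(x')} e(\alpha \Phi(x',y'))|$, where $U' = \floor{(U-c)/W}$ and $Y(x') = \floor{(V/(Wx'+c) - c')/W}$, the latter of order $V/(W^2 x')$ for $x' \geq 1$; the boundary contributions from $x'=0$ or $y'=0$ are $O(U+V/W)$ and negligible. To fit the shape $y' \leq V_1/x'$ required by Lemma \ref{sumii2b}, I split the $x'$-range dyadically into blocks $X \leq x' \leq 2X$, on which $x'Y(x')$ lies in a bounded ratio around $V_1 := V/W^2$, and use partial summation in $y'$ (or complete the interval by a Fourier kernel) to replace the varying cutoff $Y(x')$ by $V_1/x'$ at a cost of one factor $\log V$. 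Lemma \ref{sumii2b}, applied on each block with $\psi, U, V$ replaced by $\Phi, 2X, V_1$, and summed over the $O(\log V)$ blocks, then delivers a bound for $\Sigma$.

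Collecting the estimates yields a total bound of the form
$$
\Sigma \;\ll\; V(\log V)^{O_k(1)}\Bigl( V^{-1/K} U^{1/K} W^{1/K} + a_*^{1/(2K)}\,U^{k/(2K)}\bigl(1/q + U/V + qW^{2k}/V^k\bigr)^{1/(2K)} \Bigr).
$$
Under the standing $U \leq V^{1-\delta}$ the first piece gives a fixed power saving $V^{-\delta/K}$, which is negligible. The minor-arc range $(\log V)^B \leq q \leq \psi(V)(\log V)^{-B}$, together with $\psi(V) \ll (\log V)^A V^k$, makes both $1/q$ and $qW^{2k}/V^k$ at most $(\log V)^{-B+O_k(A)}$. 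Extracting $(2K)$-th roots and absorbing $a_*^{1/(2K)} \ll (\log V)^{O_k(A)}$ and the dyadic $(\log V)^{O(1)}$ losses, the total bound becomes $V(\log V)^{-B/(2K)+O_k(A)}$. The threshold $B > 32k^2(k^2+K^2)A$ in the statement is calibrated precisely so that this dominates the target $V(\log V)^{-B/(16k^2 K^2)}$.

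The main obstacle is accounting rather than strategy: every power of $W$, $a_1$, and $\log V$ entering via $a_*$, via $V_1 = V/W^2$, via $q/V_1^k$, or via the partial-summation loss must be confined within the budget $(\log V)^{O_k(A)}$, and one must verify that the explicit exponent $B/(16k^2K^2)$ survives after all these substitutions. The structural content --- the substitution absorbing the congruences, and the recognition that $\Phi$ fits Lemma \ref{sumii2b} --- is immediate once the identity $(xy-b)/W = Wx'y' + cy' + c'x' + e$ is spotted; no new exponential-sum input beyond the appendix lemmas is needed.
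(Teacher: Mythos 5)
Your reduction of the congruence conditions and the identification of the leading $y'$-coefficient polynomial $a_1(Wx'+c)^k$ are fine, but the plan of running the \emph{entire} $x$-range through Lemma \ref{sumii2b} does not work, and the failure is visible in your own displayed bound: the second term carries the factor $a_*^{1/(2K)}U^{k/(2K)}$, which your subsequent accounting silently drops. Lemma \ref{sumii2b} is a type-I estimate: it applies Weyl's inequality in $y$ alone for each fixed $x$ and sums trivially over $x$, so the quality of the bound degrades with the size of the coefficient $\psi_1(x)\approx a_1W^kx^k$, which is where the $U^{k/(2K)}$ comes from. Since the lemma must hold for all $q$ with $(\log V)^B\leq q\leq\psi(V)(\log V)^{-B}$, in the worst cases ($q$ a power of $\log V$, or $q$ near $\psi(V)$ so that $q/V^k$ is only logarithmically small) the bracket $\bigl(1/q+U/V+qW^{2k}/V^k\bigr)^{1/(2K)}$ saves only $(\log V)^{-B/(2K)+O_k(A)}$, which cannot offset $U^{k/(2K)}$ once $U$ is a power of $V$ (and $U\leq V^{1-\delta}$ permits $U$ as large as $V^{1-\delta}$). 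So the claimed total $V(\log V)^{-B/(2K)+O_k(A)}$ is simply not implied by your intermediate bound; the proposal only covers $x$ up to roughly $\min\{q^{1/(2k^2)},(V^k/q)^{1/(2k^2)}\}$.

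This is exactly the dichotomy the paper's proof is built around: it sets $U_*=\min\{q^{1/(2k^2)},(2V^k/W^kq)^{1/(2k^2)},U\}$, treats $1\leq x\leq U_*$ with the type-I Lemma \ref{sumii2b} (where $U_*^{k/(2K)}$ is harmless against $q^{-1/(2K)}$ and $(qW^k/V^k)^{1/(2K)}$), and treats $U_*<x\leq U$ with the genuinely bilinear Lemma \ref{sumii2a}, whose Weyl differencing in $y$ followed by a Weyl estimate in the $x$-sum yields a saving of the form $U_*^{-1/K^2}$ that does not deteriorate as the $x$-dependent coefficients grow; the hyperbolic cutoff $y\leq V/x$ is what Lemmas \ref{sumii2a} and \ref{sumii2b} are already formulated for, so no dyadic completion is needed. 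The missing ingredient in your write-up is therefore not bookkeeping but the range-splitting and the use of Lemma \ref{sumii2a} on the long-$x$ range; without it the stated range of $q$ cannot be reached.
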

\begin{proof}
Let
$$
U_*=\min\{q^{\frac{1}{2k^2}},(2V^k/W^kq)^{\frac{1}{2k^2}},U\}.
$$
Apparently $U_*\geq \min\{(\log V)^{\frac{B-(k+1)A}{2k^2}},U\}$
and
$$
U_*\leq (q\cdot 2V^k/W^kq)^{\frac{1}{4k^2}}\ll V^{\frac{1}{4k}}.
$$
Rewrite
\begin{align*}
&\sum_{\substack{1\leq x\leq U\\
x\equiv c\pmod{W}}}\bigg|\sum_{\substack{1\leq y\leq V/x\\y\equiv
c'\pmod{W}}}e(\alpha
\psi((xy-b)/W))\bigg|\\
=&\bigg(\sum_{\substack{1\leq x\leq U_*\\
x\equiv c\pmod{W}}}+\sum_{\substack{U_*<x\leq U\\
x\equiv c\pmod{W}}}\bigg)\bigg|\sum_{\substack{1\leq y\leq
V/x\\y\equiv c'\pmod{W}}}e(\alpha \psi((xy-b)/W))\bigg|.
\end{align*}

Clearly
\begin{align*}
&\sum_{\substack{1\leq x\leq U_*\\
x\equiv c\pmod{W}}}\bigg|\sum_{\substack{1\leq y\leq V/x\\y\equiv
c'\pmod{W}}}e(\alpha
\psi((xy-b)/W))\bigg|\\
=&\frac{1}{W}\sum_{j=1}^W\sum_{\substack{1\leq x\leq
U_*}}e(j(x-c)/W)\bigg|\sum_{\substack{1\leq y\leq V/Wx}}e(\alpha
\psi((x(Wy+c')-b)/W))+O(1)\bigg|\\
\leq&\sum_{\substack{1\leq x\leq U_*}}\bigg|\sum_{\substack{1\leq
y\leq (V/W)/x}}e(\alpha \psi(xy+(xc'-b)/W))\bigg|+O(U_*).
\end{align*}
By Lemma \ref{sumii2b},
\begin{align*}
&\sum_{\substack{1\leq x\leq U_*}}\bigg|\sum_{\substack{1\leq y\leq
(V/W)/x}}e(\alpha
\psi(xy+(xc'-b)/W))\bigg|\\
\ll&V\log
U_*\bigg(V^{-\frac{1}{K}}W^{\frac{1}{K}}U_*^{\frac{1}{K}}+a_{1}^\frac{1}{2K}U_*^\frac{k}{2K}
(\log(a_1qVW^{-1}U_*))^{\frac{(k-1)^2}{2K}}\bigg(\frac{1}{q}+\frac{U_*}{V}+\frac{qW^k}{V^k}\bigg)^{\frac{1}{2K}}\bigg).
\end{align*}
If $U_*=U$, then $U\leq (\log V)^{\frac{B-(k+1)A}{2k^2}}$ and
\begin{align*}
&V\log U_*\bigg(V^{-\frac{1}{K}}W^{\frac{1}{K}}U_*^{\frac{1}{K}}+a_{1}^\frac{1}{2K}U_*^\frac{k}{2K}(\log(a_1qVW^{-1}U_*))^{\frac{(k-1)^2}{2K}}\bigg(\frac{1}{q}+\frac{U_*}{V}+\frac{qW^k}{V^k}\bigg)^{\frac{1}{2K}}\bigg)\\
\ll&V\log U(V^{-\frac{1}{K}}W^{\frac{1}{K}}U^{\frac{1}{K}}+a_{1}^\frac{1}{2K}(\log V)^{\frac{(k-1)^2}{2K}}(U^{\frac{k}{2K}}q^{-\frac1{2K}}+U^\frac{k+1}{2K}V^{-\frac{1}{2K}}+(\log V)^\frac{(2k^2+k)A-(2k-1)B}{4kK}))\\
\ll&_{A,B}V(\log
V)^{1+\frac{(k-1)^2+(k+2)A}{2K}-\frac{(2k-1)B}{4kK}}.
\end{align*}
Below we assume that $U_*<U$, then $(\log
V)^{\frac{B-(k+1)A}{2k^2}}\ll U_* \ll V^{\frac{1}{4k}}$ and
\begin{align*}
&V\log U_*\bigg(V^{-\frac{1}{K}}W^{\frac{1}{K}}U_*^{\frac{1}{K}}+a_{1}^\frac{1}{2K}U_*^\frac{k}{2K}(\log(a_1qVW^{-1}U_*))^{\frac{(k-1)^2}{2K}}\bigg(\frac{1}{q}+\frac{U_*}{V}+\frac{qW^k}{V^k}\bigg)^{\frac{1}{2K}}\bigg)\\
\ll&V\log V(V^{-\frac{1}{K}}W^{\frac{1}{K}}U_*^{\frac{1}{K}}+a_{1}^\frac{1}{2K}(\log V)^{\frac{(k-1)^2}{2K}}(U_*^{\frac{k}{2K}}q^{-\frac1{2K}}+U_*^\frac{k+1}{2K}V^{-\frac{1}{2K}}+U_*^\frac{k-2k^2}{2K}))\\
\ll&_{A,B}V(\log
V)^{1+\frac{(k-1)^2+(k+2)A}{2K}-\frac{(2k-1)B}{4kK}}.
\end{align*}

On the other hand,
\begin{align*}
&\sum_{\substack{U_*<x\leq U\\
x\equiv c\pmod{W}}}\bigg|\sum_{\substack{1\leq y\leq V/x\\y\equiv
c'\pmod{W}}}e(\alpha
\psi((xy-b)/W))\bigg|\\
=&\sum_{\substack{(U_*-c)/W<x\leq
(U-c)/W}}\bigg|\sum_{\substack{1\leq y\leq V/(Wx+c)\\ y\equiv
c'\pmod{W}}}e(\alpha
\psi(xy+(yc-b)/W))\bigg|\\
=&\sum_{\substack{(U_*-c)/W<x\leq
(U-c)/W}}\bigg|\sum_{\substack{1\leq y\leq V/Wx\\ y\equiv
c'\pmod{W}}}e(\alpha
\psi(xy+(yc-b)/W))+O(V/W^2x^2)\bigg|\\
=&\sum_{\substack{(U_*-c)/W<x\leq
(U-c)/W}}\bigg|\sum_{\substack{1\leq y\leq V/Wx\\ y\equiv
c'\pmod{W}}}e(\alpha \psi(xy+(yc-b)/W))\bigg|+O(V/WU_*).
\end{align*}
Notice that
\begin{align*}
&\sum_{\substack{(U_*-c)/W<x\leq
(U-c)/W}}\bigg|\sum_{\substack{1\leq y\leq V/Wx\\ y\equiv
c'\pmod{W}}}e(\alpha
\psi(xy+(yc-b)/W))\bigg|\\
=&\frac{1}{W}\sum_{\substack{(U_*-c)/W<x\leq
(U-c)/W}}\bigg|\sum_{\substack{1\leq y\leq V/Wx}}e(\alpha
\psi(xy+(yc-b)/W))\sum_{j=1}^We((y-c')j/W)\bigg|\\
\leq&\max_{1\leq j\leq W}\sum_{\substack{(U_*-c)/W<x\leq
(U-c)/W}}\bigg|\sum_{\substack{1\leq y\leq V/Wx}}e(\alpha
\psi(xy+(yc-b)/W+(y-c')j/W)\bigg|.
\end{align*}
Hence by Lemma \ref{sumii2a},
\begin{align*}
&\sum_{\substack{(U_*-c)/W<x\leq
(U-c)/W}}\bigg|\sum_{\substack{1\leq y\leq (V/W)/x}}e(\alpha
\psi(xy+(yc-b)/W+(y-c')j/W)\bigg|\\
\ll&VW^{-1}\log(UW^{-1})
\bigg(U^{\frac{1}{K}}V^{-\frac{1}{K}}+U_*^{-\frac{1}{K^2}}W^{\frac{1}{K^2}}+
a_{1}^\frac{1}{4K^2}(\log V)^{\frac{3k^2-2k+1}{4K^2}}U_*^{-\frac{1}{4K^2}}W^{\frac{1}{4K^2}}\bigg)\\
\ll&_{A,B}V(\log
V)^{1+\frac{3k^2-2k+1+3A}{4K^2}-\frac{B}{8k^2K^2}}.
\end{align*}

\end{proof}

\begin{Lem}
\label{sumii1w} Let $\psi(x)=a_1x^k+a_2x^{k_1}\cdots+a_kx$ be a
polynomial with integral coefficients and $a_1\in\Z^+$. Let
$A\geq1$ and $B>16k^3A$. Suppose that $1\leq W,
a_1\leq(\log(UV))^A$. Let $g(x)$ be a polynomial with the degree
at most $k$ satisfying that the coefficient of $x^k$ in $g(Wx)$ is
an integer. Then for any integer $b$ and $1\leq c,c'\leq W$ with
$cc'\equiv b\pmod{W}$, we have
\begin{align}
\sum_{\substack{1\leq x\leq U\\
x\equiv c\pmod{W}}}\bigg|\sum_{\substack{1\leq y\leq V\\y\equiv
c'\pmod{W}}}e(\alpha(\psi((xy-b)/W)+g(y)))\bigg|\ll_{A,B}
UV(\log(UV))^{-\frac{B}{16kK^2}}
\end{align}
provided that $|\alpha-a/q|\leq q^{-2}$ with $(\log V)^B\leq q\leq
\psi(V)(\log V)^{-B}$ and $(a,q)=1$.
\end{Lem}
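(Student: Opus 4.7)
The plan is to mirror the proof of Lemma \ref{sumii2w} almost verbatim, replacing the Type II hyperbolic estimates (Lemmas \ref{sumii2a}, \ref{sumii2b}) by their Type I box-region counterparts (Lemmas \ref{sumii1a}, \ref{sumii1b}). Specifically, I would set the threshold
$$
U_* = \min\{q^{1/(2k^2)},\ (\psi(V)/q)^{1/(2k^2)},\ U\}
$$
and split the outer $x$-sum into the ranges $x \leq U_*$ and $U_* < x \leq U$. Under the minor-arc hypothesis $(\log V)^B \leq q \leq \psi(V)(\log V)^{-B}$ one then verifies $U_* \gg (\log(UV))^{(B-O(A))/(2k^2)}$ and $U_* \ll V^{1/(4k)}$, which are exactly the features exploited in Lemma \ref{sumii2w}.

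In the small range $x \leq U_*$, I would remove the congruence $x \equiv c \pmod W$ by inserting $\frac{1}{W}\sum_{j=1}^W e(j(x-c)/W)$ and bounding $|e(j(x-c)/W)| = 1$, and remove the congruence $y \equiv c' \pmod W$ by the substitution $y = Wy'+c'$ (at the cost of an $O(1)$ boundary error per $x$). The inner sum in $y'$ becomes
$$
\sum_{1 \leq y' \leq V/W} e(\alpha[\psi(xy' + (xc'-b)/W) + g(Wy'+c')]),
$$
and, viewed as a polynomial in $(x, y')$, its coefficient of $(y')^k$ as a polynomial in $x$ is $a_1 x^k + g_1 W^k$. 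This is an integer polynomial in $x$ (exactly the place where the hypothesis $g_1 W^k \in \Z$ is used), which matches the integrality requirement of Lemma \ref{sumii1b}. Applying that lemma, after possibly replacing $\alpha$ by $-\alpha$ to restore a positive leading coefficient and excluding at most one exceptional $x$ where $a_1 x^k + g_1 W^k = 0$ (contributing only $O(V)$), gives a saving of the shape $(\log V)^{-B/O(kK^2)}$ under the minor-arc hypothesis.

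In the large range $U_* < x \leq U$, I would substitute $x = Wx'+c$ and then remove $y \equiv c' \pmod W$ by orthogonality in $y$, bounding by the max over $j \in \{1,\ldots,W\}$. The surviving exponential sum is
$$
\sum_{x'\lesssim U/W}\bigg|\sum_{1 \leq y \leq V} e\bigl(\alpha[\psi(x'y + (yc-b)/W) + g(y)] + jy/W\bigr)\bigg|.
$$
As a polynomial in $(x',y)$ of bidegree at most $(k,k)$, the coefficient of $(x')^k y^k$ is $a_1 \in \Z^+$ and the coefficient of $(x')^k y^{k-1}$ is zero: the first because $\psi$ evaluated at $x'y+(\text{poly in }y)$ contributes $a_1(x'y)^k$, and the second because the only monomial containing $(x')^k$ is $(x'y)^k$ whereas $g(y)$ and the linear term $jy/W$ carry no $x'$. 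Hence the hypotheses of Lemma \ref{sumii1a} are met with $a_{11}=a_1$ and $a_{12}=0$, yielding the analogous saving on the large range.

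The main obstacle is the arithmetic bookkeeping: choosing $U_*$ and verifying that the two range estimates combine to produce the stated $(\log(UV))^{-B/(16kK^2)}$ saving with the right dependence on the parameters $W, a_1 \leq (\log(UV))^A$, provided $B$ is sufficiently large compared to $A$. A secondary nuisance is the possibly degenerate $x$ in the small range where $a_1 x^k + g_1 W^k$ vanishes and the inner polynomial in $y'$ drops degree; this occurs at most once and contributes $\ll V$, which is negligible against the claimed bound. Modulo these routine technicalities, the whole argument is formally identical to that of Lemma \ref{sumii2w}, with the extra polynomial $g(y)$ entering only through its leading coefficient, already neutralised by the integrality hypothesis.
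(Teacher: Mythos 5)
Your plan is sound and, in substance, is exactly what the paper does: remove one congruence by substitution and the other by orthogonality, use Lemma \ref{sumii1a} when the outer range is long (with precisely your verification that $a_{11}=a_1$ and $a_{12}=0$), and use Lemma \ref{sumii1b} when it is short, the hypothesis that the $x^k$-coefficient of $g(Wx)$ is an integer serving only to make the $y^k$-coefficient $a_1x^k+g_kW^k$ an integral polynomial in $x$. The one organisational difference is the decomposition: because both ranges here are boxes rather than the hyperbolic region of Lemma \ref{sumii2w}, the paper does not split the $x$-range at a $q$-dependent threshold at all; it simply distinguishes $U\geq(\log V)^{B/(2k)}$ (apply Lemma \ref{sumii1a} to the whole sum) from $U\leq(\log V)^{B/(2k)}$ (apply Lemma \ref{sumii1b} to the whole sum). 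Your split at $U_*=\min\{q^{1/(2k^2)},(\psi(V)/q)^{1/(2k^2)},U\}$ also closes, but with a quantitative loss: on the long range you can only guarantee $U>U_*\gg(\log V)^{(B-O(A))/(2k^2)}$, so the $U^{-1/K^2}$ and $W/U$ terms of Lemma \ref{sumii1a} yield a saving of order $(\log V)^{-cB/(k^2K^2)}$, i.e.\ the exponent of Lemma \ref{sumii2w} rather than the stated $B/(16kK^2)$; this is harmless for the appendix theorem after renaming constants, but to obtain the lemma as stated one should threshold at $(\log V)^{B/(2k)}$ as the paper does. Finally, your attention to a possible zero of $a_1x^k+g_kW^k$ (and to a negative sign, fixed by conjugation) addresses hypotheses of Lemma \ref{sumii1b} that the paper invokes silently, though the dismissal ``contributes $O(V)$'' is only negligible when the outer range is long, where Lemma \ref{sumii1a} would be used anyway; and note that in the short-range case the bound of Lemma \ref{sumii1b} carries the factor $a_*^{1/2K}$ with $a_*\geq|g_kW^k|$, which neither you nor the paper controls from the stated hypotheses --- in every actual application either $g=0$ or the outer range exceeds $V^{2/5}$, so only the Lemma \ref{sumii1a} regime, which sees only $a_{11}=a_1$, is exercised.
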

\begin{proof}
Suppose that $U\geq (\log V)^{\frac{B}{2k}}$. Then by Lemma
\ref{sumii1a},
\begin{align*}
&\sum_{\substack{1\leq x\leq U\\
x\equiv c\pmod{W}}}\bigg|\sum_{\substack{1\leq y\leq V\\y\equiv
c'\pmod{W}}}e(\alpha(\psi((xy-b)/W)+g(y)))\bigg|\\
=&\sum_{\substack{1\leq x\leq
(U-c)/W+1}}\frac{1}{W}\bigg|\sum_{j=1}^W\sum_{\substack{1\leq
y\leq V}}e(\alpha(\psi(xy-(W-c)y/W-b/W)+g(y))+j(y-c')/W)\bigg|\\
\leq&\frac{1}{W}\sum_{j=1}^W\sum_{\substack{1\leq x\leq
(U-c)/W+1}}\bigg|\sum_{\substack{1\leq y\leq V}}e(\alpha
(\psi(xy-(W-c)y/W-b/W)+g(y))+j(y-c')/W)\bigg|\\
\ll&
UV\bigg(U^{-\frac{1}{K^2}}W^{\frac{1}{K^2}}+V^{-\frac{1}{K^2}}+a_{11}^{\frac{1}{4K^2}}(\log(a_{11}qUV))^{\frac{3k^2-2k+1}{4K^2}}\bigg(\frac{1}{q}+\frac{W}{U}+\frac{qW^k}{a_{11}U^{k}V^k}\bigg)^{\frac{1}{4K^2}}\bigg)\\
\ll&_{A,B}UV(\log(UV))^{\frac{3k^2-2k+1+(k+1)A}{4K^2}-\frac{B}{8kK^2}}.
\end{align*}
Also, if $U\leq (\log V)^{\frac{B}{2k}}$, then by Lemma
\ref{sumii1b},
\begin{align*}
&\sum_{\substack{1\leq x\leq U\\
x\equiv c\pmod{W}}}\bigg|\sum_{\substack{1\leq y\leq V\\y\equiv
c'\pmod{W}}}e(\alpha(\psi((xy-b)/W)+g(y)))\bigg|\\
\leq&\sum_{\substack{1\leq x\leq U}}\bigg|\sum_{\substack{1\leq
y\leq (V-c')/W+1}}e(\alpha (\psi(xy-(W-c')x/W-b/W)+g(Wy-W+c')))\bigg|\\
\ll&
UV\bigg(V^{-\frac{1}{K}}W^{\frac{1}{K}}+a_{*}^\frac{1}{2K}U^\frac{k}{2K}(\log(a_*qU^kV))^{\frac{(k-1)^2}{2K}}\bigg(\frac{1}{q}+\frac{W}{V}+\frac{qW^k}{V^k}\bigg)^{\frac{1}{2K}}\bigg)\\
\ll&_{A,B}UV(\log(UV))^{\frac{(k-1)^2+(k+2)A}{2K}-\frac{B}{4K}}.
\end{align*}
\end{proof}

\begin{Thm} Let $\psi(x)=a_1x^k+a_2x^{k-1}\cdots+a_kx$ be a
polynomial with integral coefficients and $a_1\in\Z^+$. Let
$A\geq1$ and $B>64k^2(k^2+K^2)A$. Suppose that $1\leq W,
a_1\leq(\log N)^A$. Then we have
\begin{align}
\sum_{\substack{1\leq x\leq N\\ Wx+b\, {\rm is\, prime}}}\log
(Wx+b)e(\alpha \psi(x))\ll_{A,B} N(\log N)^{-\frac{B}{64k^2K^2}}
\end{align}
provided that $|\alpha-a/q|\leq q^{-2}$ with $(\log N)^{B+1}\leq
q\leq \psi(N)(\log N)^{-B-1}$ and $(a,q)=1$.
\end{Thm}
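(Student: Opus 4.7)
The plan is to apply Vaughan's identity to the restricted von Mangoldt function and reduce the problem to bounding Type I and Type II bilinear sums, which will then be handled by Lemmas \ref{sumii1w} and \ref{sumii2w} respectively.

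First I would rewrite the sum in terms of primes in an arithmetic progression. Setting $y = Wx+b$ and noting that prime powers contribute only $O(\sqrt{WN+b}\log N)$, it suffices to estimate
$$
S := \sum_{\substack{1\leq y\leq WN+b\\ y\equiv b\pmod{W}}}\Lambda(y)\,e\!\left(\alpha\psi\!\left(\tfrac{y-b}{W}\right)\right),
$$
where $\Lambda$ is the ordinary von Mangoldt function. Next, I would invoke Vaughan's identity with parameters $U=V=N^{1/3}$: for $y>UV$,
$$
\Lambda(y)=\sum_{\substack{mn=y\\ m\leq U}}\mu(m)\log n-\sum_{\substack{\ell m n=y\\ \ell\leq U,\,m\leq V}}\mu(\ell)\Lambda(m)+\sum_{\substack{mn=y\\ m>U,\,n>V}}a_m\Lambda(n),
$$
with $|a_m|\leq \log m$. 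The contribution of $y\leq UV$ is trivially $O(UV\log N)=O(N^{2/3}\log N)$. The first two terms produce Type I sums, in which one factor of the product $mn$ is constrained to be $\leq U$ or $\leq UV$; the third term produces a Type II sum in which both factors lie in intermediate ranges.

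The main technical step is to reintroduce the congruence $mn\equiv b\pmod{W}$ after the Vaughan decomposition. For each pair $(c,c')$ with $1\leq c,c'\leq W$ and $cc'\equiv b\pmod{W}$, I split the inner sum according to $m\equiv c\pmod{W}$ and $n\equiv c'\pmod{W}$. There are at most $W\phi(W)\leq (\log N)^{2A}$ such pairs. For each pair, the Type I contribution fits the hypothesis of Lemma \ref{sumii1w} (applied with $U$ replaced by a dyadic piece $\leq UV$ and absorbing the $\log$-weight by partial summation, taking the polynomial $g$ to account for the shift from the change of variables), giving a bound $\ll_{A,B} N(\log N)^{-B/(16kK^2)}$. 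Similarly, a dyadic decomposition of the Type II piece into ranges $m\in[M,2M]$ with $U\leq M\leq V^{-1}(WN+b)$ puts us in the setting of Lemma \ref{sumii2w}, yielding $\ll_{A,B} N(\log N)^{-B/(16k^2K^2)}$ per dyadic block, with $O(\log N)$ blocks.

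The main obstacle will be verifying that for the ranges of $U,V$ produced by Vaughan's identity, the modulus $q$ still lies in the interval $(\log V_*)^B\leq q\leq \psi(V_*)(\log V_*)^{-B}$ demanded by Lemmas \ref{sumii1w} and \ref{sumii2w} when these lemmas are applied to the shifted polynomial $\psi((my-b)/W)+(\text{shift})$ and with dyadic sub-ranges of $m$. This is why the theorem requires the slightly tightened hypothesis $(\log N)^{B+1}\leq q\leq \psi(N)(\log N)^{-B-1}$: the extra $\log N$ factor ensures that after shrinking $N$ to the dyadic range and passing to the inner polynomial in $y$ (whose leading coefficient is bounded by a power of the original $a_1$ and of the dyadic size, hence is still $\leq(\log N)^{O(A)}$), the lemma hypotheses remain valid. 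Summing the Type I and Type II contributions over all $W^2\leq (\log N)^{2A}$ residue classes and over $O(\log N)$ dyadic ranges, and using $B>64k^2(k^2+K^2)A$ to absorb all the $(\log N)^{O(A)}$ losses, we obtain the claimed bound $N(\log N)^{-B/(64k^2K^2)}$.
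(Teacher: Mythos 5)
Your overall skeleton (pass to $\Lambda$ in the progression $y\equiv b\pmod W$, apply Vaughan's identity, split into residue classes $cc'\equiv b\pmod W$, dyadic decomposition, feed the pieces into Lemmas \ref{sumii1w} and \ref{sumii2w}) is the same as the paper's, but your treatment of the Type II piece has a genuine gap. Lemma \ref{sumii2w} bounds sums of the shape $\sum_{x\leq U}\bigl|\sum_{y\leq V/x}e(\alpha\psi((xy-b)/W))\bigr|$, i.e.\ the \emph{inner} variable carries no arithmetic weight; that is exactly why it is the right tool for the Type I sums $S_1,S_2$, where the weight on the long variable is $1$ or $\log u$ and can be removed by partial summation (the paper uses $\log u=\int_1^u dt/t$). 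In the Type II sum both variables carry non-smooth weights ($a_m$ of divisor type and $\Lambda(n)$), so after a dyadic decomposition in $m$ you cannot simply take absolute values of the inner sum and quote Lemma \ref{sumii2w}: whichever variable you put inside still carries $\Lambda$ or the M\"obius-type coefficient, which the lemma does not accommodate. The missing idea is the standard bilinear step: Cauchy--Schwarz in the divisor-weighted variable $u$ (using $|\tau_u|\leq d(u)$ and the mean square of $d(u)$), which replaces the $\Lambda$-weighted inner sum by $\sum_{x,y}\Lambda(x)\Lambda(y)\sum_u e\bigl(\alpha(\psi((xu-b)/W)-\psi((yu-b)/W))\bigr)$; for fixed $y$ this is a bilinear sum in $(x,u)$ whose phase is $\psi((xu-b)/W)+g(u)$ with $g(u)=-\psi((yu-b)/W)$, and it is precisely to absorb this extra polynomial $g$ that Lemma \ref{sumii1w} was stated with the auxiliary $g$ (your suggestion that $g$ merely accounts for a change-of-variables shift in the Type I sums misses its actual purpose). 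Without this step your claimed per-block Type II bound $N(\log N)^{-B/(16k^2K^2)}$ is unsubstantiated; the Cauchy--Schwarz step is also what halves the exponent and produces the final saving $B/(64k^2K^2)$ (the paper gets $S_3\ll V(\log V)^{4-B/(32kK^2)}$), whereas your accounting would give a stronger bound than the theorem states, which signals the misassignment of the lemmas.

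Secondary points: the paper takes the Vaughan parameter $X=V^{2/5}$ rather than $N^{1/3}$, which is harmless, and your remark about the range of $q$ surviving the passage from $N$ to $V=WN+b$ (the reason for the $\pm1$ in the exponent of $\log N$) is correct and matches the paper. But the Type II argument must be repaired along the lines above before the proof is complete.
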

\begin{proof}
For a proposition $P$, define $\1_P=1$ or $0$ according to whether
$P$ holds. Let $F(x)=e(\alpha \psi((x-b)/W))\1_{x\equiv b\pmod{W}}$.
Let $V=WN+b$ and $X=V^{2/5}$. Clearly
$$
(\log V)^{B}\leq (\log N)^{B+1}\leq q\leq\psi(N)(\log
N)^{-B-1}\leq \psi(V)(\log V)^{-B}.
$$
By Vaughan's identity we have,
$$
\sum_{X<x\leq V}\Lambda(x)F(x)=S_1-S_2-S_3,
$$
where
$$
S_1=\sum_{1\leq d\leq X}\mu(d)\sum_{1\leq z\leq V/d}\sum_{x\leq
V/dz}\Lambda(x)F(xdz),
$$
$$
S_2=\sum_{1\leq d\leq X}\mu(d)\sum_{1\leq z\leq
V/d}\sum_{x\leq\min\{X, V/dz\}}\Lambda(x)F(xdz),
$$
and
$$
S_3=\sum_{X<u\leq V}\sum_{\substack{1\leq d\leq X\\ d\mid
u}}\mu(d)\sum_{X<x\leq V/u}\Lambda(x)F(xu).
$$
In fact, letting $\tau_u=\sum_{1\leq d\mid u, d\leq X}\mu(d)$, we
have
$$
\sum_{1\leq u\leq V}\tau_u\sum_{X<x\leq
V/u}\Lambda(x)F(xu)=\sum_{X<u\leq V}\tau_u\sum_{X<x\leq
V/u}\Lambda(x)F(xu)+\sum_{X<x\leq V}\Lambda(x)F(x),
$$
since $\tau_1=1$ and $\tau_u=0$ for $1<u\leq X$. On the other hand,
\begin{align*}
\sum_{1\leq u\leq V}\tau_u\sum_{X<x\leq V/u}\Lambda(x)F(xu)=&
\sum_{1\leq u\leq V}\sum_{d\mid u, 1\leq d\leq X}\mu(d)\sum_{X<x\leq
V/u}\Lambda(x)F(xu)\\
=&\sum_{1\leq d\leq X}\mu(d)\sum_{1\leq z\leq V/d}\sum_{X<x\leq
V/dz}\Lambda(x)F(xdz).
\end{align*}

First, we compute
\begin{align*}
|S_1|=&\bigg|\sum_{d\leq X}\mu(d)\sum_{xz\leq V/d}\Lambda(x)e(\alpha
\psi((dxz-b)/W))\1_{dxz\equiv
b\pmod{W}}\bigg|\\
=&\bigg|\sum_{1\leq d\leq X}\mu(d)\sum_{1\leq u\leq V/d}e(\alpha
\psi((du-b)/W))\1_{du\equiv
b\pmod{W}}\sum_{x\mid u}\Lambda(x)\bigg|\\
\leq&\sum_{1\leq d\leq X}\bigg|\sum_{1\leq u\leq V/d}e(\alpha
\psi((du-b)/W))\1_{du\equiv
b\pmod{W}}\log u\bigg|\\
\leq&\sum_{1\leq d\leq X}\bigg|\sum_{1\leq u\leq V/d}e(\alpha
\psi((du-b)/W))\1_{du\equiv
b\pmod{W}}\int_1^u\frac{dt}{t}\bigg|\\
\leq&\int_1^{V}\sum_{1\leq d\leq X}\bigg|\sum_{t\leq u\leq
V/d}e(\alpha \psi((du-b)/W))\1_{du\equiv
b\pmod{W}}\bigg|\frac{dt}{t}.
\end{align*}
Clearly
\begin{align*}
&\sum_{1\leq d\leq X}\bigg|\sum_{t\leq u\leq V/d}e(\alpha
\psi((du-b)/W))\1_{du\equiv b\pmod{W}}\bigg|\\
=&\sum_{1\leq d\leq \min\{X,V/t\}}\bigg|\sum_{t\leq u\leq
V/d}e(\alpha
\psi((du-b)/W))\1_{du\equiv b\pmod{W}}\bigg|\\
=&\sum_{\substack{1\leq c\leq W\\(c,W)=1}}\sum_{\substack{1\leq d\leq\min\{X,V/t\}\\
d\equiv c\pmod{W}}}\bigg|\sum_{\substack{t\leq u\leq V/d\\ uc\equiv
b\pmod{W}}}e(\alpha \psi((du-b)/W))\bigg|.
\end{align*}
So it suffices to estimate
\begin{align*}
\sum_{\substack{1\leq d\leq\min\{X,V/t\}\\
d\equiv c\pmod{W}}}\bigg|\sum_{\substack{1\leq u<t\\ uc\equiv
b\pmod{W}}}e(\alpha \psi((du-b)/W))\bigg|
\end{align*}
and
\begin{align*}
\sum_{\substack{1\leq d\leq\min\{X,V/t\}\\
d\equiv c\pmod{W}}}\bigg|\sum_{\substack{1\leq u\leq V/d\\ uc\equiv
b\pmod{W}}}e(\alpha \psi((du-b)/W))\bigg|.
\end{align*}
Applying Lemma \ref{sumii2w},
\begin{align*}
\sum_{\substack{1\leq d\leq\min\{X,V/t\}\\
d\equiv c\pmod{W}}}\bigg|\sum_{\substack{1\leq u\leq V/d\\ uc\equiv
b\pmod{W}}}e(\alpha \psi((du-b)/W))\bigg|\ll V(\log
V)^{-\frac{B}{16k^2K^2}}.
\end{align*}
Since
\begin{align*}
\sum_{\substack{1\leq d\leq\min\{X,V/t\}\\
d\equiv c\pmod{W}}}\bigg|\sum_{\substack{1\leq u<t\\ uc\equiv
b\pmod{W}}}e(\alpha \psi((du-b)/W))\bigg|\leq Xt,
\end{align*}
we may assume that $t\geq V^{\frac{1}{2}}$. Then by Lemma
\ref{sumii1w},
\begin{align*}
&\sum_{\substack{1\leq d\leq \min\{X,V/t\}\\
d\equiv c\pmod{W}}}\bigg|\sum_{\substack{1\leq u<t\\ uc\equiv
b\pmod{W}}}e(\alpha \psi((du-b)/W))\bigg|\ll V(\log
t)^{-\frac{B}{16kK^2}}.
\end{align*}

Similarly,
\begin{align*}
|S_2|=&\sum_{1\leq d\leq X}\mu(d)\sum_{1\leq z\leq V/d}\sum_{1\leq
x\leq\min\{X, V/dz\}}\Lambda(x)e(\alpha
\psi((dxz-b)/W))\1_{dxz\equiv
b\pmod{W}}\\
\leq&\sum_{1\leq d\leq X}\sum_{1\leq x\leq
X}\Lambda(x)\bigg|\sum_{1\leq z\leq V/dx}e(\alpha
\psi((dxz-b)/W))\1_{dxz\equiv
b\pmod{W}}\bigg|\\
\leq&\sum_{1\leq y\leq X^2}\sum_{\substack{1\leq x\leq X\\
x\mid y}}\Lambda(x)\bigg|\sum_{1\leq z\leq V/y}e(\alpha
\psi((yz-b)/W))\1_{yz\equiv
b\pmod{W}}\bigg|\\
\leq&\log V\sum_{1\leq y\leq X^2}\bigg|\sum_{1\leq z\leq
V/y}e(\alpha
\psi((yz-b)/W))\1_{yz\equiv b\pmod{W}}\bigg|\\
\ll&V(\log V)^{1-\frac{B}{16k^2K^2}},
\end{align*}
where Lemma \ref{sumii2w} is used in the last step.

Finally, let
$$
S_3(U_1,U_2)=\sum_{U_{1}\leq u\leq U_2}\tau_u\sum_{X<x\leq
V/u}\Lambda(x)e(\alpha \psi((xu-b)/W))\1_{xu\equiv b\pmod{W}}
$$
with $X\leq U_1\leq U_2\leq 2U_1$, where
$\tau_u=\sum_{\substack{1\leq d\leq X, d\mid u}}\mu(d)$. Clearly
$S_3(U_1,U_2)\not=0$ only if $X<V/U_1$. Since $|\tau_u|\leq d(u)$,
we have
\begin{align*}
&|S_3(U_1,U_2)|\\\leq&\bigg(\sum_{U_{1}\leq u\leq
U_2}|\tau_u|^2\bigg)^\frac{1}{2}\bigg(\sum_{u=U_{1}}^{U_2}\bigg|\sum_{X<x\leq
V/u}\Lambda(x)e(\alpha \psi((xu-b)/W))\1_{xu\equiv
b\pmod{W}}\bigg|^2\bigg)^\frac{1}{2}
\\
\leq&U_2^\frac{1}{2}(\log
U_2)^\frac{3}{2}\bigg(\sum_{U_{1}\leq u\leq U_2}\sum_{\substack{X<x,y\leq V/u\\
xu\equiv b\pmod{W}\\ yu\equiv
b\pmod{W}}}\Lambda(x)\Lambda(y)e(\alpha
(\psi((xu-b)/W)-\psi((yu-b)/W)))\bigg)^\frac{1}{2}.
\end{align*}
Now for $1\leq c,c'\leq W$ with $cc'\equiv b\pmod{W}$,
\begin{align*}
&\sum_{\substack{U_1\leq u\leq U_2\\ u\equiv c\pmod{W}}}\sum_{\substack{X<x,y\leq V/u\\ x\equiv c'\pmod{W}\\ y\equiv c'\pmod{W}}}\Lambda(x)\Lambda(y)e(\alpha (\psi((xu-b)/W)-\psi((yu-b)/W)))\\
=&\sum_{\substack{X<x,y\leq V/U_1\\ x\equiv c'\pmod{W}\\ y\equiv c'\pmod{W}}}\Lambda(x)\Lambda(y)\sum_{\substack{U_{1}\leq u\leq \min\{U_2,V/x, V/y\}\\ u\equiv c\pmod{W}}}e(\alpha (\psi((xu-b)/W)-\psi((yu-b)/W)))\\
=&2\sum_{\substack{X<x<y\leq V/U_1\\ x\equiv c'\pmod{W}\\ y\equiv c'\pmod{W}}}\Lambda(x)\Lambda(y)\sum_{\substack{U_{1}\leq u\leq \min\{U_2,V/y\}\\ u\equiv c\pmod{W}}}e(\alpha (\psi((xu-b)/W)-\psi((yu-b)/W)))\\
&+O((V/U_1-X)/W).
\end{align*}
And
\begin{align*}
&\sum_{\substack{X<x<y\leq V/U_1\\ x\equiv c'\pmod{W}\\ y\equiv c'\pmod{W}}}\Lambda(x)\Lambda(y)\sum_{\substack{U_{1}\leq u\leq \min\{U_2,V/y\}\\ u\equiv c\pmod{W}}}e(\alpha (\psi((xu-b)/W)-\psi((yu-b)/W)))\\
=&\sum_{\substack{V/U_2<y\leq V/U_1\\ y\equiv c'\pmod{W}}}\Lambda(y)\sum_{\substack{X<x<y\\ x\equiv c'\pmod{W}}}\Lambda(x)\sum_{\substack{U_{1}\leq u\leq V/y\\ u\equiv c\pmod{W}}}e(\alpha (\psi((xu-b)/W)-\psi((yu-b)/W)))\\
&+\sum_{\substack{X<y\leq V/U_2\\ y\equiv
c'\pmod{W}}}\Lambda(y)\sum_{\substack{X<x<y\\ x\equiv
c'\pmod{W}}}\Lambda(x)\sum_{\substack{U_{1}\leq u\leq U_2\\
u\equiv c\pmod{W}}}e(\alpha (\psi((xu-b)/W)-\psi((yu-b)/W))).
\end{align*}
If $V/U_2<y$, then by Lemma \ref{sumii1w},
\begin{align*}
&\sum_{\substack{X<x<y\\ x\equiv c'\pmod{W}}}\bigg|\sum_{\substack{U_{1}\leq u\leq V/y\\ u\equiv c\pmod{W}}}e(\alpha (\psi((xu-b)/W)-\psi((yu-b)/W)))\bigg|\\
\ll&(X+y)U_1(\log U_1)^{-\frac{B}{16kK^2}}+(X+y)(V/y)(\log
(V/y))^{-\frac{B}{16kK^2}}\\
\ll&(X+y)(U_1+V/y)(\log V)^{-\frac{B}{16kK^2}}.
\end{align*}
Also, if $y\leq V/U_2$, then by Lemma \ref{sumii1w},
\begin{align*}
&\sum_{\substack{X<x<y\\ x\equiv c'\pmod{W}}}\sum_{\substack{U_{1}\leq u\leq U_2\\ u\equiv c\pmod{W}}}e(\alpha (\psi((xu-b)/W)-\psi((yu-b)/W)))\\
\ll& (X+y)U_1(\log U_1)^{-\frac{B}{16kK^2}}+(X+y)U_2(\log
U_2)^{-\frac{B}{16kK^2}}\\
\ll&(X+y)(U_1+U_2)(\log V)^{-\frac{B}{16kK^2}}.
\end{align*}
Hence
\begin{align*}
&\sum_{\substack{X<x<y\leq V/U_1\\ x\equiv c'\pmod{W}\\ y\equiv c'\pmod{W}}}\Lambda(x)\Lambda(y)\sum_{\substack{U_{1}\leq u\leq \min\{U_2,V/y\}\\ u\equiv c\pmod{W}}}e(\alpha (\psi((xu-b)/W)-\psi((yu-b)/W)))\\
\ll&(\log V)^{2-\frac{B}{16kK^2}}\bigg(\sum_{\substack{V/U_2<y\leq
V/U_1\\ y\equiv c'\pmod{W}}}(X+y)(U_1+V/y) +\sum_{\substack{X<y\leq
V/U_2\\ y\equiv c'\pmod{W}}}(X+y)(U_1+U_2)\bigg)\\
\ll&V^2U_1^{-1}(\log V)^{2-\frac{B}{16kK^2}}.
\end{align*}
It follows that
$$
S_3(U_1,U_2)\ll U_2^\frac{1}{2}(\log
U_2)^\frac{3}{2}(V^2U_1^{-1}(\log
V)^{2-\frac{B}{16kK^2}})^{\frac{1}{2}}\ll V(\log
V)^{3-\frac{B}{32kK^2}},
$$
and
$$
S_3\ll V(\log V)^{4-\frac{B}{32kK^2}}.
$$
All are done.
\end{proof}

\begin{Ack}
We thank Professor Emmanuel Lesigne for his useful comments on our paper.
\end{Ack}

\end{document}